\newcommand{\rToDots}{\rTo[l>=3.3em]~{\raisebox{0.1pt}{$\,\dotsb\,$}}}
\let\@@seccntformat\@seccntformat
\renewcommand*{\@seccntformat}[1]{%
  \expandafter\ifx\csname @seccntformat@#1\endcsname\relax
    \expandafter\@@seccntformat
  \else
    \expandafter
      \csname @seccntformat@#1\expandafter\endcsname
  \fi
    {#1}%
}
\newcommand*{\@seccntformat@subsection}[1]{%
  \textbf{\csname the#1\endcsname.}
}
\let\@paragraph\paragraph
\renewcommand*{\paragraph}[1]{%
	\vspace{0.3\baselineskip}%
	\@paragraph{\textit{#1}}%
}
\newcommand{\subsecref}[1]{\S\ref{#1}}
\newtheorem{theorem}[equation]{Theorem}
\newtheorem*{theorem*}{Theorem}
\newtheorem{lemma}[equation]{Lemma}
\newtheorem*{lemma*}{Lemma}
\newtheorem{corollary}[equation]{Corollary}
\newtheorem{proposition}[equation]{Proposition}
\newtheorem*{proposition*}{Proposition}
\theoremstyle{definition}
\newtheorem*{definition*}{Definition}
\theoremstyle{remark}
\newtheorem{example}[equation]{Example}
\newtheorem*{example*}{Example}
\newtheorem*{note}{Note}
\theoremstyle{plain}
\let\old@caption\caption
\renewcommand*{\caption}[1]{%
	\setcounter{figure}{\value{equation}}%
	\stepcounter{equation}%
	\old@caption{#1}\relax%
}
\newcommand{\MHM}{\operatorname{MHM}}
\newcommand{\pt}{\mathit{pt}}
\newcommand{\Dmod}{\mathcal{D}}
\newcommand{\Mmod}{\mathcal{M}}
\newcommand{\Nmod}{\mathcal{N}}
\newcommand{\derR}{\mathbb{R}}
\newcommand{\decal}[1]{\lbrack #1 \rbrack}
\newcommand{\ltriangle}[4][]%
{\begin{diagram}[#1]%
	{#2} &\rTo& {#3} &\rTo& {#4} &\rTo& {#2 \decal{1}}%
\end{diagram}}
\newcommand{\shH}{\shf{H}}
\newcommand{\contr}[2]{#1 \lrcorner\, #2}
\newcommand{\pr}{\mathit{pr}}
\newcommand{\abs}[1]{\lvert #1 \rvert}
\newcommand{\eps}{\varepsilon}
\newcommand{\tensor}{\otimes}
\newcommand{\shHom}{\mathscr{H}\hspace{-2.7pt}\mathit{om}}
\newcommand{\shExt}{\mathscr{E}\hspace{-1.5pt}\mathit{xt}}
\newcommand{\Hom}{\operatorname{Hom}}
\newcommand{\Ext}{\operatorname{Ext}}
\newcommand{\ZZ}{\mathbb{Z}}
\newcommand{\QQ}{\mathbb{Q}}
\newcommand{\CC}{\mathbb{C}}
\newcommand{\HH}{\mathbb{H}}
\newcommand{\PPn}[1]{\mathbb{P}^{#1}}
\newcommand{\menge}[2]{\bigl\{ \thinspace #1 \thinspace\thinspace \big\vert%
\thinspace\thinspace #2 \thinspace \bigr\}}
\DeclareMathOperator{\Res}{Res}
\DeclareMathOperator{\Proj}{Proj}
\DeclareMathOperator{\depth}{depth}
\DeclareMathOperator{\rat}{rat}
\DeclareMathOperator{\Sym}{Sym}
\DeclareMathOperator{\Gr}{Gr}
\DeclareMathOperator{\DR}{DR}
\newcommand{\Ch}{\mathit{SS}}
\newcommand{\define}[1]{\emph{#1}}
\newcommand{\lie}[2]{\lbrack #1, #2 \rbrack}
\newcommand{\shf}[1]{\mathscr{#1}}
\newcommand{\OX}{\shf{O}_X}
\newcommand{\OmX}[1]{\Omega_X^{#1}}
\newcommand{\prim}{_\mathit{prm}}
\newcommand{\restr}[1]{\big\vert_{#1}}
\newcommand{\argbl}{-}
\def\overbar#1#2#3{{%
	\setbox0=\hbox{\displaystyle{#1}}%
	\dimen0=\wd0
	\advance\dimen0 by -#2 
	\vbox {\nointerlineskip \moveright #3 \vbox{\hrule height 0.3pt width \dimen0}%
		\nointerlineskip \vskip 1.5pt \box0}%
}}
\newcommand{\into}{\hookrightarrow}
\newcommand{\gl}{\mathfrak{gl}}
\newcommand{\HC}{H_{\CC}}
\newcommand{\shHO}{\shf{H}_{\shf{O}}}
\newcommand{\famX}{\mathscr{X}}
\newcommand{\pil}{\pi_{\ast}}
\newcommand{\Hvan}[1]{H_{\mathrm{van}}^{#1}}
\newcommand{\jl}{j_{\ast}}
\newcommand{\jlreg}{j_{\ast}^{\mathrm{reg}}}
\newcommand{\jlsl}{j_{!\ast}}
\newcommand{\ju}{j^{\ast}}
\newcommand{\fl}{f_{\ast}}
\newcommand{\iu}{i^{\ast}}
\newcommand{\ius}{i^{!}}
\newcommand{\il}{i_{\ast}}
\newcommand{\shF}{\shf{F}}
\newcommand{\shG}{\shf{G}}
\newcommand{\shE}{\shf{E}}
\newcommand{\shO}{\shf{O}}
\newcommand{\disttrianglelinelong}[8]{%
\begin{diagram}[midshaft,#1]%
	{#2} &\rTo^{#6}& {#3} &\rTo^{#7}& {#4} &\rTo^{#8}& {#5}%
\end{diagram}%
}
\DeclareFontFamily{OT1}{pzc}{}
\DeclareFontShape{OT1}{pzc}{m}{it}{<-> s * [1.05] pzcmi8t}{}
\DeclareMathAlphabet{\mathpzc}{OT1}{pzc}{m}{it}
\newcommand{\famY}{\mathscr{Y}}
\newcommand{\dualX}{X^{\vee}}
\newcommand{\dualV}{V^{\vee}}
\newcommand{\Psm}{P^{\mathit{sm}}}
\newcommand{\OPsm}{\shf{O}_{\Psm}}
\newcommand{\famXsm}{\famX^{\mathit{sm}}}
\newcommand{\pism}{\pi^{\mathit{sm}}}
\newcommand{\pisml}{\pi_{\ast}^{\mathit{sm}}}
\newcommand{\OPX}{\shf{O}_{P \times X}}
\newcommand{\OmPXP}[1]{\Omega_{P \times X / P}^{#1}}
\newcommand{\OmP}[1]{\Omega_P^{#1}}
\newcommand{\OfamX}{\shf{O}_{\famX}}
\newcommand{\OfamY}{\shf{O}_{\famY}}
\newcommand{\OP}{\shf{O}_P}
\newcommand{\OQ}{\shf{O}_Q}
\newcommand{\OPQ}{\shf{O}_{P \times Q}}
\newcommand{\shC}{\shf{C}}
\newcommand{\dPXP}{d_{P \times X / P}}
\newcommand{\DP}{\mathcal{D}_P}
\newcommand{\Vvan}{V_{\mathit{ev}}^{n-1}}
\newcommand{\Mvan}{\Mmod_{\mathit{ev}}}
\renewcommand{\pr}{\mathit{pr}}
\newcommand{\prP}{\pr_{\!P}}
\newcommand{\prX}{\pr_{\!X}}
\newcommand{\prPl}{\pr_{\!P\ast}}
\newcommand{\prPu}{\pr_{\!P}^{\ast}}
\newcommand{\prQu}{\pr_{\!Q}^{\ast}}
\newcommand{\prXu}{\pr_{\!X}^{\ast}}
\newcommand{\ql}{q_{\ast}}
\newcommand{\gu}{g^{\ast}}
\newcommand{\phiu}{\phi^{\ast}}
\newcommand{\psiu}{\psi^{\ast}}
\newcommand{\aPu}{a_P^{\ast}}
\newcommand{\aPl}{a_{P\ast}}
\newcommand{\aXu}{a_X^{\ast}}
\newcommand{\afamXl}{a_{\famX\ast}}
\newcommand{\aXl}{a_{X\ast}}
\newcommand{\QXH}{\QQ_X^H}
\newcommand{\QPXH}{\QQ_{P \times X}^H}
\newcommand{\QfamXH}{\QQ_{\famX}^H}
\newcommand{\famU}{\mathscr{U}}
\newcommand{\QUH}{\QQ_{\famU}^H}
\newcommand{\QPH}{\QQ_P^H}
\newcommand{\dfamX}{d_{\famX}}
\newcommand{\dPX}{d_{P \times X}}
\newcommand{\dfamU}{d_{\famU}}
\newcommand{\Pervalg}[1]{\operatorname{Perv}_{\mathit{alg}}(#1)}
\newcommand{\Rmod}{\mathcal{R}}
\newcommand{\Emod}{\mathcal{E}}
\renewcommand{\restr}[1]{\vert_{#1}}
\newcommand{\ResfamXP}{\Res_{\famX / P}}
\newcommand{\NXQ}{N_{X \mid Q}}
\newcommand{\primeE}{{'\!E}}
\newcommand{\pprimeE}{{''\!E}}
\newcommand{\sfamX}{s_{\famX}}
\newcommand{\alfamX}{\alpha_{\famX}}
\newcommand{\cohshf}{\mathcal{H}}
\newcommand{\PP}{\mathbb{P}}
\newcommand{\Hprim}[1]{H_0^{#1}}
\newcommand{\HXprim}{\Hprim{n}(X)}
\renewcommand{\gl}{g_{\ast}}
\newcommand{\Rvan}[1]{R_{\mathit{ev}}^{#1}}
\newcommand{\famS}{\mathfrak{S}}
\newcommand{\famI}{\mathscr{Q}}
\renewcommand{\Hvan}[1]{H_{\mathit{ev}}^{#1}}
\newcounter{thmA}
\newtheorem{theoremA}[thmA]{Theorem}
\begin{document}

\title[Residues and D-modules]{Residues and filtered D-modules}
\author[C.~Schnell]{Christian Schnell}
\address{Department of Mathematics, Statistics \& Computer Science \\
University of Illinois at Chicago \\
851 South Morgan Street \\
Chicago, IL 60607}
\email{cschnell@math.uic.edu}
\subjclass[2000]{14D07; 32C38, 14F10}
\keywords{Residues, Hypersurfaces, Filtered D-module, Mixed Hodge module}
\begin{abstract}
For an embedding of sufficiently high degree of a smooth projective variety $X$ into
projective space, we use residues to define a filtered holonomic $\Dmod$-module
$(\Mmod, F)$ on the dual projective space. This gives a concrete description of the
intermediate extension to a Hodge module on $P$ of the variation of Hodge
structure on the middle-dimensional cohomology of the hyperplane sections of $X$. We
also establish many results about the sheaves $F_k \Mmod$, such as positivity,
vanishing theorems, or reflexivity.
\end{abstract}
\maketitle

\section{Overview}

\subsection{Introduction}
\label{subsec:intro}

Let $X$ be a complex projective manifold of dimension $n$, and let $D \subseteq X$ be
a smooth and very ample hypersurface. The cohomology of the complement $X \setminus
D$ can be represented by meromorphic forms on $X$ with poles along $D$. It also
carries a mixed Hodge structure, and Griffiths \cite{RatInt} (for $X = \PPn{n}$) and
Green \cite{GreenPeriod} (in general) have shown that the Hodge filtration is
basically the filtration by pole order, provided that the line bundle $\OX(D)$ is
sufficiently ample. One consequence is a formula for the Hodge filtration on the
vanishing cohomology $\Hvan{n-1}(D, \QQ)$ of the hypersurface (see
\subsecref{subsec:summary} below).

The purpose of this article is to generalize the above picture to the case when $D$
is allowed to have singularities. Fix a very ample line bundle $\OX(1)$ on $X$, and
consider all hypersurfaces, smooth or singular, that belong to the linear system $P =
\abs{\OX(1)}$. Using residues, we construct a $\Dmod$-module $\Mmod$ on the
projective space $P$, together with a good filtration $F = F_{\bullet} \Mmod$ by
$\OP$-coherent subsheaves. Provided that $\OX(1)$ is sufficiently ample, we then show
that $(\Mmod, F)$ is regular and holonomic, and in fact underlies a polarized Hodge
module on $P$. We also show that the coherent sheaves $F_k \Mmod$ have many
remarkable properties. The $\Dmod$-module $\Mmod$ is intimately related to the
geometry of the incidence variety $\famX \subseteq P \times X$, and in the process of
describing $\Mmod$, we recover the results about $\famX$ obtained by Brosnan, Fang,
Nie, and Pearlstein in \cite{BFNP}. We also strengthen one of their theorems, by
showing that, in the Decomposition Theorem, no terms with support in the dual variety
appear once the vanishing cohomology of the hypersurfaces is nontrivial.

These results seem interesting for two reasons. One is theoretical: Computing minimal
extensions of holonomic $\Dmod$-modules is a difficult problem, except in
the case of a divisor with normal crossing singularities. In the situation above, we
have a flat vector bundle with fibers $\Hvan{n-1}(D, \CC)$ on the complement of the
discriminant locus $P \setminus \dualX$, underlying the evident variation of Hodge
structure. Our $\Dmod$-module $\Mmod$ is in fact the minimal extension of that
flat vector bundle, and so we have a very explicit description of the minimal
extension via residues, despite the fact that the divisor $\dualX$ is typically very
singular.

The other reason is practical: The Hodge filtration on a minimal extension can be
used very nicely in the construction of certain analytic spaces, the main example
being the construction of complex-analytic N\'eron models for families of
intermediate Jacobians \cite{neron}. Good properties of the sheaves in the filtration
(such as positivity or reflexivity) translate directly into good properties of the
resulting analytic spaces (such as holomorphic convexity or control over
singularities). In a forthcoming paper, we will use the results about the sheaves $F_k
\Mmod$ obtained here to give a precise description of the N\'eron model for the
family of intermediate Jacobians $J^{n-1}(D)$, for $n = \dim X$ even. Other
applications are to the study of Noether-Lefschetz loci in $P$ (for $n$ odd).

\subsection{Summary of results}
\label{subsec:summary}

Before outlining the results of this paper, we shall briefly review the theorem of
Griffiths and Green. Keeping the notation of \subsecref{subsec:intro}, the Lefschetz
theorems show that the cohomology
groups of $D$ can be obtained from those of $X$ with the exception of $H^{n-1}(D, \QQ)$;
the latter is the direct sum of $H^{n-1}(X, \QQ)$ and the so-called \define{vanishing
cohomology} 
\[
	\Hvan{n-1}(D, \QQ) = \ker \bigl( \il \colon H^{n-1}(D, \QQ) \to H^{n+1}(X, \QQ) \bigr),
\]
the kernel of the Gysin map for the inclusion $i \colon D \into X$. The vanishing
cohomology of $D$ is related to the cohomology of the open complement $X \setminus D$
through the exact sequence
\begin{equation} \label{eq:X-D}
\begin{diagram}
	0 &\rTo& H_0^n(X, \QQ) &\rTo& H^n(X \setminus D, \QQ) 
		&\rTo& \Hvan{n-1}(D, \QQ) &\rTo& 0
\end{diagram}
\end{equation}
in which $H_0^n(X, \QQ) = \ker \bigl( \iu \circ \il \colon H^n(X, \QQ) \to H^{n+2}(X, \QQ)
\bigr)$ denotes the \define{primitive cohomology} of $X$.  The map $H^n(X \setminus
D, \QQ) \to \Hvan{n-1}(D, \QQ)$ is the well-known \define{residue map}, whose
analytic description is as follows: by a theorem of Grothendieck's, the cohomology
groups of $X \setminus D$ are computed by holomorphic forms on $X \setminus D$ with
at worst poles along the divisor $D$, and the residue map takes such a form to its
residue along $D$.

All three groups in \eqref{eq:X-D} carry mixed Hodge structures, and through
the work of Griffiths \cite{RatInt} (for the case $X = \PPn{n}$) and Green
\cite{GreenPeriod}, it is known how to compute their Hodge filtrations:
they are essentially given by pole order. That is to say, if we let $\OmX{p}(\ast D)$
denote the sheaf of meromorphic $p$-forms on $X$ with poles along $D$, and
$\OmX{p}(k D)$ the subsheaf of those with a pole of order at most $k$, then
\[
	H^n(X \setminus D, \CC) \simeq \frac{H^0 \bigl( X, \OmX{n}(\ast D) \bigr)}%
		{d H^0 \bigl( X, \OmX{n-1}(\ast D) \bigr)},
\]
and the Hodge filtration is (for $k \geq 1$) given by
\[
	F^{n+1-k} H^n(X \setminus D, \CC) \simeq \frac{H^0 \bigl( X, \OmX{n}(k D) \bigr)}%
		{d H^0 \bigl( X, \OmX{n-1}( (k-1)D ) \bigr)}.
\]
In both formulas, $d$ stands for the exterior derivative on forms. The second one
holds as long as the line bundle $\OX(D)$ is sufficiently ample; more precisely,
one needs that $H^q \bigl( X, \OmX{p}(k) \bigr) = 0$ for $p \geq 0$ and $q, k > 0$.
From \eqref{eq:X-D}, one obtains the following formula for the Hodge filtration on the
vanishing cohomology of $D$:
\begin{equation} \label{eq:Hodge-D}
	F^{n-k} \Hvan{n-1}(D, \CC) \simeq \frac{H^0 \bigl( X, \OmX{n}(k D) \bigr)}%
		{F^{n+1-k} H_0^n(X, \CC) + d H^0 \bigl( X, \OmX{n-1}( (k-1)D ) \bigr)}.
\end{equation}

The isomorphism in \eqref{eq:Hodge-D} is compatible with moving $D$ in the linear
system $P$, and provides a very convenient description for the resulting variation of
Hodge structure. Consider the incidence variety $\famX \subseteq P \times X$, which
is a projective bundle over $X$ and hence nonsingular. From now on, we denote the
individual hypersurfaces by $\famX_p = \pi^{-1}(p)$, where $\pi \colon \famX \to
P$ is the projection to the first factor. Let $\pism \colon \famXsm \to \Psm$ be the
restriction of $\pi$ to the open subset $\Psm$ over which $\pi$ is smooth, and let
$j \colon \Psm \into P$ denote the inclusion map.

On $\Psm$, we have the variation of Hodge structure 
\[
	\shH = \Rvan{n-1} \pisml \QQ = 
		\ker \bigl( R^{n-1} \pisml \QQ \to H^{n+1}(X, \QQ) \bigr),
\]
whose fibers are the weight $n-1$ rational Hodge structures on $\Hvan{n-1}(\famX_p,
\QQ)$. Let $\shHO$ be the underlying holomorphic vector bundle, and let $F^k \shHO$
be the holomorphic subbundles $F^k \shHO$ given by the Hodge filtration. The
Gauss-Manin connection $\nabla$ makes $\shHO$ into a flat vector bundle, and the
Hodge bundles satisfy Griffiths' transversality condition $\nabla(F^k \shHO)
\subseteq \Omega_{\Psm}^1 \tensor F^{k-1} \shHO$. Now the isomorphism
in \eqref{eq:Hodge-D} means that $F^{n-k} \shHO$ is a quotient of the vector bundle
\[
	\pisml \Omega_{\Psm \times X / \Psm}^n(k \famXsm) \simeq
		H^0 \bigl( X, \OmX{n}(k) \bigr) \tensor \OPsm(k),
\]
in a way that is compatible with differentation.

Trying to extend this description to singular hypersurfaces naturally leads to
filtered $\Dmod$-modules.  To avoid the problems caused by the singularities, we use
the fact that the incidence variety $\famX$ is nonsingular, and define coherent
subsheaves $F_k \Mmod$ of the quasi-coherent sheaf $\jl \shHO$ by the following rule:
for any open set $U \subseteq P$, a section $s \in \Gamma( U \cap \Psm, \shHO)$ shall
belong to $\Gamma(U, F_k \Mmod)$ if and only if there exists a
meromorphic $n$-form $\omega \in \Gamma \bigl( U \times X, \Omega_{P \times X}^n(k
\famX) \bigr)$ such that
\[
	s(p) = \Res_{\famX_p} \bigl( \omega \restr{X \setminus \famX_p} \bigr) 
\]
for any $p \in U \cap \Psm$. The result of Griffiths and Green shows that, once
$\OX(1)$ is sufficiently ample, the coherent sheaf $F_k \Mmod$ is a natural extension
of the vector bundle $F^{n-k} \shHO$ from $\Psm$ to $P$. Now let $\Mmod$ be the union
of the $F_k \Mmod$ inside $\jl \shHO$. It is then not hard to show that $(\Mmod, F)$
is a coherent filtered $\Dmod_P$-module, whose restriction to $\Psm$ is the flat
vector bundle $(\shHO, \nabla)$ with its Hodge filtration (see
\subsecref{subsec:Mmod}). The following theorem summarizes our main results.

\begin{theoremA} \label{thm:A}
If the vanishing cohomology of the hypersurfaces is nontrivial, meaning that $\shHO
\neq 0$, then the filtered $\Dmod$-module $(\Mmod, F)$ has the following properties:
\begin{enumerate}
\item $\Mmod$ is regular and holonomic, and is the minimal extension of the flat
vector bundle $(\shHO, \nabla)$ from $\Psm$ to $P$.
\item $(\Mmod, F_{\bullet+n})$ underlies the polarized Hodge module obtained by
intermediate extension of the variation of Hodge structure $\Rvan{n-1} \pisml \QQ$,
provided that $H^q \bigl( X, \OmX{p}(k) \bigr) = 0$ for every $p \geq 0$ and every $q, k > 0$.
\item By the Riemann-Hilbert correspondence, the de Rham complex $\DR_P(\Mmod)$ is
constructible, and its cohomology sheaves satisfy
\[
	R^{n - 1 + k} \pil \CC \simeq \cohshf^{k-d} \DR_P \bigl( \Mmod \bigr) 
		\oplus H^{n - 1 - k}(X, \CC) 
\]
for all $0 \leq k \leq d$.
\end{enumerate}
\end{theoremA}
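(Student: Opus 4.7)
\medskip

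The plan is to realize $(\Mmod,F)$ as a direct summand of the direct image, as a filtered $\Dmod$-module, of a mixed Hodge module attached to the incidence divisor $\famX \subseteq P \times X$, and then to derive each property from Saito's Decomposition Theorem and the Griffiths--Green pole-order theorem. Since $\prP \colon P \times X \to P$ is a smooth projective morphism, the localization $\OPX(\ast \famX)$ underlies a mixed Hodge module on $P \times X$, and the residue construction defining $\Mmod$ identifies it with the cokernel, inside $\prPl \OmPX{n}(\ast \famX)$ modulo exact forms, of the contribution from $\prPl \OPX$. The Hodge filtration by pole order on the left-hand side (in the sense of Griffiths--Green) produces precisely the coherent sheaves $F_k \Mmod$.

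For part (1), regularity and holonomicity then follow from Saito's theorem that direct images of regular holonomic mixed Hodge modules under proper morphisms are again of the same type. To see that $\Mmod$ is the minimal extension of $(\shHO, \nabla)$ from $\Psm$, one must rule out sub- and quotient $\Dmod$-modules supported on $\dualX$. For the quotient direction, a local section of $\Mmod$ supported on $\dualX$ would be represented by a meromorphic $n$-form on $U \times X$ with poles along $\famX$ whose residue vanishes on $U \cap \Psm$; the exact sequence
\[
	0 \to \OmPX{n} \to \OmPX{n}(\ast \famX) \to \OmPX{n}(\ast \famX)/\OmPX{n} \to 0
\]
together with the Griffiths--Green vanishing shows such a form is exact modulo the primitive cohomology of $X$, giving the zero section of $\Mmod$. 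The sub direction is dual and uses the natural map from the minimal extension into $\Mmod$.

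For part (2), the Hodge filtration on the intermediate extension $\jlsl \shH$ is characterized uniquely by agreement with the VHS on $\Psm$ together with strictness and compatibility with the $V$-filtration along $\dualX$. The Griffiths--Green theorem, under the stated vanishing $H^q(X, \OmX{p}(k)) = 0$ for $q, k > 0$, gives on $\Psm$ the identification $F_{k+n}\Mmod \restr{\Psm} \simeq F^{-k}\shHO$; the same vanishing forces degeneration of the spectral sequence computing $\prPpl \OPX(\ast\famX)$ from the pole-order filtration of the relative de Rham complex $\OmPX{\bullet}(\ast\famX)$, so that the global pole-order filtration really does coincide with Saito's filtration on the intermediate extension.

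For part (3), I would apply the Decomposition Theorem to $\pi_+ \QfamXH$. Via the second projection $\famX \to X$, the incidence variety is a projective bundle, so the decomposition contains, besides the intermediate extension corresponding to $\Mmod$, only constant summands whose multiplicities are dictated by the projective bundle formula and yield exactly the groups $H^{n-1-k}(X,\CC)$. Part (1) rules out summands supported on $\dualX$ once $\shHO \neq 0$ (this is the strengthening of the Brosnan--Fang--Nie--Pearlstein result advertised in the introduction), since such a summand would give a sub- or quotient $\Dmod$-module of $\Mmod$ supported on $\dualX$. Reading off cohomology in each degree then yields the stated isomorphism, with the shift by $d = \dim P$ accounting for the perversity convention in $\DR_P$.

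The main obstacle is part (2): matching the elementary pole-order filtration with Saito's abstractly defined Hodge filtration on the intermediate extension. The pole-order filtration is manifestly $\shO_P$-coherent and compatible with the residue description, but Saito's filtration is characterized through the $V$-filtration along $\dualX$, a highly singular divisor. The point at which the vanishing hypothesis becomes essential is precisely here, since it is what allows the relative de Rham spectral sequence with the pole-order filtration to degenerate and thereby to compute the correct Hodge filtration on the pushforward.
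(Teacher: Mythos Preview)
Your overall strategy---realize $\Mmod$ inside the pushforward of a mixed Hodge module and invoke the Decomposition Theorem---matches the paper's, and your treatment of part~(2) is essentially right in spirit: the vanishing hypothesis makes the pole-order filtration on the relative de~Rham complex compute Saito's Hodge filtration on the pushforward (this is the paper's Proposition~\ref{prop:Dmod-Nj} and Corollary~\ref{cor:Dmod2}). But there is a genuine gap in how you handle parts~(1) and~(3), and it is the same gap in both places.

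For~(1), your ``quotient direction'' argument is actually a submodule argument: a section of $\Mmod$ supported on $\dualX$ is a torsion section, and those vanish automatically because $\Mmod$ is by construction a subsheaf of $\jl\shHO$. The substantive direction---ruling out \emph{quotient} $\Dmod$-modules of $\Mmod$ supported on $\dualX$, i.e.\ showing $\Mmod\subseteq\Mvan$---is not addressed. For~(3), you claim that minimality of $\Mmod$ forces the summands $E_{0,j}$ with $j>0$ in the decomposition of $H^0\pil\QfamXH\decal{\dfamX}$ to vanish, on the grounds that they ``would give a sub- or quotient $\Dmod$-module of $\Mmod$ supported on $\dualX$.'' But the $E_{0,j}$ are direct summands of the larger module $H^0\pil\QfamXH\decal{\dfamX}$, not of $\Mvan$; knowing $\Mvan$ is simple says nothing about whether other summands appear alongside it. In the paper the logic actually runs the other way: one first proves $R=\bigoplus_{j>0}E_{0,j}=0$, and minimality of $\Mmod$ is a \emph{consequence}.

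The missing ingredient is the computation of the characteristic variety. The paper shows (Lemma~\ref{lem:charN0}, via the Koszul resolution of Lemma~\ref{lem:Koszul}) that the characteristic sheaf of $\Nmod^0$ is a \emph{line bundle} on $\famY\subseteq\PP(\Theta_P)$, so the characteristic variety of $\Mvan\oplus\Rmod$ has multiplicity exactly one along the cone over $\famY$. Additivity of multiplicity then forces one of $\Mvan$, $\Rmod$ to have characteristic variety contained in the zero section; if it were $\Mvan$, the underlying local system would be trivial, contradicting irreducibility of the monodromy on vanishing cohomology. Hence $\Rmod=0$, which yields~(1), (2), and~(3) simultaneously. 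Without this multiplicity-one computation (or some substitute), your argument does not close.
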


\begin{note}
The conditions in the theorem are always satisfied if $\OX(1)$ is a sufficiently high
power of a very ample line bundle.
\end{note}

In the process of proving Theorem~\ref{thm:A}, we compute the characteristic variety
of the $\Dmod$-module $\Mmod$ inside the cotangent bundle of $P$. Before we state the
result, recall that $\Gr^F \Dmod_P \simeq \Sym \Theta_P$ is the symmetric algebra on
the tangent bundle of $P$; consequently, the graded module $\Gr^F \Mmod$ defines a
coherent sheaf on $T_P^{\ast}$, whose support is by definition the characteristic
variety. Because of the grading, we also get a coherent sheaf on the projectivized
cotangent bundle $\PP(\Theta_P)$. One can show (see \subsecref{subsec:universal})
that the incidence variety $\famX$ embeds into $\PP(\Theta_P)$. Denote
by $\famY \subseteq \famX$ the union of all the singular points in the hypersurfaces
$\famX_p$; then the second projection $\psi \colon \famY \to X$ is again a projective
bundle, and so $\famY$ is also a smooth subvariety of $\PP(\Theta_P)$. The following
theorem relates it to the characteristic variety of $(\Mmod, F)$.

\begin{theoremA} \label{thm:B}
If the vanishing cohomology of the hypersurfaces is nontrivial, then
the coherent sheaf on $\PP(\Theta_P)$ defined by the graded $\Sym \Theta_P$-module
$\Gr^F \Mmod$ is precisely $\psiu \omega_X$.
\end{theoremA}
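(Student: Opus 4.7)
The plan is to compute $\Gr^F \Mmod$ directly from its pole-order description, and then to match the resulting coherent sheaf on $\PP(\Theta_P)$ with $\psi^{\ast} \omega_X$ by a local calculation along the singular locus $\famY$.

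First, I would establish a surjective presentation of each filtered piece. By definition, $F_k \Mmod$ is the image of the residue map applied to $\prPl \Omega_{P \times X}^n(k \famX)$. Under the ampleness hypothesis on $\shO_X(1)$, which makes the relevant higher direct images on $P \times X$ vanish, this gives an honest surjection of coherent sheaves on $P$. Passing to the pole-order quotient
\[
0 \to \Omega_{P \times X}^n((k-1)\famX) \to \Omega_{P \times X}^n(k \famX) \to \Omega_{P \times X}^n \tensor \shO_{P \times X}(k\famX) \tensor \shO_{\famX} \to 0
\]
and pushing forward, I obtain a surjection
\[
\prPl \Bigl( \Omega_{P \times X}^n \tensor \shO_{P \times X}(k \famX) \tensor \shO_{\famX} \Bigr) \twoheadrightarrow \Gr_k^F \Mmod,
\]
so $\Gr^F \Mmod$ lives naturally on $\famX$ as a quotient of these pole-part sheaves.

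Second, I would identify the kernel. The relations come from the relative exterior derivative: a form of the shape $d_{P \times X/P}\eta$ with $\eta$ of lower pole order has zero residue on every smooth fiber $\famX_p$, and therefore contributes nothing modulo $F_{k-1} \Mmod$. Working modulo lower pole order on $\famX$, these relations assemble into a Koszul-type complex on $\famX$ whose cokernel is the graded sheaf we are after. A local computation now shows that this cokernel is supported on $\famY$: near $(p_0, x_0) \in \famX \setminus \famY$ the residue is represented by $\omega/d_X f$ for a local equation $f$ of $\famX$, and since $d_X f$ is nonvanishing there one can kill the pole by subtracting exact forms; on $\famY$, by contrast, $d_X f = 0$ by definition and the pole cannot be trivialized.

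Third, I would carry out the identification with $\psi^{\ast} \omega_X$. Near a point of $\famY$, the residue of an $n$-form in $\Omega_{P \times X}^n$ naturally lives in the relative canonical sheaf in the $X$-direction, which after restriction to the fiber gives $\omega_X$; this accounts for the $\omega_X$ factor. The remaining twists by $\shO_{P \times X}(k\famX)$ encode the grading and are absorbed, under the embedding $\famX \hookrightarrow \PP(\Theta_P)$ of \subsecref{subsec:universal}, by the tautological line bundle $\shO_{\PP(\Theta_P)}(1)$; once the Sym $\Theta_P$-module structure is matched with the one induced by the embedding, the pole-order parameter becomes the grading on $\PP(\Theta_P)$, and nothing remains on the line-bundle side except $\psi^{\ast} \omega_X$.

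The main obstacle will be precisely this bookkeeping in the last step. One has to track the powers of $\shO_X(1)$ hidden in $\shO_{P \times X}(k \famX)$, the relative cotangent sheaves $\Omega_{\famX/P}^{\bullet}$, and the Euler-type sequence
\[
0 \to \shO_P \to H^0(X, \shO_X(1)) \tensor \shO_P(1) \to \Theta_P \to 0
\]
implicit in the embedding $\famX \hookrightarrow \PP(\Theta_P)$, and verify that all these contributions cancel to leave exactly $\psi^{\ast} \omega_X$. Once a local adapted frame at a point of $\famY$ is chosen, the cancellation should be visible term by term, but the sign and twist conventions are unforgiving.
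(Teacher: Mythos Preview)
Your Koszul-type calculation is close to what the paper does, but there is a genuine gap in the second step, and it is exactly the place where the hypothesis ``vanishing cohomology nontrivial'' is needed---a hypothesis your proposal never invokes.

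You assert that ``these relations assemble into a Koszul-type complex on $\famX$ whose cokernel is the graded sheaf we are after.'' But the relations $d_{P\times X/P}\eta$ are, by construction, the relations defining $\Nmod^0 = \Nmod / d\bigl(\prPl \Omega_{P\times X/P}^{n-1}(\ast\famX)\bigr)$, not $\Mmod$. What your Koszul argument actually computes is $\shC(\Nmod^0, F)$ (this is Lemma~\ref{lem:charN0} in the paper, and your local argument on $\famX \setminus \famY$ versus $\famY$ is the content of Lemma~\ref{lem:Koszul}). The residue map factors as $\Nmod \to \Nmod^0 \to \Mmod$, and the second arrow has a kernel: the primitive cohomology $\Hprim{n}(X,\CC)\tensor\OP$, which is locally free and harmless for the characteristic sheaf, \emph{plus} a possible error term $\Rmod$ supported on the dual variety (see \eqref{eq:kzero}). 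Your argument gives only a surjection $\psiu\omega_X \twoheadrightarrow \shC(\Mmod,F)$, not the claimed isomorphism, because you have not ruled out extra relations coming from $\Rmod$.

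The paper closes this gap in Proposition~\ref{prop:R}: having computed that $\shC(\Nmod^0,F)$ is a line bundle on the irreducible variety $\famY$, one uses additivity of multiplicity along the cone over $\famY$ to conclude that exactly one of $\Mvan$, $\Rmod$ has positive multiplicity there. If $\Rmod\neq 0$, then $\Mvan$ would have characteristic variety equal to the zero section, hence be locally free, hence be a trivial flat bundle on $P$; but the monodromy on vanishing cohomology is irreducible with no invariants, so $\Mvan$ would be zero---contradicting the nontriviality hypothesis. This is where the hypothesis enters, and it cannot be bypassed by a purely local calculation near $\famY$. Your ``the pole cannot be trivialized'' argument shows the Koszul cokernel is nonzero on $\famY$, but says nothing about whether $\Gr^F\Mmod$ has further relations beyond the Koszul ones.
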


We note that this result fits into the context of Fourier-Mukai transforms for filtered
$\Dmod$-modules \cite{Laumon}, in this case between $X$ and on $P$. That is to say,
if we transform $\OX$ (with the trivial filtration $\Gr_0^F \OX = \OX$) by the kernel
$\OPX(\ast \famX)$, then $(\Mmod, F)$ is a direct summand in one of the cohomology
modules of the resulting complex. More generally, because of the embedding $i \colon
X \into Q$, this may be seen as a special case of the Fourier-Mukai transform between filtered
$\Dmod$-modules on the projective space $Q$ and its dual $P$. (It appears that this
observation is originally due to Beilinson.)

This computation also leads to the following result about the terms that appear in
the Decomposition Theorem for the direct image of $\QQ \decal{\dfamX}$ under the morphism
$\pi \colon \famX \to P$.

\begin{theoremA} \label{thm:C}
If the vanishing cohomology of the hypersurfaces is nontrivial, then each 
piece in the decomposition of the polarized Hodge modules $H^k \pil \QfamXH
\decal{\dfamX}$ has strict support equal to all of $P$.
\end{theoremA}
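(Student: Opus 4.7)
The plan is to apply Saito's Decomposition Theorem to the projective morphism $\pi \colon \famX \to P$ and then invoke the Riemann--Hilbert computation of Theorem~A(3) to rule out summands whose strict support is a proper subvariety. By the Decomposition Theorem,
\[
    \pil \QfamXH \decal{\dfamX} \simeq \bigoplus_k \bigl( \cohshf^k \pil \QfamXH \decal{\dfamX} \bigr) \decal{-k}
\]
in $D^b \MHM(P)$, and each cohomology module decomposes further by strict support as
\[
    \cohshf^k \pil \QfamXH \decal{\dfamX} = M^k \oplus \bigoplus_{Z \subsetneq P} N^k_Z,
\]
where $M^k$ has strict support equal to all of $P$ and each $N^k_Z$ has strict support a proper closed irreducible subvariety $Z$ (necessarily contained in $\dualX$, since $\pi$ is smooth over $\Psm$). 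The goal is to show $N^k_Z = 0$ for every $k$ and $Z$.

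First I would identify the strict-support-$P$ summand $M^k$ by examining its restriction to $\Psm$, where $\pism$ is smooth and projective. The variation of Hodge structure underlying $\cohshf^k \pil \QfamXH \decal{\dfamX}$ on $\Psm$ is $R^{n-1+k}\pisml\QQ$, which by the Lefschetz hyperplane theorem applied fiberwise equals the constant variation $H^{n-1+k}(X, \QQ)$ for $k \neq 0$ and $H^{n-1}(X, \QQ) \oplus \shH$ for $k = 0$. Consequently $M^k$ is the intermediate extension of these variations: for $k \neq 0$ it is a constant Hodge module built from $H^{n-1+k}(X, \QQ)$, and for $k = 0$ it is the sum of a constant piece from $H^{n-1}(X, \QQ)$ together with the intermediate extension of $\shH$, whose underlying $\Dmod$-module is $\Mmod$ by Theorem~A(1).

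Next, I would apply the de Rham functor to the decomposition and compare the resulting cohomology sheaves of $\pil\CC$ with Theorem~A(3). Since de Rham commutes with intermediate extension, the contribution of the $\shH$ part of $M^0$ to $R^{n-1+k}\pil\CC$ is precisely $\cohshf^{k-d}\DR_P(\Mmod)$, while the constant Hodge modules coming from $H^{\bullet}(X, \QQ)$ together contribute the summand $H^{n-1-k}(X, \CC)$ (after invoking Poincar\'e duality on $X$ to align the indices). This already exhausts the right-hand side of Theorem~A(3). Any nonzero $N^k_Z$ would produce an additional direct summand in $R^{n-1+k}\pil\CC$ whose stalk at a generic point of $Z$ is nonzero, contradicting the isomorphism of Theorem~A(3). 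Therefore $N^k_Z = 0$ for all $k$ and all $Z \subsetneq P$.

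The main obstacle is the careful bookkeeping of shifts, Tate twists, and Poincar\'e duality needed to match the aggregate contribution of the constant Hodge modules from $H^{\bullet}(X, \QQ)$ with the summand $H^{n-1-k}(X, \CC)$ appearing in Theorem~A(3), and to handle the indices $k$ outside $[0,d]$ via relative hard Lefschetz. Once these identifications are made precise, the tightness of Theorem~A(3) forces the vanishing of every $N^k_Z$ supported on a proper subvariety of $P$.
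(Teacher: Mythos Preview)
Your argument is circular. You invoke Theorem~A(1) to identify the underlying $\Dmod$-module of the intermediate extension of $\shH$ with $\Mmod$, and then Theorem~A(3) to match the cohomology sheaves of $R\pil\CC_{\famX}$ with $\cohshf^{k-d}\DR_P(\Mmod)\oplus H^{n-1-k}(X,\CC)$. But in the paper's logical structure both of these statements are \emph{consequences} of Theorem~C, not inputs to it. Concretely, Corollary~\ref{cor:Dmod2} (which gives $\Mvan=\Mmod$, i.e.\ Theorem~A(1)) explicitly uses $\Rmod=0$ from Proposition~\ref{prop:R}, and the proof of Lemma~\ref{lem:cohdeRham} (which is Theorem~A(3)) says outright: ``The assumption on the vanishing cohomology implies that $E_{i,j}=0$ for all $j\neq 0$; see Proposition~\ref{prop:R}.'' Proposition~\ref{prop:R} \emph{is} Theorem~C. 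So the ``tightness of Theorem~A(3)'' that you want to exploit is only available once Theorem~C has already been established.

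The paper's actual argument is quite different and does not rely on any part of Theorem~A. It works directly with the filtered $\Dmod$-module $\Nmod^0$ underlying $H^0\ql\QUH\decal{\dfamU}$ and computes its characteristic sheaf on $\PP(\Theta_P)$ via the Koszul resolution of Lemma~\ref{lem:Koszul}, obtaining $\shC(\Nmod^0,F)\simeq\psiu\OmX{n}\otimes\OfamY(n+1)$ (Lemma~\ref{lem:charN0}). Hence the characteristic variety of $\Nmod^0$ has only two irreducible components: the zero section, and the cone over $\famY$ with multiplicity one. Any $E_{0,j}$ with $j\geq 2$ would contribute a component of a different dimension, so those vanish immediately. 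For $j=1$, additivity of multiplicity along the cone over $\famY$ forces one of $\Mvan$ or $\Rmod$ to have multiplicity zero there; if $\Rmod\neq 0$ then $\Mvan$ would be locally free on all of $P$, hence trivial by simple connectedness, contradicting the irreducibility of the monodromy on the nontrivial vanishing cohomology. The cases $k\neq 0$ are handled separately in \subsecref{subsec:pieces} using Lemma~\ref{lem:QUH-van} and the Hard Lefschetz Theorem, without any appeal to Theorem~A.
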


As explained above, we view the coherent sheaves $F_k \Mmod$ (especially in the range $1 \leq
k \leq n$) as being natural extensions of the Hodge bundles $F^{n-k} \shHO$. The
concrete description by residues allows us to show that they have many nice
properties.

\begin{theoremA} \label{thm:D}
Suppose that we have $H^q \bigl( X, \OmX{p}(k) \bigr) = 0$ for $k,q > 0$ and $p \geq
0$. Then the coherent sheaves $F_k \Mmod$ have the following properties:
\begin{enumerate}[label=(\arabic{*}), ref=(\arabic{*})]
\item $F_1 \Mmod$ is an ample vector bundle, and is a direct summand of $\pil
\OfamX(K_{\famX/P})$.
\item $H^i \bigl( P, \Omega_P^p \tensor F_k \Mmod \bigr) = 0$ for every $i \geq
\max(p,0)$.
\item For $0 \leq i \leq \dim P - 1$, we have
\[
	\Ext_p^i \bigl( F_k \Mmod, \OP \bigr) \simeq
		\left( \frac{F^{n+1-k} H^{n+1-i}(X, \CC)}{F^{n-k} H^{n-1-i}(X, \CC)} \right)^{\vee}.
\]
\item Given $m \geq 1$, the sheaves $\Gr_k^F \Mmod$ and $F_k \Mmod$ in the range $1
\leq k \leq n$ satisfy a variant of Serre's condition $S_m$, provided that $\OX(1)$ is a
sufficiently high power of a very ample line bundle. In particular, they are
reflexive. \label{en:thmC-4}
\end{enumerate}
\end{theoremA}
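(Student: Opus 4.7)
The plan is to deduce all four parts from an explicit presentation of $F_k \Mmod$ by free $\OP$-modules. Under the hypothesis $H^q(X, \Omega_X^p(k)) = 0$ for $k, q > 0$ and $p \geq 0$, the Griffiths--Green formula recalled in \subsecref{subsec:summary} globalizes to the exact sequence
\[
H^0 \bigl( X, \Omega_X^{n-1}(k-1) \bigr) \otimes \OP(k-1) \longrightarrow H^0 \bigl( X, \Omega_X^n(k) \bigr) \otimes \OP(k) \longrightarrow F_k \Mmod \to 0,
\]
with the first map induced by the exterior derivative along $X$, and this extends on the left to a Koszul-type resolution of $F_k \Mmod$ obtained as $\pi_\ast$ of the pole-order filtration on the relative meromorphic de Rham complex of $P \times X$ over $P$.

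For (1), the Poincar\'e residue sequence
\[
0 \to \Omega_{P \times X}^n \to \Omega_{P \times X}^n(\log \famX) \to \Omega_{\famX}^{n-1} \to 0,
\]
pushed forward by $\pi$, identifies the image of $\pi_\ast \Omega_{P \times X/P}^n(\famX)$ in $\jl \shHO$ with the vanishing summand of $\pi_\ast \omega_{\famX/P}$; using the projective bundle structure $q \colon \famX \to X$ and the kernel bundle $V = \ker \bigl( H^0(X, \OX(1)) \otimes \OX \to \OX(1) \bigr)$, this summand is exhibited as an extension of ample bundles, yielding ampleness. For (2), one tensors the resolution with $\Omega_P^p$; each term becomes of the form $H^0 \bigl( X, \Omega_X^{n-j}(k-j) \bigr) \otimes \Omega_P^p(k-j)$, whose higher cohomology vanishes in the relevant range by Bott vanishing on the projective space $P$, and a hypercohomology spectral sequence yields the claimed vanishing.

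For (3), I would dualize the resolution and apply Grothendieck duality on $P$; Serre duality on $X$ rewrites each term as a piece of the cohomology of $X$, and a careful bookkeeping with the Hodge filtration together with Poincar\'e duality identifies the cohomology of the dualized complex with the successive quotients $F^{n+1-k} H^{n+1-i}(X, \CC) / F^{n-k} H^{n-1-i}(X, \CC)$. Finally, (4) is deduced from a local version of (3) applied to $F_k \Mmod$ and its Serre twists: once $\OX(1)$ is a high enough power of a very ample bundle, Kodaira--Akizuki--Nakano vanishing on $X$ annihilates the Hodge pieces appearing in the Ext formula in a range depending on $m$, and this vanishing implements the variant of Serre's condition $S_m$, with reflexivity as the special case $m = 2$.

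The main obstacle is the construction of the global Koszul-type resolution and the verification that $\pi_\ast$ of the relative meromorphic de Rham complex on $P \times X$ correctly recovers the residue description of $F_k \Mmod$; once this is in hand, (1)--(3) are cohomological or duality computations, and (4) follows by combining (3) with strong vanishing on $X$.
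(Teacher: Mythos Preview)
Your approach to parts (1)--(3) is close to the paper's, with one correction: the complex you build is \emph{not} a resolution of $F_k\Mmod$. The pushforward of the pole-order-filtered relative de Rham complex,
\[
\Emod_k^{\bullet} = \prPl F_{k-n-1} \DR_{P\times X/P}\bigl(\OPX(\ast\famX)\bigr),
\]
has cohomology in degree $j$ equal to $F_{k-n-1}\Nmod^j$, where $\Nmod^j$ underlies $H^j\ql\QUH\decal{\dfamU}$. For $j<0$ these are the nonzero trivial bundles $\bigl(F^{n+1-k}H^{n+j}(X,\CC)/F^{n-k}H^{n+j-2}(X,\CC)\bigr)\otimes\OP$, and in degree $0$ one gets $F_{k-n-1}\Nmod^0$, which surjects onto $F_k\Mmod$ with kernel $F^{n+1-k}\Hprim{n}(X,\CC)\otimes\OP$. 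The paper uses essentially your strategy for (2) and (3)---Bott vanishing plus two hypercohomology spectral sequences, then Serre duality on the dualized complex---but runs it for $\Nmod^0$ first, keeping track of the trivial cohomology in negative degrees via the second spectral sequence, and only afterwards passes to $\Mmod$ via the short exact sequence above. For (1) the paper argues more directly: the two-term presentation is pointwise injective on the left (since $H^0(X,\OmX{n})\hookrightarrow H^0(X,\OmX{n}(\famX_p))$ for every $p$), so $F_1\Mmod$ is locally free and ample as a quotient of $H^0(X,\OmX{n}(1))\otimes\OP(1)$; the relation to $\pil\OfamX(K_{\famX/P})$ then comes from adjunction for $\famX\subset P\times X$ rather than the log Poincar\'e residue.

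There is a genuine gap in your plan for (4). Serre's condition $S_m$ concerns the \emph{local} sheaves $\shExt_{\OP}^i(\Gr_k^F\Mmod,\OP)$, whereas the formula in (3) computes only the \emph{global} groups $\Ext_P^i$; those global groups are fixed Hodge pieces of $H^{\ast}(X,\CC)$ and do not vanish by making $\OX(1)$ more ample, so Kodaira--Akizuki--Nakano on $X$ cannot do the job. The paper's argument is entirely different and rests on two ingredients you do not mention. First, the characteristic sheaf of $(\Mmod,F)$ on $\PP(\Theta_P)$ is computed to be $\psiu\omega_X$, supported on the smooth variety $\famY\subset\famX$ of singular points of the fibers; a duality theorem for filtered Cohen--Macaulay $\Dmod$-modules then identifies $\shExt^{i+1}(\Gr_{n+1-k}^F\Mmod,\OP)$ with $R^i\pil\bigl(\psiu\omega_X\otimes\OfamY(k)\bigr)$. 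Second, one bounds the codimension in $P$ of the locus of hypersurfaces with at least $N$ singular points, or with a singular point of multiplicity $\geq r$: for $\OX(1)$ a high enough power of a very ample bundle these loci have codimension $\geq N$ and $\geq r-1$ respectively, via jet-separation. This geometric codimension estimate for $\pi\colon\famY\to P$, not any vanishing theorem on $X$, is what forces the local Ext sheaves into high codimension and yields $S_m$.
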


The proof of \ref{en:thmC-4}, given in \subsecref{subsec:Serre}, depends on two
things: a general duality result for filtered Cohen-Macaulay $\Dmod$-modules
\cite{mhmduality}, and the fact that the set of hypersurfaces with ``many''
singularities has very high codimension in $P$ if $\OX(1)$ is a sufficiently large
power of a very ample line bundle.

\subsection{Acknowledgements}

This work is part of my Ph.D.~thesis, and it is my pleasure to thank my former
adviser Herb Clemens for his help and his kindness during my time in graduate school.
Needless to say, the idea to study the behavior of the residue map near singular
hypersurfaces is due to him. 

I also acknowledge the influence of Patrick Brosnan and Greg Pearlstein, whose paper
\cite{BFNP} with Hao Fang and Zhaohu Nie appeared at about the time when I was
thinking about residues, and led me to describe the $\Dmod$-module $(\Mmod, F)$ using
the theory of mixed Hodge modules.

I am grateful to Joel Kamnitzer and Sabin Cautis for useful conversations about
Fourier-Mukai transforms for filtered $\Dmod$-modules, and for calling my attention
to Laumon's work \cite{Laumon}.

Lastly, my thanks go to Paul Taylor for providing the package \texttt{diagrams} that
was used to typeset the many commutative diagrams in the paper.

\subsection{Conventions}
\label{subsec:conventions}

In dealing with filtrations, we index increasing filtrations (such as weight
filtrations, or Hodge filtrations on $\Dmod$-modules) by lower indices, and decreasing
filtrations (such as Hodge filtrations on vector spaces, or $V$-filtrations on left
$\Dmod$-modules) by upper indices. We may pass from one to the other by the
convention that $F^{\bullet} = F_{-\bullet}$. To be consistent, shifts in the
filtration thus have different effects in the two cases:
\[
	F \decal{1}^{\bullet} = F^{\bullet+1}, \quad \text{while} \quad
	F \decal{1}_{\bullet} = F_{\bullet-1}.
\]
This convention agrees with the notation used in M.~Saito's papers. 

In this paper, we work with \emph{left} $\Dmod$-modules, and $\Dmod$-module always
means left $\Dmod$-module (in contrast to \cite{SaitoMHM}, where right
$\Dmod$-modules are used).

When $M$ is a mixed Hodge module, the effect of a Tate twist $M(k)$ on the underlying
filtered $\Dmod$-module $(\Mmod, F)$ is as follows: 
\[
	(\Mmod, F)(k) = \bigl( \Mmod, F \decal{k} \bigr) = 
		\bigl( \Mmod, F_{\bullet-k} \bigr).
\]

For a regular holonomic $\Dmod$-module $\Mmod$ that is defined on the complement of a
divisor $D \subseteq X$, we let $\jlreg \Mmod$ be the direct image in the category of
regular holonomic $\Dmod$-modules; its sections have poles of finite order along $D$.

The individual sections of the paper are numbered consecutively, and are referred to
with a paragraph symbol (such as \subsecref{subsec:conventions}).

\section{Residues and filtered D-modules}

Throughout, we let $X$ be a smooth projective variety of dimension $n$, and 
$\OX(1)$ a very ample line bundle, embedding $X$ into the projective space $Q =
\PP(V)$, where $V = H^0 \bigl( X, \OX(1) \bigr)$. We denote by $P$ the dual projective
space, parametrizing hyperplane sections of $X$ for the given embedding, and let
$\pi \colon \famX \to P$ be the universal hypersurface. 

\subsection{Filtered $\Dmod$-modules on projective space}
\label{subsec:dmodule}

Let $V$ be a complex vector space of dimension $d+1$, and set $P = \PP(\dualV)$,
which is a projective space of dimension $d$.
Let $\Dmod_P$ denote the sheaf of differential operators on $P$; it is naturally
filtered, with the subsheaf $F_m \Dmod_P$ consisting of those differential operators
whose order is at most $m$. The associated graded sheaf
\[
	\Gr^F \Dmod_P = \bigoplus_{m=0}^{\infty} F_m \Dmod_P \big/ F_{m-1} \Dmod_P
\]
is commutative, and in fact isomorphic to the symmetric algebra on the tangent bundle
$\Sym \Theta_P$. Recall that a \define{left $\Dmod$-module} on $P$ is a quasi-coherent sheaf $\Mmod$
with a left action by $\Dmod_P$. Moreover, a \define{filtered $\Dmod$-module} is a pair
$(\Mmod, F)$, consisting of left a $\Dmod$-module $\Mmod$ together with an increasing
filtration $F = F_{\bullet} \Mmod$ by $\OP$-coherent subsheaves that is bounded from
below, and satisfies
\begin{equation} \label{eq:filtration}
	F_m \Dmod_P \cdot F_k \Mmod \subseteq F_{m+k} \Mmod
\end{equation}
for all $k,m \in \ZZ$. One says that the filtration $F$ is \define{good} if the
inclusion in \eqref{eq:filtration} is an equality for large values of $k$; this is
equivalent to
\[
	\Gr^F \! \Mmod = \bigoplus_{k \in \ZZ} F_k \Mmod \big/ F_{k-1} \Mmod
\]
being finitely generated over $\Sym \Theta_P$ by \cite{Borel}*{Theorem~4.4}.
Note that $\Mmod$ is coherent as an $\OP$-module if, and only if, it is locally free
of finite rank (and hence a flat vector bundle); see \cite{Borel}*{Proposition~1.7}.

$\Sym \Theta_P$ is naturally the sheaf of functions on the cotangent bundle of
$P$, and so any filtered coherent $\Dmod$-module $(\Mmod, F)$ defines a
coherent sheaf on $T_P^{\ast}$; its support is by definition the \emph{characteristic
variety} $\Ch(\Mmod, F)$. Again, the characteristic variety is contained in the zero
section exactly when $\Mmod$ is locally free of finite rank.

Because of the grading, $\Ch(\Mmod, F)$ is a cone in $T_P^{\ast}$, and 
each component of this cone has dimension at least $d = \dim P$, a fact known as
Bernstein's inequality \cite{Borel}*{Theorem~1.10}.  The filtered $\Dmod$-module
$(\Mmod, F)$ is said to be \define{holonomic} if its characteristic variety is of
pure dimension $d$. For every holonomic $\Dmod$-module $\Mmod$, there is a finite
union of closed subvarieties of $P$, outside of which $\Mmod$ is a flat vector
bundle, namely the image of all those components of $\Ch(\Mmod, F)$
that are not contained in the zero section.  For a holonomic $\Dmod$-module $\Mmod$,
the \define{multiplicity} of the sheaf $\Gr^F \! \Mmod$ along each component of the
characteristic variety is the same for every good filtration $F$. These
multiplicities are additive in short exact sequences of holonomic $\Dmod$-modules,
which implies that the category of holonomic $\Dmod$-modules is Artinian: every
holonomic module admits a finite filtration with simple holonomic subquotients.
This leads to the following notion: given a holonomic $\Dmod$-module $\Nmod$ on $P
\setminus Z$, we say that a holonomic $\Dmod$-module $\Mmod$ is the \define{minimal
extension} of $\Nmod$ if $\Mmod \restr{P \setminus Z} \simeq \Nmod$, and if $\Mmod$
has no nontrivial submodules or quotient modules whose support is contained in $Z$.

Given a filtered coherent $\Dmod$-module $(\Mmod, F)$, the graded $\Sym \Theta_P$-module
$\Gr^F \! \Mmod$ naturally defines a coherent sheaf on the projectivized cotangent
bundle $\PP(\Theta_P) = \Proj(\Sym \Theta_P)$; we shall call it the
\define{characteristic sheaf} of $(\Mmod, F)$, and denote it by $\shC(\Mmod, F)$.
We recover the characteristic variety by taking the cone over the support of
the characteristic sheaf, together with possibly the zero section; in particular, one can
compute the multiplicity along the components of the characteristic variety from
$\shC(\Mmod, F)$.

A second interpretation will be useful in what follows. Let $Q = \PP(V)$, and note
that the two projective spaces $P$ and $Q$ are dual to each other, with points $p \in
P$ corresponding to hyperplanes $H_p \subseteq Q$. The incidence variety
\[
	\famI = \menge{(p,x) \in P \times Q}{x \in H_p}
\]
is a nonsingular hypersurface, with line bundle $\OPQ(\famI) \simeq
\prPu \OP(1) \tensor \prQu \OQ(1)$. It is naturally isomorphic to the projectivized
cotangent bundle $\PP \bigl( \Theta_P \bigr)$; indeed, the exactness of the
Euler sequence
\begin{diagram}
	0 &\rTo& \OP &\rTo& \OP(1) \tensor V &\rTo& \Theta_P &\rTo& 0
\end{diagram}
on the projective space $P$ implies that $\PP \bigl( \Theta_P  \bigr)$ embeds into
$\PP \bigl( \OP(1) \tensor V \bigr) = P
\times Q$, with line bundle $\prPu \OP(1) \tensor \prQu \OQ(1)$, and a moment's thought
shows that the image is precisely $\famI$.

Let $i \colon \famI \into P \times Q$ be the inclusion. The line bundle $\OPQ(\famI)$
is very ample, and embeds $P \times Q$ into a larger projective space; we let
\[
	S = \bigoplus_{k=0}^{\infty} H^0 \bigl( \famI, \iu \OPQ(k \famI) \bigr)
\]
be the homogeneous coordinate ring in this embedding, so that $\PP(\Theta_P) \simeq \Proj
S$. Not surprisingly, we have an isomorphism of graded rings
\[
	S \simeq H^0 \bigl( P, \Sym \Theta_P \bigr);
\]
in fact, after pushing forward the exact sequence 
\begin{diagram}
	0 &\rTo& \OPQ \bigl( (k-1)\famI \bigr) &\rTo& \OPQ(k\famI) &\rTo& \iu \OPQ(k\famI)
		&\rTo& 0
\end{diagram}
to $P$, and using the information coming from the Euler sequence, we find that
\[
	H^0 \bigl( \famI, \iu \OPQ(k\famI) \bigr) \simeq 
		H^0 \left( P, \frac{\OP(k) \tensor \Sym^k V}{\OP(k-1) \tensor \Sym^{k-1} V} \right)
		\simeq H^0 \bigl( P, \Sym^k \Theta_P \bigr)
\]
for all $k \geq 0$. It follows from these observations that the characteristic sheaf
$\Ch(\Mmod, F)$ of a filtered coherent $\Dmod$-module $(\Mmod, F)$ is the coherent
sheaf on $\Proj S$ associated to the graded $S$-module
\begin{equation} \label{eq:charmod}
	C(\Mmod, F) = \bigoplus_{k \in \ZZ} H^0 \bigl( P, F_k \Mmod / F_{k-1} \Mmod \bigr)
		= H^0 \bigl( P, \Gr^F \! \Mmod \bigr).
\end{equation}
We shall refer to it as the \define{characteristic module} of $(\Mmod, F)$; as usual,
the characteristic sheaf is completely determined by the homogeneous components of
$C(\Mmod, F)$ in degrees $k \gg 0$.

\subsection{The universal hyperplane section}
\label{subsec:universal}

Now let $X$ be a smooth projective variety of dimension $n$, and fix a very ample
line bundle $\OX(1)$ on $X$. We let $V = H^0(X, \OX(1))$ be the space of
its global sections, and therefore get a nondegenerate embedding $i \colon X \into Q$ into the projective
space $Q = \PP(V)$. The \define{universal hyperplane section} is the incidence variety
\[
	\famX = \menge{(p,x) \in P \times X}{x \in H_p \cap X}.
\]
We obviously have $\famX = \famI \cap (P \times X)$ inside $P \times Q$, in the
notation of \subsecref{subsec:dmodule}. Because $\famI \simeq \PP(\Theta_Q)$, it
follows that 
\[
	\famX \simeq \PP(\iu \Theta_Q)
\]
is itself a projective bundle of rank $d-1$ over $X$; consequently, $\famX$ is a
nonsingular very ample hypersurface in $P \times X$ of dimension $n+d-1$, with
corresponding line bundle $\OPX(\famX) \simeq \prPu \OP(1) \tensor \prXu \OX(1)$. Let
$\pi \colon \famX \to P$ be the projection to the first coordinate; its fibers
$\famX_p = \pi^{-1}(p)$ are the hyperplane sections of $X$.

We shall also use the subvariety $\famY \subseteq \famX$, defined by the condition
\[
	\famY = \menge{(p,x) \in P \times X}{\text{$H_p$ is tangent to $X$ at $x$}};
\]
its points are exactly the singular points in the fibers of $\pi$. Note that $\famY$
is itself nonsingular of dimension $d-1$, since it is isomorphic to the projective
bundle $\PP(\NXQ)$ over $X$; this follows from the exact sequence
\begin{equation} \label{eq:normal}
\begin{diagram}
0 &\rTo& \Theta_X &\rTo& \iu \Theta_Q &\rTo& \NXQ &\rTo& 0.
\end{diagram}
\end{equation}
The dual variety $\dualX \subseteq P$ is by definition the image $\pi(\famY)$. If
there are points in $P$ corresponding to hyperplane sections with just a single ordinary
double point singularity, then $\famY$ is birational to $\dualX$ and hence a
resolution of singularities.

\begin{note}
Both $\famX$ and $\famY$ can be considered as subvarieties of the projectivized cotangent
bundle $\PP(\Theta_P) \simeq \famI$. In what follows, we shall see that both
varieties are the support of the characteristic sheaves of two natural filtered $\Dmod$-modules on $P$.
\end{note}

\subsection{An auxilliary $\Dmod$-module}

Having completed our discussion of the universal hyperplane section, we now turn to
the problem of extending the residue calculus to singular hyperplane sections of $X$.
As above, let $\pi \colon \famX \to P$ be the universal hyperplane section.
To begin with, we define
\[
	\Nmod = \prPl \OmPXP{n}(\ast \famX),
\]
which is a quasi-coherent sheaf on $P$ whose sections over an open set $U$ are
relative rational $n$-forms on $U \times X$ with poles along $\famX$.
It has a natural increasing filtration by coherent subsheaves $F_k
\Nmod$, corresponding to the order of the pole; set
\[
	F_k \Nmod = \prPl \OmPXP{n} (k \famX) \simeq
		H^0 \bigl( X, \OmX{n}(k) \bigr) \tensor \OP(k),
\]
as well as $F_k \Nmod = 0$ for $k \leq 0$ to avoid degenerate cases.

We observe that $\Nmod$ is naturally a left $\Dmod$-module, since the sheaf of
differential operators $\DP$ acts on $\Nmod$ by differentiating the
coefficients. More precisely, for $\omega \in \Gamma(U, \Nmod)$ a relative rational
$n$-form on $U \times X$, and $\xi \in \Gamma(U, \Theta_P)$ a vector field, we define 
\[
	\xi \cdot \omega = \contr{\xi'}{d \omega} \in \Gamma(U, \Nmod),
\]
where $\xi'$ is the natural horizontal lifting of $\xi$ to a vector field on $U
\times X$, and $\contr{\xi'}{d \omega}$ indicates contraction. It is not hard to prove the following: (1) for a holomorphic function $f$
on $U$, we have $(f \xi) \cdot \omega = f (\xi \cdot \omega)$, as well as $\xi \cdot
(f \omega) = (\xi f) \cdot \omega + f (\xi \cdot \omega)$; (2) for a second vector
field $\eta$, we have $\lie{\xi}{\eta} \cdot \omega = \xi \cdot (\eta \cdot \omega) -
\eta \cdot (\xi \cdot \omega)$; (3) if $\omega$ is a section of $F_k \Nmod$, then
$\xi \cdot \omega$ is a section of $F_{k+1} \Nmod$. The action by vector fields thus
extends uniquely to an action by $\DP$, and so $\Nmod$ is a left $\Dmod$-module;
moreover, we have $F_m \Dmod_P \cdot F_k \Nmod \subseteq F_{m+k} \Nmod$, where
$F_{\bullet} \Dmod_P$ is the order filtration on $\Dmod_P$.

\begin{lemma} \label{lem:filtered}
$F_{\bullet} \Nmod$ is a good filtration, and $(\Nmod, F)$ is a coherent
filtered $\Dmod$-module.
\end{lemma}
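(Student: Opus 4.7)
The plan is to verify that $F_\bullet \Nmod$ satisfies all the conditions of a coherent filtered $\Dmod$-module. \textit{Most of these are already established in the paragraph preceding the lemma.} Each piece $F_k \Nmod$ is $\OP$-coherent (in fact locally free of finite rank), since the projection formula applied to $\OmPXP{n}(k\famX) \simeq \prXu \OmX{n}(k) \otimes \prPu \OP(k)$ gives
\[
	F_k \Nmod \simeq H^0\bigl(X, \OmX{n}(k)\bigr) \otimes_{\CC} \OP(k).
\]
The compatibility $F_m \DP \cdot F_k \Nmod \subseteq F_{m+k} \Nmod$ is stated explicitly just before the lemma, and the filtration is manifestly bounded below. \textit{The substantive point is therefore good-ness: that $\Gr^F \Nmod$ is finitely generated as a module over $\Sym \Theta_P$.}

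My approach is to prove the stronger statement that the symbol map
\[
	\Theta_P \otimes_{\OP} F_k \Nmod \esto \Gr^F_{k+1} \Nmod
\]
is surjective for all $k$ sufficiently large; together with the $\OP$-coherence of each $F_k \Nmod$, this implies that $\Gr^F \Nmod$ is generated in finitely many degrees. To compute the symbol locally, fix a defining equation $g$ for $\famX \subset P \times X$ and write $\omega = \alpha/g^k \in F_k \Nmod$ with $\alpha$ a relative $n$-form. Since $\xi'$ is horizontal, $\contr{\xi'}{\alpha} = 0$, and a direct calculation yields
\[
	\xi \cdot \omega \;=\; \contr{\xi'}{d\omega} \;=\; -k\,\frac{\xi'(g)}{g^{k+1}}\,\alpha \;+\; \frac{\contr{\xi'}{d\alpha}}{g^k},
\]
so modulo $F_k \Nmod$ the action of $\xi$ is $-k$ times multiplication by the section $\xi'(g) \in \prPu \OP(1) \otimes \prXu \OX(1)$. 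Globally, $g$ is linear in both the $P$- and $X$-coordinates, so under the Euler sequence $0 \to \OP \to V \otimes \OP(1) \to \Theta_P \to 0$ together with the identification for $F_k \Nmod$ above, the symbol map reduces---up to a nonzero scalar and to an $\OP(k+1)$ factor that is absorbed on both sides---to the ordinary multiplication
\[
	V \otimes_{\CC} H^0\bigl(X, \OmX{n}(k)\bigr) \esto H^0\bigl(X, \OmX{n}(k+1)\bigr).
\]

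Surjectivity of this multiplication map for $k \gg 0$ is standard Castelnuovo-Mumford regularity: because $\OX(1)$ is very ample, Serre vanishing furnishes an integer $m$ such that $\OmX{n}$ is $m$-regular with respect to $\OX(1)$, and then for every $k \geq m$ the multiplication $H^0\bigl(X, \OX(1)\bigr) \otimes H^0\bigl(X, \OmX{n}(k)\bigr) \to H^0\bigl(X, \OmX{n}(k+1)\bigr)$ is surjective. \textit{The main obstacle is the symbol computation itself:} one must carefully match the abstract $\DP$-action with honest multiplication by the canonical defining section $g$ under the Euler-sequence identifications, and verify that the resulting scalar $-k$ is nonzero (true for $k \geq 1$). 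Once this identification is in place, the reduction to Serre vanishing is routine.
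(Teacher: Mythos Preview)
Your proof is correct and follows essentially the same route as the paper's: both reduce goodness of the filtration to surjectivity of the symbol map $\Theta_P \otimes F_k \Nmod \to \Gr^F_{k+1}\Nmod$ for $k \gg 0$, compute that symbol as $-k$ times multiplication by the defining section of $\famX$, and then identify this with the multiplication map $H^0(X,\OX(1)) \otimes H^0(X,\OmX{n}(k)) \to H^0(X,\OmX{n}(k+1))$. The only cosmetic difference is that the paper carries out the symbol computation in affine coordinates at a fixed point of $P$, while you phrase it globally via the Euler sequence and invoke Castelnuovo--Mumford regularity by name; the content is identical.
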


\begin{proof}
To prove that the filtration is good, we need to show that $F_1 \Dmod_P \cdot
F_k \Nmod = F_{k+1} \Nmod$ for all sufficiently large $k$. This is equivalent to the
surjectivity of the map
\begin{equation} \label{eq:surj}
	\Theta_P \tensor F_k \Nmod \to F_{k+1} \Nmod / F_k \Nmod,
\end{equation}
a question which is local on $P$. Fix an arbitrary point $p \in P$, and let $s_0, s_1,
\dotsc, s_d$ be a basis for the vector space $H^0(X, \OX(1))$, with
$\lbrack s_0 \rbrack$ equal to the point $p$. Any local section of $F_k \Nmod$ can
now be written in the form
\[
	\omega = \frac{\omega(t)}{\bigl( s_0 + \sum t_i s_i \bigr)^k},
\]
for some holomorphic map $\omega(t)$ from $\CC^d$ into $H^0 \bigl( X, \OmX{n}(k)
\bigr)$. Setting $\partial_i = \partial/\partial t_i$, we then have
\[
	\bigl( \partial_i \cdot \omega \bigr) \big\vert_{t=0} = 
		- k \cdot \frac{s_i \omega(0)}{s_0^{k+1}} 
			+ \frac{\partial_i \omega(t) \restr{t=0}}{s_0^k}
		\equiv -k \cdot \frac{s_i \omega(0)}{s_0^{k+1}} \mod F_k \Nmod.
\]
Thus surjectivity of \eqref{eq:surj} on stalks is equivalent to surjectivity
of the product maps
\[
	H^0 \bigl( X, \OX(1) \bigr) \tensor H^0 \bigl( X, \OmX{n}(k) \bigr)
		\to H^0 \bigl( X, \OmX{n}(k+1) \bigr),
\]
which is satisfied for $k \gg 0$ because $\OX(1)$ is very ample.
\end{proof}

Since it is illustrative, we shall also compute the characteristic variety of
$\Nmod$; more precisely, we shall compute the characteristic sheaf $\shC(\Nmod, F)$.
To state the result, let $\phi \colon \famX \to X$ be the projection to the second
coordinate, and recall that $\famX$ is naturally embedded into $\PP(\Theta_P)$.

\begin{lemma} \label{lem:charN}
We have $\shC(\Nmod, F) \simeq \phiu \OmX{n}$; consequently, the characteristic
variety of $\Nmod$ is the cone over $\famX$, and thus has a single irreducible component
of dimension $\dim \famX + 1 = d + n$ and multiplicity one.
\end{lemma}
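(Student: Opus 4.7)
My plan is to compute the graded pieces $\Gr_k^F \Nmod$ for $k$ large, then take global sections to access the degree-$k$ part of $C(\Nmod, F)$, and finally recognize the resulting graded $S$-module as the one associated to $\phiu \OmX{n}$ (extended by zero) under the isomorphism $\PP(\Theta_P) \simeq \Proj S$ discussed in \subsecref{subsec:dmodule}.

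To carry this out, I would start from the adjunction sequence
\[
	0 \to \OmPXP{n}\bigl( (k-1) \famX \bigr) \to \OmPXP{n}(k \famX) \to
		\OmPXP{n}(k \famX) \restr{\famX} \to 0
\]
on $P \times X$ and push it forward by $\prP$. Since $\OX(1)$ is ample and $\OmPXP{n} \simeq \prXu \OmX{n}$, Serre vanishing gives $R^1 \prPl \OmPXP{n}\bigl( (k-1) \famX \bigr) \simeq H^1 \bigl( X, \OmX{n}(k-1) \bigr) \tensor \OP(k-1) = 0$ for $k \gg 0$. Combined with $\OmPXP{n} \restr{\famX} \simeq \phiu \OmX{n}$, the adjunction $\OPX(\famX) \restr{\famX} \simeq \piu \OP(1) \tensor \phiu \OX(1)$, and the projection formula, this yields
\[
	\Gr_k^F \Nmod \simeq \pil \bigl( \phiu \OmX{n}(k) \bigr) \tensor \OP(k)
\]
for all sufficiently large $k$. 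Taking global sections over $P$ and rewriting in terms of the inclusion $\iota \colon \famX \into \famI \simeq \PP(\Theta_P)$, under which $\iota^{\ast} \bigl( \OPQ(k\famI) \restr{\famI} \bigr) \simeq \piu \OP(k) \tensor \phiu \OX(k)$, this becomes
\[
	H^0 \bigl( P, \Gr_k^F \Nmod \bigr) \simeq
		H^0 \bigl( \famI, \iota_{\ast} \phiu \OmX{n} \tensor \OPQ(k \famI) \restr{\famI} \bigr),
\]
which is precisely the degree-$k$ component of the graded $S$-module associated to the coherent sheaf $\iota_{\ast} \phiu \OmX{n}$ on $\Proj S$.

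Because by \eqref{eq:charmod} the characteristic sheaf depends only on the homogeneous components of $C(\Nmod, F)$ in high degree, I will then conclude $\shC(\Nmod, F) \simeq \iota_{\ast} \phiu \OmX{n}$, which is what the statement abbreviates as $\phiu \OmX{n}$. The characteristic variety of $\Nmod$ is the cone in $T_P^{\ast}$ over the support of this sheaf; since $\phiu \OmX{n}$ is an invertible sheaf on the smooth irreducible variety $\famX$ of dimension $n + d - 1$, the cone is irreducible of dimension $d + n$ with multiplicity one. The only genuinely geometric input is the Serre vanishing above, which is automatic for large $k$; everything else is a careful bookkeeping of line bundles on the incidence varieties $\famX \subseteq P \times X$ and $\famI \subseteq P \times Q$.
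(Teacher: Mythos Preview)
Your proposal is correct and follows essentially the same approach as the paper. The only cosmetic difference is the order of operations: you first identify the sheaf $\Gr_k^F \Nmod$ on $P$ via $R^1 \prPl$-vanishing (Serre vanishing on $X$) and then take global sections, whereas the paper first passes to global sections using $H^1(P, F_{k-1} \Nmod) = 0$ (since $F_{k-1} \Nmod$ is a sum of copies of $\OP(k-1)$) and then invokes the same restriction sequence on $P \times X$ to identify $H^0(P, \Gr_k^F \Nmod)$ with $H^0 \bigl( \famX, \phiu \OmX{n} \tensor \OfamX(k) \bigr)$ for $k \gg 0$; the underlying exact sequence, the vanishing input, and the conclusion are identical.
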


\begin{proof}
As explained in \subsecref{subsec:dmodule}, the coherent sheaf $\shC(\Nmod, F)$ is
determined by its module of sections
\[
	C(\Nmod, F) = \bigoplus_{k \in \ZZ} 
		H^0 \bigl( P, F_k \Nmod / F_{k-1} \Nmod \bigr).
\]
To compute the individual summands, recall that
\[
	F_k \Nmod = \prPl \OmPXP{n}\bigl( k \famX \bigr) \simeq 
		H^0 \bigl( X, \OmX{n}(k) \bigr) \tensor \OP(k)
\]
for $k \geq 1$ (and $0$ otherwise). Thus $H^1(P, F_{k-1} \Nmod) = 0$, and so
\[
	H^0 \bigl( P, F_k \Nmod \big/ F_{k-1} \Nmod \bigr) \simeq 
		\frac{H^0 \bigl( P, F_k \Nmod \bigr)}{H^0 \bigl( P, F_{k-1} \Nmod \bigr)} \simeq 
		\frac{H^0 \bigl( P \times X, \OmPXP{n}(k \famX) \bigr)}
			{H^0 \bigl( P \times X, \OmPXP{n}((k-1) \famX) \bigr)}.
\]
Writing $\OfamX(1)$ for the restriction of $\OPX(\famX)$ to the universal hyperplane
section, we have the exact sequence
\begin{diagram}
	0 &\rTo& \OmPXP{n} \bigl( (k-1) \famX \bigr) &\rTo& 
		\OmPXP{n} \bigl( k \famX \bigr) &\rTo&
		\phiu \OmX{n} \tensor \OfamX(k) &\rTo& 0.
\end{diagram}
The higher cohomology of the first term being zero for $k \gg 0$, we thus get
\[
	H^0 \bigl( P, F_k \Nmod \big/ F_{k-1} \Nmod \bigr) \simeq 
		H^0 \bigl( \famX, \phiu \OmX{n} \tensor \OfamX(k) \bigr),
\]	
and hence $C(\Nmod, F)$ agrees in large degrees with the graded module
\[
	\bigoplus_{k = 0}^{\infty} H^0 \bigl( \famX, \phiu \OmX{n} \tensor \OfamX(k) \bigr).
\]
This means that the characteristic sheaf $\shC(\Nmod, F)$ is precisely $\phiu
\OmX{n}$, viewed as a sheaf on $\PP(\Theta_P)$ under the inclusion $\famX \into
\PP(\Theta_P)$.
\end{proof}

\subsection{The object of interest}
\label{subsec:Mmod}

We now define the $\Dmod$-module that we are actually interested in. Let $\shHO$ be the
vector bundle
\[
	\shHO = \OPsm \tensor_{\CC} \Rvan{n-1} \pisml \CC
\]
with fibers $\Hvan{n-1}(\famX_p, \CC)$. It underlies the variation of Hodge structure
$\Rvan{n-1} \pisml \QQ$ on the vanishing cohomology of the hyperplane sections. Let
$j \colon \Psm \into P$ be the inclusion. The operation of taking fiberwise residues
defines a map of sheaves
\[
	\ResfamXP \colon \Nmod \to \jl \shHO;
\]
concretely, for $\omega \in \Gamma(U, \Nmod)$, we let $\ResfamXP(\omega) \in
\Gamma(U \cap \Psm, \shHO)$ be the section whose value at a point $p \in U \cap \Psm$
is 
\[
	\Res_{\famX_p} \bigl( \omega \restr{X \setminus \famX_p} \bigr) \in 
		\Hvan{n-1}(\famX_p, \CC).
\]
We define $\Mmod$ as the image sheaf, and also let $F_k \Mmod$ be the image of
$F_k \Nmod$. Over $\Psm$, taking residue commutes with the action by vector fields
\cite{Voisin}*{p.~425--6}, and we can therefore give $\Mmod$ the
induced $\Dmod$-module structure. Then $F_{\bullet} \Mmod$ is a good filtration on
$\Mmod$, and $\ResfamXP \colon \Nmod \to \Mmod$ is a map of filtered $\Dmod$-modules. 

Clearly, $\ResfamXP$ is surjective over $\Psm$, and so we have $\ju \Mmod = \shHO$;
this is consistent with the $\Dmod$-module structure on $\shHO$ given by the
Gauss-Manin connection. Moreover, once $\OX(1)$ is sufficiently ample, we even have
\[
	\ju F_k \Mmod = F^{n-k} \shHO.
\]
because the Hodge filtration is determined by pole order. Thus $(\Mmod, F)$ is an
extension of $(\shHO, F)$ to a filtered $\Dmod$-module on $P$; we will see later that
$\Mmod$ is, in fact, the minimal extension in the category of holonomic
$\Dmod$-modules.

\section{Mixed Hodge modules}

In this section, we investigate the relationship between our filtered $\Dmod$-module
and M.~Saito's theory of mixed Hodge modules; the main point is that, up to a shift
in the filtration, $(\Mmod, F)$ underlies a Hodge module. 

\subsection{Mixed Hodge modules}

Instead of reviewing Saito's theory in the abstract, we shall concentrate on a
specific example: how to prove the following familiar result using mixed Hodge
modules.

\begin{proposition} \label{prop:HFrat}
Let $X$ be a smooth complex projective variety of dimension $n$, and let $D \subseteq
X$ be a smooth hypersurface. Assume that $\OX(D)$ is ample enough so that
$H^q \bigl( X, \OmX{p}(kD) \bigr) = 0$ for $q > 0$ and $k > 0$.
Then the cohomology in degree $k$ of the complex
\begin{diagram}
	H^0 \bigl( X, \OmX{p}(D) \bigr) &\rTo^d&
		H^0 \bigl( X, \OmX{p+1}(2D) \bigr) &\rToDots&
		H^0 \bigl( X, \OmX{n} \bigl( (n-p+1)D \bigr) \bigr)
\end{diagram}
is isomorphic to $F^p H^k(X \setminus D, \CC)$. The isomorphism takes a $d$-closed
rational form to the cohomology class defined by its restriction to $X \setminus D$.
\end{proposition}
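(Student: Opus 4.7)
The plan is to deduce the proposition from the machinery of mixed Hodge modules, viewing the complex in the statement as a piece of the filtered de Rham complex of Saito's direct image Hodge module $j_* \QQ_{X \setminus D}^H$, where $j \colon X \setminus D \into X$ is the open inclusion. First I would recall (a theorem of Saito's, valid because $D$ is smooth) that the underlying filtered $\Dmod_X$-module of this Hodge module is $(\OX(\ast D), F)$ with the pole-order filtration $F_k \OX(\ast D) = \OX((k+1) D)$ for $k \geq 0$ (and $0$ otherwise). Unwinding the filtered de Rham functor, the subcomplex $F_{-p} \DR_X (\OX(\ast D)) \decal{-n}$ has, in cohomological degree $q \geq p$, the term $\OmX{q} \tensor F_{q-p} \OX(\ast D) = \OmX{q} \bigl( (q-p+1) D \bigr)$, and differentials given by the exterior derivative. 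This is precisely the complex $\OmX{p}(D) \to \OmX{p+1}(2D) \to \dotsb \to \OmX{n} \bigl( (n-p+1) D \bigr)$ appearing in the statement.

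Next I would invoke Grothendieck's algebraic de Rham theorem, which identifies $H^k(X \setminus D, \CC)$ with $\HH^k \bigl( X, \OmX{\bullet}(\ast D) \bigr)$, via the map sending a rational $d$-closed form to the cohomology class of its restriction. Saito's strictness theorem for mixed Hodge modules then states that the map of hypercohomologies induced by the inclusion $F_{-p} \DR_X(\OX(\ast D)) \into \DR_X(\OX(\ast D))$ is \emph{injective} with image equal to $F^p H^k(X \setminus D, \CC)$; in other words, the Hodge filtration is computed term by term from the filtered de Rham complex of the underlying Hodge module. This provides the desired interpretation of $F^p H^k(X \setminus D, \CC)$ on the level of hypercohomology.

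It remains to pass from the hypercohomology to the plain cohomology of the global sections complex, and this is where the ampleness hypothesis enters. Running the first-page spectral sequence
\[
	E_1^{q,r} = H^r \bigl( X, \OmX{p+q} \bigl( (q+1) D \bigr) \bigr)
		\;\Longrightarrow\; \HH^{q+r} \bigl( X, F_{-p} \DR_X (\OX(\ast D)) \decal{-n} \bigr),
\]
the assumption $H^r \bigl( X, \OmX{p'}(k D) \bigr) = 0$ for $r, k > 0$ kills every row with $r > 0$. Hence the spectral sequence collapses at the second page, and the hypercohomology in degree $k$ is simply the $k$-th cohomology of the complex of global sections, exactly as claimed.

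The one substantial ingredient is the identification of $(\OX(\ast D), F)$ (with pole-order filtration) as the filtered $\Dmod$-module underlying $j_* \QQ_{X \setminus D}^H$; this is the main obstacle, and the point where Saito's theory does the essential work. In the classical language, it amounts to the comparison between the pole filtration and the (stupid truncation) filtration on the logarithmic de Rham complex $\OmX{\bullet}(\log D)$, combined with the fact that $\OmX{\bullet}(\log D) \into \OmX{\bullet}(\ast D)$ is a quasi-isomorphism (Deligne). Everything else in the proof is formal: the identification of the complex, the strictness of $F$ under hypercohomology, and the spectral-sequence degeneration from the vanishing assumption.
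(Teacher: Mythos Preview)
Your proof is correct and follows essentially the same route as the paper: identify the filtered $\Dmod$-module underlying $\jl \QQ_U^H \decal{n}$ as $\OX(\ast D)$ with the pole-order filtration (citing Saito), write out $F_{-p}\DR_X$ explicitly, invoke strictness to get $F^p H^k$ as a hypercohomology group, and then use the vanishing assumption to collapse hypercohomology to cohomology of the complex of global sections. The paper's argument is the same line by line; your added remark about the classical comparison with $\OmX{\bullet}(\log D)$ is a nice gloss but not needed for the argument.
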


For $X$ any quasi-projective complex algebraic variety, $\MHM(X)$ denotes the abelian
category of algebraic mixed Hodge modules on $X$. Each mixed Hodge module $M$ has an
underlying perverse sheaf $\rat M$, and the functor
\[
	\rat \colon \MHM(X) \to \Pervalg{X}
\]
to the category of $\QQ$-valued algebraic perverse sheaves on $X$ is fully faithful.
(Here ``algebraic'' means that we only consider stratifications whose strata are
algebraic varieties.) Each mixed Hodge module also has an underlying filtered
holonomic $\Dmod$-module $(\Mmod, F)$, which corresponds to (the complexification of)
$\rat M$ under the Riemann-Hilbert correspondence; the isomorphism $\rat M
\tensor_{\QQ} \CC \simeq \DR_X(\Mmod)$
should be seen as providing the $\Dmod$-module $\Mmod$ with a $\QQ$-structure.
In order for $M$ to be a mixed Hodge module, several very restrictive conditions have
to be satisfied (for instance, existence of a weight filtration, admissibility,
good behavior under nearby and vanishing cycle functors, to name just a few); since
we do not need them here, we refer to \cite{SaitoHM}*{\S4} for details.

The basic examples of mixed Hodge modules are (admissible, graded-polarized)
variations of mixed Hodge structure. In the case of a point, $\MHM(\pt)$ is the
category of (graded-polarized) mixed Hodge structures defined over $\QQ$. Following
Saito, we let $\QQ^H \in \MHM(\pt)$ be the unique Hodge structure on $\QQ$ of weight
zero. With $d_X = \dim X$ and $a_X \colon X \to \pt$ the structure map, we also define
$\QXH = \aXu \QQ^H \in D^b \MHM(X)$. When $X$ is smooth, $\QXH \decal{d_X}$
is a pure Hodge module on $X$ whose underlying perverse sheaf is $\QQ_X \decal{d_X}$;
the corresponding $\Dmod$-module is $\OX$, with filtration
\[
	F_k \OX = \begin{cases} 
		\OX & \text{if $k \geq 0$,} \\
		0   & \text{otherwise.}
	\end{cases}
\]

Now suppose that $X$ is a smooth projective variety of dimension $n$, and that $D
\subseteq X$ is a smooth divisor. The cohomology of $U = X \setminus D$ is governed
by the mixed Hodge module $\jl \QQ_U^H \decal{n}$ on $X$, where $j \colon U
\into X$ is the inclusion map. Since $D$ has no singularities, the underlying
filtered $\Dmod$-module is easy to describe: it is the sheaf $\Mmod_U = \OX(\ast D)$ of
rational functions with poles along $D$, and the filtration
\[
	F_k \Mmod_U = \begin{cases}
		\OX \bigl( (k+1)D \bigr) & \text{if $k \geq 0$,} \\
		0 & \text{otherwise,}
	\end{cases}
\]
is given by pole order (see \cite{SaitoB}*{Corollary~4.3} for more details).

To compute the Hodge filtration on the cohomology groups $H^i(U, \QQ)$, we observe
that, as mixed Hodge structures,
\[
	H^i(U) \simeq H^{i-n} a_{U \ast} \QQ_U^H \decal{n}
		\simeq H^{i-n} \aXl \bigl( \jl \QQ_U^H \decal{n} \bigr).
\]
The underlying vector space can be computed with the help of the de Rham complex of
the $\Dmod$-module $\Mmod_U$; this is the complex
\begin{diagram}
	\DR_X(\Mmod_U) = \Bigl\lbrack
		\Mmod_U &\rTo^d& \OmX{1} \tensor \Mmod_U &\rToDots& 
		\OmX{n} \tensor \Mmod_U \Bigr\rbrack \decal{n},
\end{diagram}
supported in degrees $-n, \dotsc, 0$. Its hypercohomology group $\HH^k \bigl(
\DR_X(\Mmod_U) \bigr)$ is the $\CC$-vector space underlying the mixed Hodge
structure $H^k a_{U \ast} \QQ_U^H \decal{n}$. 

The de Rham complex is naturally filtered by the subcomplexes
\begin{diagram}
	F_k \DR_X(\Mmod_U) = \Bigl\lbrack
		F_k \Mmod_U &\rTo& \OmX{1} \tensor F_{k+1} \Mmod_U &\rToDots& \OmX{n} 
		\tensor F_{k+n} \Mmod_U \Bigr\rbrack \decal{n},
\end{diagram}
and the induced filtration on hypercohomology, and hence on $H^k a_{U \ast} \QQ_U^H
\decal{n}$, is exactly the Hodge filtration. Moreover, since $X$ is smooth and projective, the
filtration is actually strict, meaning that the map
$\HH^k \bigl( F_k \DR_X(\Mmod_U) \bigr) \to \HH^k \bigl( \DR_X(\Mmod_U) \bigr)$
is injective. It is now very easy to compute the Hodge filtration on $H^i(U, \CC)$.
Indeed, 
\[
	F^p H^i(U, \CC) \simeq F_{-p} \, \HH^{i-n} \bigl( \DR_X(\Mmod_U) \bigr)
		\simeq \HH^{i-n} \bigl( F_{-p} \DR_X(\Mmod_U) \bigr),
\]
and the term in degree $i-n$ of the indicated complex is
\[
	\OmX{n+(i-n)} \tensor F_{-p+n+(i-n)} \Mmod_U = \begin{cases}
		\OmX{i} \bigl( (i-p+1) D \bigr) & \text{for $i \geq p$,} \\
		0 & \text{otherwise.}
	\end{cases}
\]
Under the ampleness assumptions on $D$, each of these sheaves has no higher
cohomology, and so we obtain the assertion of Proposition~\ref{prop:HFrat}.

\subsection{Cohomology of the universal hypersurface}

We take from \cite{BFNP} the idea of studying the universal hypersurface by means
of mixed Hodge modules. Where convenient for the purpose of this paper, we reprove
some of their results using the theory of $\Dmod$-modules instead of perverse
sheaves. For notational economy, let $d = \dim P$, $n = \dim X$, and $\dfamX = n + d
-1$.

The crucial point is that $\famX$ is nonsingular; this makes it possible to use the
Decomposition Theorem. Recall from \cite{BFNP}*{4.5} that there is a decomposition
\begin{equation} \label{eq:decomp}
	\pil \QfamXH \decal{\dfamX} \simeq \bigoplus_{i,j} E_{i,j} \decal{-i}
\end{equation}
in the derived category $D^b \MHM(P)$. Indeed, since $\famX$
is smooth and projective, the mixed Hodge module $\QfamXH \decal{\dfamX}$ on $\famX$
is pure of weight $\dfamX$; the same is then true for its image under $\pil$ because
$\pi$ is smooth and projective. Applying Saito's version of the Decomposition
Theorem, we have 
\[
	\pil \QfamXH \decal{\dfamX} \simeq \bigoplus_i E_i \decal{-i},
\]
where $E_i = H^i \pil \QfamXH \decal{\dfamX}$. We can further decompose each $E_i$
into simple Hodge modules; let $E_{i,j}$ be the direct sum of all those pieces whose
codimension of strict support is equal to $j$. We then arrive at the isomorphism in
\eqref{eq:decomp}, with $E_{i,j}$ pure of weight $\dfamX + i$ and supported on a
closed subscheme of $P$ of codimension $j$. Moreover, we see that
\begin{equation} \label{eq:decompk}
	H^i \pil \QfamXH \decal{\dfamX} \simeq \bigoplus_{j = 0}^d E_{i,j}
\end{equation}
for all $i \in \ZZ$. This decomposition is most interesting when $i=0$.

One general result about the decomposition is Saito's Hard Lefschetz Theorem
\cite{SaitoMHM}*{Th\'eor\`eme~1 on p.~853}, which asserts that
\begin{equation} \label{eq:SHL}
	E_{i,j} \simeq E_{-i,j}(-i);
\end{equation}
note that it again depends on the fact that $\pi$ is projective and smooth.
Essentially all the individual Hodge modules $E_{i,j}$ have been computed in
\cite{BFNP}; for the convenience of the reader, we summarize the main results:
\begin{enumerate}[label=(\alph{*}), ref=(\alph{*})]
\item Perverse Weak Lefschetz Theorem: $E_{i,j} = 0$ unless $i=0$ or $j=0$.
\item $E_{i,0} \simeq H^{n+i-1}(X) \tensor \QPH \decal{d}$ for $i < 0$.
\item $E_{0,0} \simeq \jlsl V^{n-1} \simeq \jlsl \Vvan \oplus H^{n-1}(X) \tensor \QPH
\decal{d}$.
\item $E_{0,1} = 0$ iff the vanishing cohomology $\Vvan$ is nonzero.
\item $E_{0,j} = 0$ for all $j > 0$, provided that $\OX(1)$ is sufficiently ample.
\end{enumerate}
To understand the notation, recall that $j \colon \Psm \into P$ is the inclusion of
the open set where the map $\pi \colon \famX \to P$ is smooth. Let $V^{n-1} =
H^{n-1} \pisml \QQ_{\famXsm}^H \decal{d}$ denote the Hodge module on $\Psm$ whose
underlying perverse sheaf is the shifted local system $R^{n-1} \pisml \QQ \decal{d}$;
it admits a direct sum decomposition
\begin{equation} \label{eq:V-direct-sum}
	V^{n-1} = \Vvan \oplus H^{n-1}(X) \tensor \QQ_{\Psm}^H \decal{d},
\end{equation}
where $\Vvan$ is the polarized variation of Hodge structure given by the vanishing
cohomology of the fibers, viewed as an element of $\MHM \bigl( \Psm \bigr)$. Finally,
$\jlsl$ is the intermediate extension functor.

The methods of this paper produce a stronger vanishing theorem for the modules
$E_{0,j}$, namely that $E_{0,j} = 0$ for every $j \geq 2$ (the proof can be found in
\subsecref{subsec:ample} below). If we combine this small improvement with the
results of \cite{BFNP}, we obtain the following useful formulas for the cohomology of
the universal hypersurface. 

\begin{theorem}[Brosnan, Fang, Nie, and Pearlstein] \label{thm:BFNP}
If the vanishing cohomology of the hypersurfaces is nontrivial, then 
\[
	H^k \pil \QfamXH \decal{\dfamX} \simeq \begin{cases}
		H^{k+n-1}(X) \tensor \QPH \decal{d} & \text{for $k < 0$,} \\
		H^{k+n+1}(X)(1) \tensor \QPH \decal{d} & \text{for $k > 0$,} \\
		\jlsl \Vvan \oplus H^{n-1}(X) \tensor \QPH \decal{d} & \text{for $k=0$,}
	\end{cases}
\]
as polarized Hodge modules on $P$ of weight $k + (d+n-1)$.
\end{theorem}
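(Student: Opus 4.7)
The plan is to combine the decomposition \eqref{eq:decomp} with the five properties (a)--(e) of the $E_{i,j}$ collected from \cite{BFNP} together with the strengthening of (e) --- namely the vanishing $E_{0,j} = 0$ for all $j \geq 2$ without any ampleness hypothesis --- whose proof is postponed to \subsecref{subsec:ample}.

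First, I would use \eqref{eq:decompk} to write $H^k \pil \QfamXH \decal{\dfamX} \simeq \bigoplus_{j=0}^{d} E_{k,j}$ and invoke the Perverse Weak Lefschetz Theorem (a) to see that only $E_{k,0}$ (for $k \neq 0$) and the summands $E_{0,j}$ (for $k=0$) can contribute. This immediately reduces the problem to identifying these remaining pieces case by case.

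For $k < 0$, property (b) directly yields $E_{k,0} \simeq H^{k+n-1}(X) \tensor \QPH \decal{d}$. For $k > 0$, I would apply Saito's Hard Lefschetz \eqref{eq:SHL} to get $E_{k,0} \simeq E_{-k,0}(-k)$, then use (b) with index $-k$ to obtain $E_{k,0} \simeq H^{n-k-1}(X)(-k) \tensor \QPH \decal{d}$; a final application of the classical Hard Lefschetz isomorphism on $X$, $H^{n-k-1}(X) \simeq H^{n+k+1}(X)(k+1)$, rewrites this as $H^{n+k+1}(X)(1) \tensor \QPH \decal{d}$, matching the claimed formula. For $k=0$, the standing hypothesis $\Vvan \neq 0$ makes (d) applicable and forces $E_{0,1} = 0$; combined with the strengthened (e), this gives $E_{0,j} = 0$ for every $j \geq 1$, so $H^0 \pil \QfamXH \decal{\dfamX} \simeq E_{0,0}$, and (c) then provides the decomposition $\jlsl \Vvan \oplus H^{n-1}(X) \tensor \QPH \decal{d}$. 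The weight statement in each case is immediate from the fact that $E_{i,j}$ is pure of weight $\dfamX + i = k + (d + n - 1)$, using that a Tate twist by $1$ drops weight by $2$.

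Thus the proof reduces to routine bookkeeping once one has the full list (a)--(e). The only genuinely new ingredient beyond the work of \cite{BFNP} is the strengthened vanishing $E_{0,j} = 0$ for $j \geq 2$, and this is precisely where the main difficulty lies; it rests on the calculation of the characteristic sheaf of $(\Mmod, F)$ (cf.\ Theorem~\ref{thm:B}) and is carried out in \subsecref{subsec:ample}.
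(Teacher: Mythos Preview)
Your proposal is correct and follows essentially the same approach as the paper: the paper's proof is the single sentence ``This follows from \eqref{eq:decompk} and the other results quoted above,'' and you have simply unpacked that sentence, spelling out how properties (a)--(e), Saito's Hard Lefschetz \eqref{eq:SHL}, and the strengthened vanishing from \subsecref{subsec:ample} combine to give each case. The only cosmetic difference is that you deduce the $k>0$ case from the $k<0$ case via \eqref{eq:SHL} and classical Hard Lefschetz, whereas in \subsecref{subsec:pieces} the paper does the reverse (it first obtains $k>0$ from the exact sequence and then derives $k<0$); but for the purposes of this theorem either direction works, since (b) is being taken as input from \cite{BFNP}.
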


\begin{proof}
This follows from \eqref{eq:decompk} and the other results quoted above.
\end{proof}

\subsection{An exact sequence}

We shall now derive most of the above results from a somewhat different point of
view; this will also show how the $\Dmod$-module $(\Mmod, F)$ is connected to mixed
Hodge modules.
\begin{diagram}[midshaft,l>=2em]
	\famX &\rTo^i& P \times X &\lTo^g& \famU \\
		&\rdTo^{\pi}& \dTo &\ldTo^{q}& \\
	&& P
\end{diagram}
Let $\famU = P \times X \setminus \famX$ be the complement of the universal hypersurface.
We denote by $g$ the inclusion of $\famU$ into $P \times X$, and by $i$ that of $\famX$ into $P
\times X$, as shown in the diagram.  Our starting point is the distinguished triangle 
\disttrianglelinelong{}%
	{\il \ius \QPXH}%
	{\QPXH}%
	{\gl \gu \QPXH}%
	{\il \ius \QPXH \decal{1}}{}{}{}
in the category $D^b \MHM(P \times X)$ \cite{SaitoMHM}*{(4.4.1) on p.~321}. 
Obviously, $\gu \QPXH = \QUH$; moreover, since $\famX \subseteq P \times X$ is a
smooth hypersurface, Verdier duality gives an isomorphism
$\ius \QPXH \simeq \QfamXH(-1) \decal{-2}$.
We can therefore rotate the triangle one step to the left and shift by $\dPX$ steps
to obtain
\disttrianglelinelong{}%
	{\QPXH \decal{\dPX}}%
	{\gl \QUH \decal{\dfamU}}%
	{\il \QfamXH(-1) \decal{\dfamX}}%
	{\QPXH \decal{\dPX + 1}}{}{}{}
We then apply the functor $\prPl$ and take cohomology; this gives a long exact sequence
in $\MHM(P)$, a typical portion of which is (for $k \in \ZZ$ arbitrary)
\begin{equation} \label{eq:MHM-seq}
\begin{diagram}
	H^{k-1} \pil \QfamXH(-1) \decal{\dfamX} \\
	\dTo \\
	H^k \prPl \QPXH \decal{\dPX} &\rTo& H^k \ql \QUH \decal{\dfamU}
		&\rTo& H^k \pil \QfamXH(-1) \decal{\dfamX} \\
	&& && \dTo \\
	&& && H^{k+1} \prPl \QPXH \decal{\dPX}.
\end{diagram}
\end{equation}

Ultimately, we would like to find $H^0 \pil \QfamXH \decal{\dfamX}$; this makes it
necessary to compute the other mixed Hodge modules in the sequence.
Throughout, $H^i(X)$ denotes the Hodge structure on the $i$-th cohomology of
$X$, viewed as an object in $\MHM(\pt)$.

\begin{lemma} \label{lem:QPX}
For each $k \in \ZZ$, we have $H^k \prPl \QPXH \decal{\dPX} \simeq H^{k+n}(X) \tensor \QPH
\decal{d}$.
\end{lemma}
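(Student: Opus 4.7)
The plan is to use base change in the cartesian square
\[
	\prP \colon P \times X \to P, \quad \prX \colon P \times X \to X, \quad a_P \colon P \to \pt, \quad a_X \colon X \to \pt,
\]
to reduce the pushforward along $\prP$ to the cohomology of $X$. First I would observe that $\QPXH = \prXu \QXH$, so base change (which is available here since $a_P$ is smooth and $X$ is proper) yields
\[
	\prPl \QPXH \simeq \aPu \aXl \QXH
\]
in $D^b \MHM(P)$.

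Next, because $X$ is smooth and projective, the decomposition theorem---equivalently, Deligne's degeneration of the Leray spectral sequence combined with the semisimplicity of polarizable pure Hodge structures---gives a splitting $\aXl \QXH \simeq \bigoplus_i H^i(X) \decal{-i}$ in $D^b \MHS$. Applying the exact functor $\aPu$, which sends a Hodge structure $V$ to the object $V \tensor \QPH$ of $D^b \MHM(P)$, yields
\[
	\prPl \QPXH \simeq \bigoplus_i \bigl( H^i(X) \tensor \QPH \bigr) \decal{-i}.
\]

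Then I would shift by $\decal{\dPX} = \decal{d+n}$ and rewrite each summand using the identity $(H^i(X) \tensor \QPH) \decal{d+n-i} = (H^i(X) \tensor \QPH \decal{d}) \decal{n-i}$. Since $H^i(X) \tensor \QPH \decal{d}$ is a Hodge module, i.e., lies in cohomological degree $0$ of $D^b \MHM(P)$, the $i$-th summand sits in degree $i-n$. Applying $H^k$ therefore isolates the unique summand with $i = k+n$, giving the claimed formula.

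The main obstacle is the bookkeeping of shifts and weights within Saito's conventions---in particular, justifying that base change and the splitting of $\aXl \QXH$ both hold in the MHM framework rather than merely at the level of underlying constructible complexes. Both points are standard: base change uses smoothness of $a_P$ together with properness of $a_X$, and the splitting reflects the fact that $\aXl \QXH$ is a bounded complex of pure Hodge structures of distinct weights, hence formal.
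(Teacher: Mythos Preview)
Your proof is correct and follows essentially the same approach as the paper: base change in the square $P \times X \to P$, $P \times X \to X$, then the splitting $\aXl \QXH \simeq \bigoplus_i H^i(X)\decal{-i}$ in $D^b \MHM(\pt)$, followed by the same shift bookkeeping to identify which summand survives after applying $H^k$. The paper invokes proper base change (citing \cite{SaitoMHM}*{(4.4.3)}) rather than smoothness of $a_P$, but this is the only cosmetic difference.
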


\begin{proof}
By Proper Base Change \cite{SaitoMHM}*{(4.4.3) on p.~323} for the diagram
\begin{diagram}[midshaft,l>=2.5em]
	P \times X &\rTo^{\prX}& X \\
	\dTo_{\prP} & & \dTo_{a_X} \\
	P &\rTo^{a_P}& \pt,
\end{diagram}
we have $\prPl \QPXH = \prPl \prXu \QXH = \aPu \aXl \QXH$. From the decomposition
\[
	\aXl \QXH \simeq \bigoplus_i H^i \aXl \QXH \decal{-i} 
		= \bigoplus_i H^i(X) \decal{-i}
\]
in $\MHM(\pt)$ \cite{SaitoHM}*{Corollaire~3 on p.~857}, it follows that
\[
	\prPl \QPXH \decal{\dPX} \simeq \aPu \bigoplus_i H^i(X)[\dPX-i] = 
		\bigoplus_i H^i(X) \tensor \QPH \decal{\dPX-i}.
\]
Now apply $H^k$ to get $H^k \prPl \QPXH \decal{\dPX} \simeq H^{k+n}(X) \tensor \QPH
\decal{d}$, since $\QPH \decal{\dPX-i}$ sits in degree $d - (\dPX - i) = i - n$.
\end{proof}

The cohomology of a smooth affine variety vanishes above the middle dimension. The
following lemma shows that a similar result is true for the cohomology of $\ql \QUH
\decal{\dfamU}$; given that $U = P \times X \setminus \famX$ is affine, this is not
surprising.

\begin{lemma} \label{lem:QUH-van}
We have $H^k \ql \QUH \decal{\dfamU} = 0$ for all $k > 0$.
\end{lemma}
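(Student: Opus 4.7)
The plan is to deduce the lemma from Artin's vanishing theorem in the mixed Hodge module setting, once the morphism $q \colon \famU \to P$ is identified as an affine morphism. First I would check that $q$ is affine. The projection $\prP \colon P \times X \to P$ is projective, and the universal hyperplane section $\famX \subseteq P \times X$ is a relatively ample divisor for $\prP$, because the line bundle
\[
  \OPX(\famX) \simeq \prPu \OP(1) \tensor \prXu \OX(1)
\]
restricts on each fiber $\{p\} \times X$ to the very ample line bundle $\OX(1)$. The complement of a relatively ample divisor in a projective morphism is affine over the base, so $q \colon \famU \to P$ is indeed an affine morphism.

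Now I would invoke Saito's version of Artin vanishing in the category of mixed Hodge modules (see \cite{SaitoMHM}): for any affine morphism $f$, the direct image functor $\fl$ is right t-exact with respect to the perverse t-structure on $D^b \MHM$. In particular, for any mixed Hodge module $M$ on $\famU$, one has $H^k \ql M = 0$ for all $k > 0$. Since $\famU$ is smooth of dimension $\dfamU$, the shifted constant Hodge module $\QUH \decal{\dfamU}$ is a single (pure) Hodge module of weight $\dfamU$ placed in degree zero, so this vanishing applies verbatim and yields $H^k \ql \QUH \decal{\dfamU} = 0$ for all $k > 0$.

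The only step requiring any real thought is the affineness of $q$; once that is in place, Artin--Saito vanishing does the rest of the work. I would expect the proof in the text to be very short, amounting to the observation that $\famU$ is affine over $P$ and the citation of the appropriate t-exactness result from Saito's framework.
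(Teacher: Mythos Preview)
Your argument is correct: $\OPX(\famX)$ is $\prP$-ample since it restricts to the very ample $\OX(1)$ on each fiber, so $q$ is affine, and Artin--Saito vanishing (right t-exactness of $\ql$ for affine $q$, lifted from perverse sheaves to $D^b\MHM$ via the faithful t-exact functor $\rat$) gives the result immediately.

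The paper, however, does not argue this way. Instead it passes to the underlying $\Dmod$-modules: the mixed Hodge module $\gl \QUH \decal{\dfamU}$ has underlying $\Dmod$-module $\OPX(\ast\famX)$, and its direct image under $\prP$ is computed by the relative de Rham complex $\prPl \DR_{P\times X/P}\bigl(\OPX(\ast\famX)\bigr)$, which lives in degrees $-n,\dotsc,0$; hence all cohomology in positive degrees vanishes, and by faithfulness of the functor to $\Dmod$-modules the same holds for the mixed Hodge modules. Your approach is cleaner and more conceptual, and your expectation that the text would simply cite Artin vanishing is reasonable but wrong in this case. The paper's more hands-on route has a payoff, though: the same relative de Rham complex is reused immediately afterwards (in \subsecref{subsec:underlyingD}) to describe the filtered $\Dmod$-modules $\Nmod^j$ explicitly, so the computation is not wasted effort but rather sets up the machinery for what follows.
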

\begin{proof}
We give a proof using $\Dmod$-modules. The mixed Hodge module $\gl \QUH
\decal{\dfamU}$ has underlying $\Dmod$-module $\OPX(\ast \famX)$, because $\famX$ is
nonsingular \cite{SaitoB}*{Corollary~4.3}.
The $\Dmod$-module associated to $\ql \QUH \decal{\dfamU} = \prPl \gl \QUH
\decal{\dfamU}$ is therefore the direct image of $\OPX(\ast \famX)$; it can be
computed from the relative de Rham complex
\begin{multline*}
	\DR_{P \times X / P} \bigl( \OPX(\ast \famX) \bigr) =  \\
\begin{diagram}
	\Bigl\lbrack 
		\OPX(\ast \famX) &\rTo& \OmPXP{1}(\ast \famX) &\rToDots& \OmPXP{n}(\ast \famX)
	\Bigr\rbrack \decal{n}.
\end{diagram}
\end{multline*}
Noting that each sheaf in the complex is acyclic for the functor $\prPl$, the direct
image is represented by the complex $\prPl \DR_{P \times X / P} \bigl( \OPX(\ast
\famX) \bigr)$. It is clearly supported in degrees $-n, \dotsc, 0$, and therefore
the cohomology sheaf in degree $k > 0$ vanishes. Since the functor that takes a mixed
Hodge module to its underlying $\Dmod$-module is faithful, we conclude that $H^k
\ql \QUH \decal{\dfamU}$ is also zero.
\end{proof}

Finally, we borrow the following lemma from \cite{BFNP}*{Proposition~4.8}.

\begin{lemma}[Brosnan, Fang, Nie, and Pearlstein] \label{lem:BFNP2}
We have
\[
	E_{0,0} \simeq \jlsl V^{n-1} \simeq 
		\jlsl \Vvan \oplus H^{n-1}(X) \tensor \QPH \decal{d}.
\]
\end{lemma}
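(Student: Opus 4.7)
The plan is to identify $E_{0,0}$ by restricting to the smooth locus $\Psm$ and then invoking the structure theorem for pure Hodge modules of strict support equal to all of $P$.

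First, I would analyze the restriction of $H^0 \pil \QfamXH \decal{\dfamX}$ to $\Psm$. Over $\Psm$, the morphism $\pism \colon \famXsm \to \Psm$ is smooth and projective of relative dimension $n-1$, so (by the Deligne/Saito decomposition for smooth projective maps) $\pisml \QQ_{\famXsm}^H \decal{\dfamX}$ splits in $D^b \MHM(\Psm)$ as the direct sum of its cohomology Hodge modules. Because $\dfamX = d + n - 1$, the degree-zero piece is exactly $V^{n-1} = H^{n-1} \pisml \QQ_{\famXsm}^H \decal{d}$, the polarized VHS on $\Psm$ whose underlying local system is $R^{n-1} \pisml \QQ$. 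Thus the restriction of $H^0 \pil \QfamXH \decal{\dfamX}$ to $\Psm$ equals $V^{n-1}$.

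Next, I would apply the decomposition \eqref{eq:decompk} together with Saito's structure theorem. The module $E_{0,0}$ is the summand of $H^0 \pil \QfamXH \decal{\dfamX}$ consisting of those simple direct summands whose strict support has codimension zero, i.e.\ equals $P$. All other pieces $E_{0,j}$ with $j \geq 1$ are supported on closed subsets of $P \setminus \Psm$ (after possibly enlarging the singular locus slightly, but in any case on proper subvarieties of $P$, which are contained in the discriminant $\dualX \subseteq P \setminus \Psm$ for the relevant components). Therefore the restriction of $E_{0,0}$ to $\Psm$ agrees with the restriction of $H^0 \pil \QfamXH \decal{\dfamX}$, namely with $V^{n-1}$. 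Since $E_{0,0}$ is a pure polarized Hodge module with strict support equal to $P$, Saito's classification forces $E_{0,0} \simeq \jlsl\bigl( E_{0,0} \restr{\Psm} \bigr) \simeq \jlsl V^{n-1}$.

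Finally, the second isomorphism comes from the splitting \eqref{eq:V-direct-sum}, which is just the Lefschetz decomposition applied fiberwise: for smooth $\famX_p$, the primitive Gysin sequence gives $H^{n-1}(\famX_p, \QQ) = \iu H^{n-1}(X, \QQ) \oplus \Hvan{n-1}(\famX_p, \QQ)$, and this is a splitting of polarized variations of Hodge structure on $\Psm$. Since $\jlsl$ respects direct sums, and since $\jlsl$ applied to the constant VHS $H^{n-1}(X) \tensor \QQ_{\Psm}^H \decal{d}$ is simply $H^{n-1}(X) \tensor \QPH \decal{d}$, we obtain
\[
	\jlsl V^{n-1} \simeq \jlsl \Vvan \oplus H^{n-1}(X) \tensor \QPH \decal{d},
\]
which completes the proof. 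The main subtlety is the shift bookkeeping (to see that the degree-zero cohomology of $\pisml \QQ_{\famXsm}^H \decal{\dfamX}$ really is $V^{n-1}$ and not a neighboring piece) and the verification that the only direct summand of $H^0 \pil \QfamXH \decal{\dfamX}$ meeting $\Psm$ nontrivially is $E_{0,0}$, which relies on the decomposition \eqref{eq:decompk} being a decomposition by strict support.
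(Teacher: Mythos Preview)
Your proposal is correct and follows essentially the same route as the paper. The paper invokes proper base change along $j \colon \Psm \into P$ to compute $\ju H^0 \pil \QfamXH \decal{\dfamX} \simeq V^{n-1}$, observes that the pieces $E_{0,j}$ with $j \geq 1$ have proper support and hence vanish on $\Psm$, and then concludes $E_{0,0} \simeq \jlsl V^{n-1}$ from the strict-support characterization; your argument does the same, with slightly more detail on the shift bookkeeping and on why $\jlsl$ of the constant summand is $H^{n-1}(X) \tensor \QPH \decal{d}$.
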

\begin{proof}
By definition, $E_{0,0}$ is the piece in the decomposition of $H^0 \pil \QfamXH
\decal{\dfamX}$ that has strict support equal to all of $P$. By the Base Change
Theorem, applied to the inclusion $j \colon \Psm \to P$, we have
\[
	\ju H^0 \pil \QfamXH \decal{\dfamX} 
		\simeq H^0 \pisml \QQ_{\famXsm}^H \decal{\dfamX} = 
		H^{n-1} \pisml \QQ_{\famXsm}^H \decal{d} = V^{n-1}.
\]
Therefore, $\jlsl V^{n-1}$ is a submodule of $H^0 \pil \QfamXH \decal{\dfamX}$. Since
all other terms $E_{0,j}$ are supported in proper subvarieties, we conclude that
$\jlsl V^{n-1} \simeq E_{0,0}$. The second isomorphism is then an immediate
consequence of \eqref{eq:V-direct-sum}.
\end{proof}

\subsection{Analysis of the exact sequence}
\label{subsec:pieces}

We can now look at the exact sequence in \eqref{eq:MHM-seq} one more time. For $k >
0$, each portion of the sequence simplifies to
\begin{diagram}
	0 &\rTo& H^k \pil \QfamXH (-1) \decal{\dfamX} &\rTo& 
		H^{n+1+k}(X) \tensor \QPH \decal{d} &\rTo& 0,
\end{diagram}
using the vanishing in Lemma~\ref{lem:QUH-van}, and the result of
Lemma~\ref{lem:QPX}. In terms of the decomposition \eqref{eq:decompk}, we thus have
\[
	E_{k,0} = E_k \simeq H^{n+1+k}(X)(1) \tensor \QPH \decal{d} 
		\quad \text{for $k > 0$,}
\]
noting that $E_{k,j} = 0$ if $j \neq 0$ because $E_k$ is supported on all of $P$.
For $k < 0$, we deduce from this and Saito's Hard Lefschetz Theorem \eqref{eq:SHL}
that
\[
	E_{k,0} \simeq E_{-k,0}(-k) \simeq H^{n+1-k}(X)(1-k) \tensor \QPH \decal{d}
		\simeq H^{n+k-1}(X) \tensor \QPH \decal{d},
\]
where the last isomorphism is because of the usual Hard Lefschetz Theorem. Since both
isomorphisms are induced by the polarization, and therefore compatible, 
it follows that the restriction map $H^{n+k-1}(X) \tensor \QPH \decal{d} \to E_{k,0}$
itself has to be an isomorphism.  Again, $E_{k,j} = 0$ if $k < 0$ and $j \neq
0$, and so we have proved (a).

Next, we look at the exact sequence in negative degrees. After incorporating the
results from above, a typical portion simplifies to
\begin{diagram}[l>=2em]
H^{n+k-2}(X)(-1) \tensor \QPH \decal{d} \\
\dTo \\
H^{n+k}(X) \tensor \QPH \decal{d} &\rTo& H^k \ql \QUH \decal{\dfamU} &\rTo&
H^{n+k-1}(X)(-1) \tensor \QPH \decal{d} \\
&& && \dTo \\
&& && H^{n+k+1}(X) \tensor \QPH \decal{d}.
\end{diagram}
Both vertical maps are injective (by the usual Hard Lefschetz Theorem), and so we
find that
\begin{equation} \label{eq:kneg}
	H^k \ql \QUH \decal{\dfamU} \simeq \frac{H^{n+k}(X)}{H^{n+k-2}(X)(-1)} 
		\tensor \QPH \decal{d}
\end{equation}
when $k < 0$.

Finally, the part of the exact sequence for $k = 0$ reads
\begin{diagram}
H^{n-2}(X)(-1) \tensor \QPH \decal{d} \\
\dTo \\
H^{n}(X) \tensor \QPH \decal{d} &\rTo& H^k \ql \QUH \decal{\dfamU} &\rTo&
H^0 \pil \QfamXH \decal{\dfamX} \\
&& && \dTo \\
&& && H^{n+1}(X) \tensor \QPH \decal{d}.
\end{diagram}
From \eqref{eq:decompk} and Lemma~\ref{lem:BFNP2}, we have the decomposition
\begin{equation} \label{eq:decompH0}
	H^0 \pil \QfamXH \decal{\dfamX}
 	\simeq \jlsl \Vvan \oplus H^{n-1}(X) \tensor \QPH \decal{d} \oplus 
		\bigoplus_{j > 0} E_{0,j}.
\end{equation}
Combining this with the fact that the primitive cohomology of weight $n$ satisfies
$\HXprim \simeq H^n(X) / H^{n-2}(X)(-1)$, we obtain the short exact sequence
\begin{equation} \label{eq:kzero}
\begin{diagram}
	0 &\rTo& \HXprim \tensor \QPH \decal{d} &\rTo& H^0 \ql \QUH \decal{\dfamU}
		&\rTo& \jlsl \Vvan(-1) \oplus R &\rTo& 0
\end{diagram}
\end{equation}
Here $R = \bigoplus_{j > 0} E_{0,j}(-1)$ is a sort of ``error term,'' containing those
pieces in the decomposition of $H^0 \pil \QfamXH \decal{\dfamX}$ whose support is
contained in the dual variety $\dualX$. As mentioned above, it was shown in
\cite{BFNP} that $R = 0$ for sufficiently ample $\OX(1)$; we give a more precise
statement in \subsecref{subsec:ample}.

\subsection{The underlying $\Dmod$-modules}
\label{subsec:underlyingD}

We now pass to the underlying $\Dmod$-modules in the exact sequence
\eqref{eq:MHM-seq}. As explained during the proof of Lemma~\ref{lem:QUH-van},
the mixed Hodge module $\gl \QUH \decal{\dfamU}$ has associated $\Dmod$-module
$\OPX(\ast \famX)$, with filtration
\begin{equation} \label{eq:pole}
	F_k \OPX(\ast \famX) = \begin{cases}
		\OPX \bigl( (k+1) \famX \bigr) & \text{if $k \geq 0$,} \\
		0 & \text{otherwise.}
	\end{cases}
\end{equation}
To compute the direct images under the projection $P \times X \to P$, we use the
relative de Rham complex
\begin{diagram}
	\DR_{P \times X / P} = \Bigl\lbrack 
		\OPX &\rTo& \OmPXP{1} &\rToDots& \OmPXP{n}
	\Bigr\rbrack \decal{n},
\end{diagram}
with differential $\dPXP$. For $q > 0$, we have
\[
	R^q \prPl \OmPXP{k} \tensor \OPX(\ast \famX) = 
		R^q \prPl \OmPXP{k}(\ast \famX) = 0,
\] 
and so each of the sheaves $\OmPXP{k}(\ast \famX)$ is acyclic for the push-forward
map $\prPl$. Thus the complex $\prPl \DR_{P \times X / P} \bigl( \OPX(\ast \famX)
\bigr)$, which looks like
\begin{equation} \label{eq:dRpr}
\begin{diagram}
 	\Bigl\lbrack 
		\prPl \OPX(\ast \famX) &\rTo& \prPl\OmPXP{1}(\ast \famX) &\rToDots& 
			\prPl \OmPXP{n}(\ast \famX) \Bigr\rbrack \decal{n},
\end{diagram}
\end{equation}
represents the direct image of $\OPX(\ast \famX)$ in the derived category of
filtered holonomic complexes on $P$. Note that each term in the complex is naturally
a $\Dmod$-module on $P$; moreover, the maps in the complex are $\OP$-linear.
We conclude that the $j$-th cohomology sheaf $\Nmod^j$ of the complex,
\[
	\Nmod^j = H^j \prPl \DR_{P \times X / P} \bigl( \OPX(\ast \famX) \bigr),
\]
is the holonomic $\Dmod$-module underlying the mixed Hodge module $H^j \ql \QUH
\decal{\dfamU} = H^j \prPl \gl \QUH \decal{\dfamU}$. As in Lemma~\ref{lem:QUH-van},
$\Nmod^j = 0$ outside the range $-n \leq j \leq 0$.

Both the complex in \eqref{eq:dRpr} and its cohomology sheaves $\Nmod^j$ are
naturally filtered. Indeed, the pole-order filtration on $\OPX(\ast \famX)$
defines a filtration on the relative de Rham complex by subcomplexes $F_k \DR_{P
\times X / P} \bigl( \OPX(\ast \famX) \bigr)$, equal to
\begin{diagram}
	\Bigl\lbrack F_k &\rTo& \OmPXP{1} \tensor F_{k+1} &\rToDots& \OmPXP{n} \tensor
		F_{k+n} \Bigr\rbrack \decal{n},
\end{diagram}
where $F_k = F_k \OPX(\ast \famX)$ is as in \eqref{eq:pole}. The filtration on the
complex in \eqref{eq:dRpr} is then induced by the subcomplex $\derR \prPl F_k
\DR_{P \times X / P} \bigl( \OPX(\ast \famX) \bigr)$; note that $\prPl$ means the
derived functor here, as the individual sheaves in the complex are not necessarily
acyclic. Since the morphism $\prP$ is projective, the filtration is strict by a
result of Saito's \cite{SaitoMHM}*{Theorem~2.14}; this means that the cohomology
sheaves of the subcomplex inject into those of the whole complex. The filtration on
each of the $\Dmod$-modules $\Nmod^j$ is the induced filtration,
\[
	F_k \Nmod^j = H^j \prPl F_k \DR_{P \times X / P} \bigl( \OPX(\ast \famX) \bigr),
\]
where $\prPl$ is again the derived functor. Observe that each filtered $\Dmod$-module
$\bigl( \Nmod^j, F \bigr)$ is regular holonomic, because it underlies a mixed Hodge module.

\begin{proposition} \label{prop:Dmod-Nj}
Let $F_k = F_k \OPX(\ast \famX)$, and consider the subcomplex
\begin{equation} \label{eq:dRprsub}
\begin{diagram}
 	\Bigl\lbrack 
		\prPl F_k &\rTo& \prPl \OmPXP{1} \tensor F_{k+1}  &\rToDots& 
			\prPl \OmPXP{n} \tensor F_{k+n} \Bigr\rbrack \decal{n}
\end{diagram}
\end{equation}
of the complex in \eqref{eq:dRpr}. Then for $k \gg 0$, the coherent sheaf $F_k
\Nmod^j$ is the $j$-th cohomology sheaf of the complex in \eqref{eq:dRprsub}. The
same is true for arbitrary $k \in \ZZ$, provided that the line bundle $\OX(1)$
satisfies the condition
\begin{equation} \label{eq:ample}
	\text{$H^q \bigl( X, \OmX{p}(k) \bigr) = 0$ for $q > 0$ and $k > 0$}.
\end{equation}
\end{proposition}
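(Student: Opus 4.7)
The plan is to reduce the proposition to an acyclicity statement: each term of the subcomplex (\ref{eq:dRprsub}) should be $\prPl$-acyclic under the stated hypotheses. Once this is established, the canonical map from (\ref{eq:dRprsub}) to $\derR \prPl F_k \DR_{P \times X / P}\bigl(\OPX(\ast \famX)\bigr)$ becomes a quasi-isomorphism, so its $j$-th cohomology sheaf coincides with $F_k \Nmod^j$ by definition.

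To make this concrete, I would first rewrite the individual terms. Using $\OmPXP{p} \simeq \prXu \OmX{p}$ together with $\OPX(m \famX) \simeq \prPu \OP(m) \tensor \prXu \OX(m)$, whenever $k+p \geq 0$ one has
\[
	\OmPXP{p} \tensor F_{k+p} \simeq \prXu\bigl( \OmX{p}(k+p+1) \bigr) \tensor \prPu \OP(k+p+1);
\]
for $k+p < 0$ the factor $F_{k+p}$ is zero and the term vanishes identically. By flat base change for the projection $\prP$ combined with the projection formula,
\[
	R^q \prPl \bigl( \OmPXP{p} \tensor F_{k+p} \bigr) \simeq H^q\bigl( X, \OmX{p}(k+p+1) \bigr) \tensor \OP(k+p+1).
\]
For $k \gg 0$, Serre vanishing applied to $\OX(1)$ kills these groups for every $q > 0$ and every $0 \leq p \leq n$ at once; under the stronger hypothesis (\ref{eq:ample}), the same vanishing holds for arbitrary $k \in \ZZ$, since every nontrivial term automatically satisfies $k+p+1 \geq 1$, which is precisely the regime covered by the assumption.

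The main point that I would want to double-check is the compatibility between Saito's strictness theorem, the homological shift $\decal{n}$ in the de Rham complex, and the indexing $F_{k+p} = \OPX((k+p+1)\famX)$. Strictness ensures that $F_k \Nmod^j$ is computed by the derived pushforward of the filtered subcomplex and that this injects into $\Nmod^j$ as a coherent subsheaf; once acyclicity collapses the derived pushforward to the naive one, we recover exactly the complex (\ref{eq:dRprsub}). I do not expect a genuine obstacle here, only careful bookkeeping of pole orders and shifts.
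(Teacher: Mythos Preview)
Your proposal is correct and follows precisely the same approach as the paper: the paper's proof consists of two sentences observing that each sheaf in the subcomplex is $\prPl$-acyclic for $k \gg 0$, and likewise under condition~\eqref{eq:ample}. Your write-up simply unpacks this acyclicity by identifying each term with $H^q\bigl(X,\OmX{p}(k+p+1)\bigr)\tensor \OP(k+p+1)$ and invoking Serre vanishing or~\eqref{eq:ample}, which is exactly the intended argument.
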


\begin{proof}
For $k \gg 0$, each sheaf in the complex is acyclic for the functor $\prPl$. The same
is true for arbitrary $k \in \ZZ$ under the displayed condition on $\OX(1)$.
\end{proof}

The most interesting among those $\Dmod$-modules is 
\begin{equation} \label{eq:N0}
	\Nmod^0 = \frac{\prPl \OmPXP{n}(\ast \famX)}
		{\dPXP \Bigl( \prPl \OmPXP{n-1}(\ast \famX) \Bigr)},
\end{equation}
which is filtered by the coherent subsheaves $F_k \Nmod^0$. For $k \gg 0$, or in
general if the condition in \eqref{eq:ample} is satisfied, these sheaves are given by
the formula
\begin{equation} \label{eq:HFN}
	F_k \Nmod^0 = \frac{\prPl \OmPXP{n} \bigl( (n+k+1) \famX \bigr)}
		{\dPXP \Bigl( \prPl \OmPXP{n-1} \bigl( (n+k) \famX) \bigr) \Bigr)}.
\end{equation}

Now let $(\Mvan, F)$ be the filtered $\Dmod$-module underlying $\jlsl \Vvan$; it is
the minimal extension of the flat vector bundle $\OPsm \tensor_{\CC} \Rvan{n-1}
\pisml \CC$. Temporarily, we also introduce the filtered $\Dmod$-module $(\Rmod, F)$, 
underlying the error term $R = \bigoplus_{j > 0} E_{0,j}$ in \eqref{eq:kzero}.

The following theorem summarizes the results of this and the previous section.
To explain the notation, we remark that for a mixed Hodge structure $H$,
the $\Dmod$-module $\HC \tensor \OP$ underlying $\aPu H = H \tensor \QPH
\decal{d}$ has as its filtration
\begin{equation} \label{eq:filt}
	F_k \bigl( \HC \tensor \OP \bigr) = \bigl( F_k \HC \bigr) \tensor \OP
			= \bigl( F^{-k} \HC \bigr) \tensor \OP,
\end{equation}
induced from the Hodge filtration on $H$. 

\begin{theorem} \label{thm:Dmod1}
For any $j < 0$, we have an isomorphism of filtered $\Dmod$-modules
\[
	\Nmod^j \simeq \frac{H^{n+j}(X, \CC)}{H^{n+j-2}(X, \CC)} \tensor \OP,
\]
where the right-hand side has the filtration described in \eqref{eq:filt}.
Moreover, we have a short exact sequence of filtered $\Dmod$-modules
\begin{diagram}
	0 &\rTo& \Hprim{n}(X, \CC) \tensor \OP &\rTo& \Nmod^0 &\rTo& \Mvan(-1) \oplus \Rmod
		&\rTo& 0,
\end{diagram}
strict with respect to the filtrations. 
\end{theorem}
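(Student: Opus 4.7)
The plan is to deduce the theorem directly from the mixed Hodge module identifications \eqref{eq:kneg} and \eqref{eq:kzero} established in \S~\ref{subsec:pieces}, by passing to the underlying filtered $\Dmod$-modules. The crucial input is the general principle that the forgetful functor from $\MHM(P)$ to the category of filtered holonomic $\Dmod$-modules on $P$ is exact and faithful, and that every morphism in $\MHM(P)$ is automatically strict with respect to the Hodge filtration. Consequently every isomorphism or short exact sequence in $\MHM(P)$ descends to a filtered isomorphism, respectively to a strict short exact sequence, of filtered $\Dmod$-modules.

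For $j < 0$, I would first recall from \S~\ref{subsec:underlyingD} that $\Nmod^j$ is by construction the filtered $\Dmod$-module underlying $H^j \ql \QUH \decal{\dfamU}$. The isomorphism \eqref{eq:kneg} then reduces the problem to identifying the filtered $\Dmod$-module that underlies $H \tensor \QPH \decal{d} = \aPu H$ for a mixed Hodge structure $H$, namely $\HC \tensor \OP$ equipped with the filtration \eqref{eq:filt}. Applying this with $H$ equal to the MHS quotient $H^{n+j}(X)/H^{n+j-2}(X)(-1)$ yields the stated formula; the Tate twist on $H^{n+j-2}(X)$ has the sole effect of shifting the Hodge filtration on the quotient by one, which is already incorporated into the convention \eqref{eq:filt}.

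For $j = 0$, I would apply the same principle to the short exact sequence \eqref{eq:kzero} in $\MHM(P)$. The underlying filtered $\Dmod$-module of $\HXprim \tensor \QPH \decal{d}$ is $\Hprim{n}(X,\CC) \tensor \OP$ with the filtration \eqref{eq:filt}; the underlying filtered $\Dmod$-module of $\jlsl \Vvan(-1)$ is $\Mvan(-1)$ by the very definition of $\Mvan$ given in \S~\ref{subsec:underlyingD}, combined with the Tate-twist convention recalled in \S~\ref{subsec:conventions}; and the underlying filtered $\Dmod$-module of $R$ is $\Rmod$ by definition. Strictness of the resulting sequence is automatic from the strictness property for morphisms in $\MHM(P)$ recalled above.

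No substantive obstacle arises at this stage: all the genuine work was carried out in \S~\ref{subsec:pieces} at the level of mixed Hodge modules, and the present theorem is essentially a repackaging of those computations on the underlying filtered $\Dmod$-modules. The only point requiring care is the correct bookkeeping of Tate twists through the filtration convention \eqref{eq:filt}, which is entirely routine.
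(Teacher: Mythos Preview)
Your proposal is correct and follows essentially the same approach as the paper: both deduce the statement by passing to underlying filtered $\Dmod$-modules in the mixed Hodge module isomorphism \eqref{eq:kneg} and the short exact sequence \eqref{eq:kzero}, invoking strictness of morphisms in $\MHM(P)$ for the second part. The paper's own proof is slightly terser but makes exactly the same moves.
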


\begin{proof}
The first assertion is an immediate consequence of the isomorphism in
\eqref{eq:kneg}. Indeed, we have just shown that the filtered $\Dmod$-module underlying the
left-hand side of \eqref{eq:kneg} is $\Nmod^j$, whereas the one
underlying the right-hand side is evidently $H^{n+j}(X, \CC) \big/ H^{n+j-2}(X, \CC)
\tensor \OP$. The second assertion follows in the same way from the exact sequence in
\eqref{eq:kzero}, recalling that morphisms between mixed Hodge modules strictly
preserve the Hodge filtrations on the underlying $\Dmod$-modules.
\end{proof}

In \subsecref{subsec:ample}, we show $R = 0$ if and only if the vanishing cohomology
of the hypersurfaces is nontrivial; this means that precisely one of $\Rmod$ and
$\Mvan$ is nonzero. After incorporating this result, we arrive at the following
description of the filtered $\Dmod$-module $\Mmod$.

\begin{corollary} \label{cor:Dmod2}
Suppose that the vanishing cohomology of the hypersurfaces is nontrivial; then we
have $\Mvan = \Mmod$. If moreover the condition in \eqref{eq:ample} is satisfied,
then $F_k \Mvan = F_{k+n} \Mmod$, and the exact sequence in Theorem~\ref{thm:Dmod1} becomes
\begin{diagram}
	0 &\rTo& \Hprim{n}(X, \CC) \tensor \OP &\rTo& \Nmod^0 &\rTo& \Mmod(-n-1) &\rTo& 0.
\end{diagram}
\end{corollary}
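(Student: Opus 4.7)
The plan is to deduce everything from Theorem~\ref{thm:Dmod1} together with the upcoming vanishing result of \subsecref{subsec:ample}. First I would invoke the (to-be-proved) fact that $R = 0$, and hence $\Rmod = 0$, precisely when the vanishing cohomology is nontrivial. Under our hypothesis, the exact sequence of Theorem~\ref{thm:Dmod1} therefore collapses to
\[
0 \to \Hprim{n}(X, \CC) \tensor \OP \to \Nmod^0 \to \Mvan(-1) \to 0,
\]
exhibiting $\Mvan$ as a $\Dmod$-module quotient of $\Nmod^0$ with kernel $\Hprim{n}(X,\CC) \tensor \OP$.

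Next I would identify this quotient with $\Mmod$. The residue map $\ResfamXP \colon \Nmod \to \jl \shHO$ factors through the surjection $\Nmod \onto \Nmod^0$, because a relatively $d$-exact meromorphic form in $\OmPXP{n}(\ast \famX)$ gives zero residue at every smooth fiber; this produces a surjection $\Nmod^0 \onto \Mmod$ of $\Dmod$-modules. Both this map and the Saito quotient $\Nmod^0 \onto \Mvan(-1)$ restrict over $\Psm$ to the classical residue $H^n(X \setminus \famX_p, \CC) \to \Hvan{n-1}(\famX_p, \CC)$ from the exact sequence~\eqref{eq:X-D}, whose kernel is exactly $\Hprim{n}(X, \CC)$. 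Since $\Mvan$, as a minimal extension, has no subobject or quotient supported on $P \setminus \Psm$, the two surjections --- agreeing on $\Psm$ --- must coincide globally, and therefore $\Mmod \simeq \Mvan$ as $\Dmod$-modules.

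For the filtration, I would appeal to Proposition~\ref{prop:Dmod-Nj} under the ampleness hypothesis \eqref{eq:ample}, which realizes $F_k \Nmod^0$ as the image of $\prPl \OmPXP{n}\bigl((n+k+1)\famX\bigr)$ modulo relative exact forms for every $k \in \ZZ$. Strictness of Saito's direct image forces $F_{k+1} \Mvan = F_k \Mvan(-1)$ to be the image of $F_k \Nmod^0$ in $\Mvan(-1)$, i.e., the residue of $\prPl \OmPXP{n}\bigl((n+k+1)\famX\bigr)$. By the definition of $F_k \Mmod$ in \subsecref{subsec:Mmod}, this image is precisely $F_{n+k+1} \Mmod$, giving $F_k \Mvan = F_{n+k} \Mmod$. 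Rewriting $\Mvan(-1)$ as $\Mmod(-n-1)$ via the convention $(\Mmod, F)(\ell) = (\Mmod, F_{\bullet - \ell})$ then yields the stated exact sequence.

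The principal obstacle is the identification in the second paragraph: verifying that the abstract $\Dmod$-module map $\Nmod^0 \onto \Mvan(-1)$ coming from Saito's distinguished triangle \eqref{eq:MHM-seq} really is the Poincar\'e residue. I expect this to follow by tracing the connecting morphism in the attaching triangle $\il \ius \to \id \to \gl \gu$, which at a smooth fiber is by construction the residue isomorphism (up to the Tate twist by $(-1)$ already built into $\ius$); failing a clean direct verification, one can argue indirectly by observing that any $\Dmod$-linear surjection from $\Nmod^0$ onto the simple module $\Mvan(-1)$ is determined up to a scalar by its behaviour over $\Psm$, and the scalar is fixed by a local residue computation.
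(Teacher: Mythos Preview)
Your proposal is correct and follows essentially the same approach as the paper: invoke the vanishing $\Rmod = 0$ from \subsecref{subsec:ample}, identify the Saito surjection $\Nmod^0 \onto \Mvan(-1)$ with the residue map over $\Psm$, and read off the filtration shift from strictness together with the formula \eqref{eq:HFN} for $F_k \Nmod^0$. The paper argues the identification slightly more directly---it uses that $\Mvan \hookrightarrow \jl \shHO$ (by the intermediate extension property), so the composite $\Nmod^0 \onto \Mvan \hookrightarrow \jl \shHO$ is a map into $\jl \shHO$, hence determined by its restriction to $\Psm$, where it is ``clearly'' the fiberwise residue; thus $\Mvan$ and $\Mmod$ are literally the same image inside $\jl \shHO$---whereas you compare two abstract surjections via the minimal extension property, but the content is the same and your flagged ``principal obstacle'' is exactly the step the paper dispatches with the word ``clearly.''
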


\begin{proof}
By construction, the restriction of $\Mvan$ to $\Psm$ is the flat vector bundle
$\shHO$, and so $\Mvan$ is a subsheaf of $\jl \shHO$ by the definition of the
intermediate extension functor. Since $\Rmod = 0$ by Proposition~\ref{prop:R}, the
$\Dmod$-module $\Mvan$ is a quotient of $\Nmod^0$, and over $\Psm$, the resulting map
from $\Nmod^0 \restr{\Psm}$ to $\shHO$ is clearly the fiberwise residue map. We
conclude that $\Mvan = \Mmod$. As for the filtrations, note that $F_k \Mvan$ is a quotient of
$F_{k-1} \Nmod^0$, and hence of $\prPl \OmPXP{n} \bigl( (k+n) \famX \bigr)$.
\end{proof}

\subsection{Vanishing of higher cohomology}
\label{subsec:vanishing}

In this section, we use the direct image of the relative de Rham complex in
\eqref{eq:dRpr} to construct something like a locally free resolution for the sheaves
$F_k \Nmod^0$, and then use it to prove a vanishing theorem for their higher
cohomology groups. Fix $k \in \ZZ$; we assume either that \eqref{eq:ample} is
satisfied, or that $k \gg 0$. Consider the subcomplex 
\[	
	\Emod_k^{\bullet} = \prPl F_{k-n-1} \DR_{P \times X / P} \bigl( \OPX(\ast \famX) \bigr)
\]
of the direct image of the relative de Rham complex as in \eqref{eq:dRprsub}. It has
$\OP$-linear differentials, and is supported in degrees $-n, \dotsc, 0$.
The individual sheaves in the complex are easily found to be
\begin{align*}
	\Emod_k^i &= \prPl \Bigl( \OmPXP{n+i} \tensor F_{k+i-1} \OPX(\ast \famX) \Bigr) \\
		&=	\begin{cases}
		H^0 \bigl( X, \OmX{n+i}(k+i) \bigr) \tensor 
			\OP(k+i) & \text{if $k+i \geq 1$,} \\
		0 & \text{otherwise.}
	\end{cases}
\end{align*}
The discussion in \subsecref{subsec:underlyingD} shows that the cohomology sheaves of
the complex are precisely the coherent sheaves $F_{k-n-1} \Nmod^i$.
Theorem~\ref{thm:Dmod1} describes these sheaves for $i < 0$,
\begin{equation} \label{eq:FsNp}
	F_{k-n-1} \Nmod^i \simeq
		\frac{F^{n+1-k} H^{n+i}(X, \CC)}{F^{n-k} H^{n+i-2}(X, \CC)} \tensor \OP.
\end{equation}
This is almost as good as having a locally free resolution, and easily allows us to
prove the following vanishing theorem.

\begin{theorem} \label{thm:vanishing}
Suppose that the line bundle $\OX(1)$ satisfies the condition in \eqref{eq:ample}, or
that $k \gg 0$. Then we have $H^i \bigl( P, \OmP{p} \tensor F_k \Nmod^0 \bigr) = 0$
for every $i \geq \max(p-1, 0)$.  
\end{theorem}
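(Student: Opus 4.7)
The plan is to apply two hypercohomology spectral sequences to the complex
\[
	\Emod_{k+n+1}^\bullet \;=\; \prPl F_k \DR_{P \times X / P} \bigl( \OPX(\ast \famX) \bigr),
\]
tensored with the locally free sheaf $\OmP{p}$. Recall that this complex sits in cohomological degrees $-n, \dotsc, 0$, with terms
\[
	\Emod_{k+n+1}^i \;=\; H^0 \bigl( X, \OmX{n+i}(k+n+i+1) \bigr) \tensor \OP(k+n+i+1),
\]
and cohomology sheaves $F_k \Nmod^i$ for $-n \leq i \leq 0$, where the ampleness hypothesis on $\OX(1)$ (or the assumption $k \gg 0$) guarantees that the relevant higher direct images vanish.

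The first spectral sequence is the one associated to the stupid filtration: on the $E_1$-page,
\[
	E_1^{r, s} \;=\; H^s \bigl( P, \OmP{p} \tensor \Emod_{k+n+1}^r \bigr)
\]
splits as a sum of groups $H^s \bigl( \PP^d, \OmP{p}(\ell) \bigr)$ with $\ell = k+n+r+1 \geq 1$. By Bott's formula these vanish for every $s > 0$, so only the row $s = 0$ survives and the complex being concentrated in degrees $\leq 0$ yields $\HH^N \bigl( P, \OmP{p} \tensor \Emod_{k+n+1}^\bullet \bigr) = 0$ for every $N > 0$. The second spectral sequence is the one associated to the filtration by truncations:
\[
	E_2^{r, s} \;=\; H^r \bigl( P, \OmP{p} \tensor F_k \Nmod^s \bigr) \;\Longrightarrow\; \HH^{r+s}.
\]
For $s < 0$, Theorem~\ref{thm:Dmod1} identifies $F_k \Nmod^s$ with a trivial vector bundle $V_s \tensor \OP$, and Bott's formula for $\OmP{p}$ gives $E_2^{r, s} = V_s$ when $r = p$ and $0$ otherwise. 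So the nonzero part of the $E_2$-page forms an \emph{L-shape}: the whole row $s = 0$, together with the column $r = p$ for $s < 0$.

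The final step is an analysis of the higher differentials $d_j \colon E_j^{r, s} \to E_j^{r+j, s-j+1}$ for $j \geq 2$. An outgoing differential from the row $s = 0$ can land in the nonzero region only if $r + j = p$, which forces $r \leq p - 2$; and no differential can land in the row $s = 0$, because its source would have to lie at $s = j - 1 > 0$. Consequently $E_\infty^{r, 0} = E_2^{r, 0}$ for every $r \geq p - 1$, and since this subquotient must fit inside $\HH^r$, the vanishing $\HH^r = 0$ then forces $H^r \bigl( P, \OmP{p} \tensor F_k \Nmod^0 \bigr) = 0$. This gives the theorem in the stated range. The main obstacle I expect is carrying out the bookkeeping at the ``corner'' of the $L$, i.e.\ confirming that no higher differential secretly connects $E_\bullet^{p-1, 0}$ to the column $r = p$ (the shortest potentially worrying differential has $j = 1$, which is not an $E_r$-differential for $r \geq 2$), and checking the edge cases where either $\max(p-1, 0) = 0$ or the column of trivial bundles is empty (when $p > n$); in both situations one has to argue directly that the $L$ degenerates to its horizontal arm and conclude from $\HH^N = 0$ for $N > 0$.
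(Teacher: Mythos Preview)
Your proposal is correct and follows essentially the same route as the paper: tensor the pushed-forward filtered relative de Rham complex by $\OmP{p}$, use the first hypercohomology spectral sequence plus Bott vanishing to kill $\HH^{>0}$, and then read off the vanishing of $E_2^{r,0}$ from the L-shape in the second spectral sequence. Your differential bookkeeping is in fact more explicit than the paper's; the only cosmetic difference is the index shift (you work with $\Emod_{k+n+1}^\bullet$ so that the cohomology sheaves are literally $F_k\Nmod^\bullet$, whereas the paper uses $\Emod_k^\bullet$ with cohomology $F_{k-n-1}\Nmod^\bullet$ and appeals to the fact that $k$ is arbitrary).
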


\begin{proof}
The terms of the complex $\OmP{p} \tensor \Emod_k^{\bullet}$ are
direct sums of sheaves of the form $\OmP{p}(m)$ with $m \geq 1$. In the
first hypercohomology spectral sequence,
\[
	\primeE_1^{i,j} = H^j(P, \OmP{p} \tensor \Emod_k^i) 
		\Longrightarrow \HH^{i+j} \bigl( \OmP{p} \tensor \Emod_k^{\bullet} \bigr),
\]
we have $\primeE_1^{i,j} = 0$ for $j > 0$ by Bott's vanishing theorem. All but one
row is therefore zero, and so the spectral sequence degenerates. Since moreover
$\primeE_1^{i,0} = 0$ if $i > 0$, it follows that $\HH^i \bigl( \OmP{p} \tensor
\Emod_k^{\bullet} \bigr) = 0$ whenever $i > 0$.

To relate this to the cohomology of $F_{k-n-1} \Nmod^0$, we use the second
hypercohomology spectral sequence,
\[
	\pprimeE_2^{i,j} = H^i \bigl( P, \OmP{p} \tensor F_{k-n-1} \Nmod^j \bigr) 
		\Longrightarrow \HH^{i+j} \bigl( \OmP{p} \tensor \Emod_k^{\bullet} \bigr),
\]
remembering that $F_{k-n-1} \Nmod^j$ is the $j$-th cohomology sheaf of the complex
$\Emod_k^{\bullet}$. Now $\OmP{p} \tensor F_{k-n-1} \Nmod^j$ is a direct sum of
copies of $\OmP{p}$ for $j < 0$, and therefore $\pprimeE_2^{i,j} = 0$ unless $i = p$
or $j = 0$.  From the vanishing of the hypercohomology in positive degrees, we now conclude that
\[
	H^i \bigl( P, \OmP{p} \tensor F_{k-n-1} \Nmod^0 \bigr) = 
		\pprimeE_2^{i,0} = 0
\]
for $i \geq \max(p-1, 0)$, as asserted.
\end{proof}

On the other hand, Corollary~\ref{cor:Dmod2} relates $\Nmod^0$ and $\Mmod$ through
the short exact sequence
\begin{diagram}
	0 &\rTo& F^{n+1-k} \Hprim{n}(X, \CC) \tensor \OP &\rTo& F_{k-n-1} \Nmod^0 &\rTo& 
		F_k \Mmod &\rTo& 0.
\end{diagram}
The following vanishing theorem for $F_k \Mmod$ is an immediate consequence (note if
the vanishing cohomology of the hypersurfaces is trivial, then $\Mmod = 0$, and so
the result holds by default).

\begin{corollary} \label{cor:vanishing}
If the condition in \eqref{eq:ample} is satisfied, then
$H^i \bigl( P, \OmP{p} \tensor F_k \Mmod) = 0$ for every $i \geq \max(p,0)$.
\end{corollary}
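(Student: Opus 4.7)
The plan is to deduce this from Theorem~\ref{thm:vanishing} by means of the short exact sequence
\[
0 \to F^{n+1-k} \Hprim{n}(X, \CC) \tensor \OP \to F_{k-n-1} \Nmod^0 \to F_k \Mmod \to 0
\]
that is displayed immediately before the statement (a consequence of Corollary~\ref{cor:Dmod2} together with strictness). Since $\OmP{p}$ is locally free, tensoring with it preserves exactness, so this yields a short exact sequence of coherent sheaves on $P$ and hence a long exact sequence in cohomology.

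In that long exact sequence, the term adjacent to $H^i(P, \OmP{p} \tensor F_k \Mmod)$ on the left is $H^i(P, \OmP{p} \tensor F_{k-n-1} \Nmod^0)$, which vanishes for $i \geq \max(p-1, 0)$ by Theorem~\ref{thm:vanishing}; in particular it vanishes for the range $i \geq \max(p, 0)$ that we care about. The term on the right is $H^{i+1}\bigl(P, \OmP{p} \tensor F^{n+1-k} \Hprim{n}(X, \CC) \tensor \OP \bigr)$, which is simply a direct sum of copies of $H^{i+1}(P, \OmP{p})$. On projective space $P$ one has $H^q(P, \OmP{p}) = 0$ for $q \neq p$, so this term vanishes whenever $i+1 \neq p$, and in particular whenever $i \geq p$.

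Combining these two vanishing statements, the long exact sequence forces $H^i(P, \OmP{p} \tensor F_k \Mmod) = 0$ for every $i \geq \max(p, 0)$, as desired. The only case to check separately is when the vanishing cohomology is trivial, but then $\Mmod = 0$ and the assertion is automatic. There is no real obstacle here: all of the genuine work has already been carried out in Theorem~\ref{thm:vanishing}, and the corollary is essentially a bookkeeping exercise using the long exact sequence plus the Hodge decomposition of projective space.
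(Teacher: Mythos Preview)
Your proof is correct and follows exactly the approach the paper intends: it uses the short exact sequence displayed just before the corollary (coming from Corollary~\ref{cor:Dmod2}) together with Theorem~\ref{thm:vanishing} and the Hodge cohomology of projective space, and also notes the trivial case $\Mmod = 0$ just as the paper does. The paper simply calls this an ``immediate consequence'' without spelling out the long exact sequence argument, so your version is a faithful expansion of the same idea.
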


The same spectral sequence argument can also be used to compute spaces of global
sections. To state the result, we introduce the notation
\[
	W_k^p = H^0 \bigl( P \times X, \OmPXP{p}(k \famX) \bigr) \simeq
		H^0 \bigl( X, \OmX{p}(k) \bigr) \tensor H^0 \bigl( P, \OP(k) \bigr),
\]
noting that the relative differential $\dPXP$ maps $W_k^p$ to $W_{k+1}^{p+1}$.

\begin{corollary} \label{cor:glob-sect}
For $k \gg 0$, or for any $k \geq 1$ assuming \eqref{eq:ample}, we have
\[
	H^0 \bigl( P, F_{k-n-1} \Nmod^0 \bigr) \simeq
		\frac{W_k^n}{\dPXP W_{k-1}^{n-1}}.
\]
If the line bundle $\OX(1)$ is sufficiently ample, then similarly
\[
	H^0 \bigl( P, F_k \Mmod \bigr) \simeq
		\frac{W_k^n}{\dPXP W_{k-1}^{n-1} + F^{n+1-k} \Hprim{n}(X, \CC)}.
\]
\end{corollary}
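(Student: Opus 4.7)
The plan is to mimic the two-spectral-sequence argument from the proof of Theorem~\ref{thm:vanishing}, now in cohomological degree $0$. The complex
\[
	\Emod_k^{\bullet} = \prPl F_{k-n-1} \DR_{P \times X / P} \bigl( \OPX(\ast \famX) \bigr)
\]
introduced in \subsecref{subsec:vanishing} has cohomology sheaves $F_{k-n-1} \Nmod^j$, and I would compute $\HH^0 \bigl( \Emod_k^{\bullet} \bigr)$ in two different ways and compare.

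First, the first hypercohomology spectral sequence
\[
	\primeE_1^{i,j} = H^j \bigl( P, \Emod_k^i \bigr) \Longrightarrow
		\HH^{i+j} \bigl( \Emod_k^{\bullet} \bigr)
\]
degenerates on the row $j = 0$, because each $\Emod_k^i$ is a direct sum of copies of $\OP(k+i)$ with $k+i \geq 1$ and so has no higher cohomology on $P$. Since $H^0 \bigl( P, \Emod_k^i \bigr) = W_{k+i}^{n+i}$, the last map in the associated complex of global sections is
\begin{diagram}
	W_{k-1}^{n-1} &\rTo^{\dPXP}& W_k^n,
\end{diagram}
giving $\HH^0 \bigl( \Emod_k^{\bullet} \bigr) \simeq W_k^n \big/ \dPXP W_{k-1}^{n-1}$.

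Second, the spectral sequence
\[
	\pprimeE_2^{i,j} = H^i \bigl( P, F_{k-n-1} \Nmod^j \bigr) \Longrightarrow
		\HH^{i+j} \bigl( \Emod_k^{\bullet} \bigr)
\]
has $\pprimeE_2^{i,j} = 0$ whenever $j < 0$ and $i > 0$: by \eqref{eq:FsNp}, $F_{k-n-1} \Nmod^j$ for such $j$ is a direct sum of copies of $\OP$, on which $H^{>0}(P, \OP) = 0$. The same vanishing forces the outgoing differentials $d_r \colon \pprimeE_r^{0,0} \to \pprimeE_r^{r, 1-r}$ for $r \geq 2$ to have zero target, while the incoming ones from $\pprimeE_r^{-r, r-1}$ vanish trivially. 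Hence $\pprimeE_\infty^{0,0} = \pprimeE_2^{0,0}$, so $\HH^0 \bigl( \Emod_k^{\bullet} \bigr) \simeq H^0 \bigl( P, F_{k-n-1} \Nmod^0 \bigr)$, and combining with the first computation yields the first formula.

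For the second formula, I would apply $H^0(P, -)$ to the short exact sequence from Corollary~\ref{cor:Dmod2},
\begin{diagram}
	0 &\rTo& F^{n+1-k} \Hprim{n}(X, \CC) \tensor \OP &\rTo&
		F_{k-n-1} \Nmod^0 &\rTo& F_k \Mmod &\rTo& 0.
\end{diagram}
The left-hand term has no $H^1$ on $P$, so the long exact sequence collapses to a short exact sequence on $H^0$; dividing the first formula by the image of $F^{n+1-k} \Hprim{n}(X, \CC)$ delivers the claimed description of $H^0 \bigl( P, F_k \Mmod \bigr)$. The only subtlety in this plan is the spectral sequence bookkeeping that verifies $\pprimeE_\infty^{0,0} = \pprimeE_2^{0,0}$, but as indicated this is automatic from the $\OP$-summand structure of $F_{k-n-1} \Nmod^j$ for $j < 0$; I do not expect any genuine obstacle.
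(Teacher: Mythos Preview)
Your proposal is correct and is exactly what the paper intends: the text preceding Corollary~\ref{cor:glob-sect} says only that ``the same spectral sequence argument can also be used to compute spaces of global sections,'' and your two-spectral-sequence computation of $\HH^0(\Emod_k^{\bullet})$, together with the $H^0$ of the short exact sequence from Corollary~\ref{cor:Dmod2}, carries this out in full.
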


\begin{corollary}
If $\OX(1)$ satisfies the condition in \eqref{eq:ample}, then $F_k \Mmod$ is a
quotient of the ample vector bundle $H^0 \bigl( X, \OmX{n}(k) \bigr) \tensor \OP(k)$,
and therefore globally generated.
\end{corollary}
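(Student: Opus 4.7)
The plan is to observe that this corollary falls out essentially by chasing definitions, together with the explicit formula for $F_k\Nmod$.

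First I would recall from \subsecref{subsec:Mmod} that the filtered piece $F_k \Mmod$ was \emph{defined} as the image of $F_k \Nmod$ under the fiberwise residue map $\ResfamXP \colon \Nmod \to \jl \shHO$. Thus, tautologically, the induced map $F_k \Nmod \to F_k \Mmod$ is a surjection of $\OP$-coherent sheaves; no ampleness hypothesis is needed for this step.

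Next I would use the explicit description of $F_k \Nmod$ from the auxiliary $\Dmod$-module section: since $\OPX(\famX) \simeq \prPu \OP(1) \tensor \prXu \OX(1)$, the projection formula gives
\[
	F_k \Nmod = \prPl \OmPXP{n}(k \famX) \simeq H^0 \bigl( X, \OmX{n}(k) \bigr) \tensor \OP(k)
\]
for $k \geq 1$ (and $F_k \Nmod = 0$ for $k \leq 0$, in which case $F_k \Mmod = 0$ as well and the statement is vacuous). Composing with the surjection from the previous step exhibits $F_k \Mmod$ as a quotient of $H^0 \bigl( X, \OmX{n}(k) \bigr) \tensor \OP(k)$, which is the claim.

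Finally, for the ``therefore globally generated'' part, I would note that for $k \geq 1$ the line bundle $\OP(k)$ is very ample, so $H^0 \bigl( X, \OmX{n}(k) \bigr) \tensor \OP(k)$ is a finite direct sum of copies of a very ample line bundle; in particular it is an ample, globally generated vector bundle. Any quotient of a globally generated coherent sheaf is globally generated, so the same holds for $F_k \Mmod$. There is no real obstacle here: the hypothesis \eqref{eq:ample} plays no essential role in the argument itself and is included mainly for consistency with the preceding corollaries (and to guarantee that $F_k \Mmod$ coincides with the Hodge filtration piece via Griffiths--Green, the setting in which the result is interesting).
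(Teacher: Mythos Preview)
Your proof is correct and in fact more direct than the paper's. The paper argues that $F_k \Mmod$ is a quotient of $F_{k-n-1} \Nmod^0$ (via the short exact sequence in Corollary~\ref{cor:Dmod2}), and that $F_{k-n-1} \Nmod^0$ is in turn a quotient of $\Emod_k^0 = H^0 \bigl( X, \OmX{n}(k) \bigr) \tensor \OP(k)$; both steps genuinely use the hypothesis \eqref{eq:ample}, since they rely on the Hodge-module identification of $(\Mmod, F)$ with $(\Mvan, F)$ and on the explicit formula \eqref{eq:HFN}. You instead go back to the original definition of $F_k \Mmod$ in \subsecref{subsec:Mmod} as the image of $F_k \Nmod$ under $\ResfamXP$, together with the projection-formula computation $F_k \Nmod \simeq H^0 \bigl( X, \OmX{n}(k) \bigr) \tensor \OP(k)$; this bypasses the mixed Hodge module machinery entirely and, as you correctly observe, does not actually require \eqref{eq:ample}. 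The paper's route has the advantage of staying within the framework just developed in \subsecref{subsec:vanishing}, but your argument buys a cleaner statement with a weaker hypothesis.
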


\begin{proof}
$F_k \Mmod$ is a quotient of $F_{k-n-1} \Nmod^0$, which in turn is a quotient of
$\Emod_k^0 = H^0 \bigl( X, \OmX{n}(k) \bigr) \tensor \OP(k)$, at least when $k \geq
1$. This proves the assertion, because $F_k \Mmod = 0$ for $k \leq 0$.
\end{proof}

\subsection{The vanishing of the error term}
\label{subsec:ample}

Recall that the error term $R$ in \eqref{eq:kzero} is the direct sum of the
Hodge modules $E_{0,j}$ with $j > 0$. In \cite{BFNP}, it was shown that $R = 0$ once
the line bundle $\OX(1)$ is sufficiently ample, and that $E_{0,1} = 0$ precisely when
the vanishing cohomology of the hypersurfaces is nontrivial. We improve this to a
necessary and sufficient condition for the vanishing of the Hodge module $R$.

\begin{proposition} \label{prop:R}
We have $E_{0,j} = 0$ for every $j \geq 2$. If the vanishing cohomology of the
hypersurfaces is nontrivial, then $E_{0,j} = 0$ for every $j > 0$; in particular, the
error term $R = 0$ in \eqref{eq:kzero} is then zero.
\end{proposition}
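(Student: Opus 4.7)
The plan is to analyze the characteristic sheaf of the $\Dmod$-module $\Rmod$ underlying $R$, by exploiting that $\Rmod$ appears as a direct summand of a quotient of $\Nmod^0$, which is itself a filtered quotient of $\Nmod = \prPl \OmPXP{n}(\ast \famX)$ from \subsecref{subsec:universal}. Combining this with the strict exact sequence of Theorem~\ref{thm:Dmod1},
\[
	0 \to \Hprim{n}(X, \CC) \tensor \OP \to \Nmod^0 \to \Mvan(-1) \oplus \Rmod \to 0,
\]
and the vanishing of the characteristic sheaf on a locally free $\OP$-module, I obtain a surjection
\[
	\shC(\Nmod, F) = \phi^{\ast} \OmX{n} \twoheadrightarrow \shC(\Mvan(-1)) \oplus \shC(\Rmod)
\]
of coherent sheaves on $\PP(\Theta_P)$. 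Since the source is a line bundle on $\famX$ by Lemma~\ref{lem:charN}, the target is a cyclic $\OfamX$-module: its multiplicity along each irreducible component is at most one, and any non-trivial direct-sum decomposition forces the summands to have pairwise disjoint supports.

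Assuming $\Vvan \neq 0$, Theorem~\ref{thm:B} identifies $\shC(\Mvan, F) = \psi^{\ast} \omega_X$ as a line bundle on $\famY$, so the cyclicity forces $\supp \shC(\Rmod) \cap \famY = \emptyset$. On the other hand, $\famY \to \dualX$ is birational over the smooth locus of $\dualX$, and hence for any irreducible component of $\dualX$ the projectivized conormal variety coincides with the corresponding component of $\famY$ inside $\PP(\Theta_P)$. Any simple summand $E_{0,1}$ of $R$ would therefore place a non-zero piece of $\shC(\Rmod)$ along $\famY$, a contradiction; combined with the first assertion (for $j \geq 2$), this yields $R = 0$.

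For the first assertion, Bernstein's inequality together with $\Ch(\Rmod) \subseteq \Ch(\Nmod) = C(\famX)$ forces each irreducible component of $\Ch(\Rmod)$ to be a conormal variety $T^{\ast}_Z P$ for some irreducible $Z \subseteq \dualX$ with $\PP(T^{\ast}_Z P) \subseteq \famX$. Fiberwise over each smooth point $p \in Z$, this means a linear $\PP^{d - \dim Z - 1} \subseteq \PP(T^{\ast}_p P) \cong H_p$ lies inside $\famX_p = H_p \cap X$. The main obstacle -- the heart of the proof -- is to rule out such a family of positive-dimensional linear subspaces when $\codim Z \geq 2$. My plan is to exploit the projective-bundle description $\famX \cong \PP(i^{\ast} \Theta_Q)$ together with the normal bundle sequence~\eqref{eq:normal} to show that the only $(d-1)$-dimensional Legendrian subvariety of $\PP(\Theta_P)$ contained in $\famX$ and lying over a proper subvariety of $P$ is $\famY$ itself, forcing $\codim Z = 1$.
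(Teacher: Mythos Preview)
Your proposal has a genuine circularity and, separately, works harder than necessary because it uses the wrong characteristic sheaf.

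\textbf{Circularity.} You invoke Theorem~\ref{thm:B} (equivalently Corollary~\ref{cor:charM}) to identify $\shC(\Mvan,F)$ with $\psi^{\ast}\omega_X$ on $\famY$. In the paper, that statement is a \emph{consequence} of Proposition~\ref{prop:R}: its proof passes through Corollary~\ref{cor:Dmod2}, which already assumes $\Rmod=0$. So you cannot use it here.

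\textbf{Using $\Nmod$ instead of $\Nmod^0$.} Your surjection $\shC(\Nmod,F)=\phi^{\ast}\OmX{n}\twoheadrightarrow \shC(\Mvan(-1))\oplus\shC(\Rmod)$ only tells you that the supports lie in $\famX$, which is why you are forced into the (unfinished) Legendrian-subvariety argument for $j\geq 2$. The paper instead uses Lemma~\ref{lem:charN0}, which computes $\shC(\Nmod^0,F)\simeq\psi^{\ast}\OmX{n}\tensor\OfamY(n+1)$: a line bundle on $\famY$, not $\famX$. Since $\Mvan\oplus\Rmod$ differs from $\Nmod^0$ only by the vector bundle $\Hprim{n}(X,\CC)\tensor\OP$ (Theorem~\ref{thm:Dmod1}), the characteristic variety of $\Mvan\oplus\Rmod$ has exactly two irreducible components: the zero section and the cone over $\famY$, the latter with multiplicity one. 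Any simple summand $E_{0,j}$ with strict support $Z$ of codimension $j$ contributes the conormal variety $T^{\ast}_Z P$; this must equal the cone over $\famY$, forcing $Z=\dualX$ and $j=\codim\dualX\leq 1$. So $E_{0,j}=0$ for $j\geq 2$ falls out immediately, with no need to classify linear subspaces of the $\famX_p$.

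\textbf{The case $j=1$.} Your argument here also relies on Theorem~\ref{thm:B} and hence collapses. The paper's route is different and avoids the circularity: by additivity of multiplicity along the cone over $\famY$ (which is one), at most one of $\Mvan$, $\Rmod$ can be nonzero there. If $\Rmod\neq 0$, then $\Ch(\Mvan)$ is contained in the zero section, so $\Mvan$ is a flat vector bundle on $P$, hence trivial (as $P$ is simply connected). But then the local system $\Rvan{n-1}\pisml\CC$ is trivial, contradicting the irreducibility of the monodromy action on vanishing cohomology (which forces $H^0(\Psm,\Rvan{n-1}\pisml\CC)=0$ once $\Vvan\neq 0$). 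Thus $\Rmod=0$.

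In short: replace Lemma~\ref{lem:charN} by Lemma~\ref{lem:charN0} and you get the $j\geq 2$ statement for free; for $j=1$, drop the appeal to Theorem~\ref{thm:B} and use instead the multiplicity-one argument together with the irreducibility of the monodromy.
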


\begin{note}
By recent work of Dimca and Saito \cite{DS}*{Theorem~6}, it is known that the
vanishing cohomology is nontrivial as soon as $\OX(1)$ is the third (or, in most
cases, second) power of a very ample line bundle.
\end{note}

The proof of the proposition is based on computing the characteristic variety of the
$\Dmod$-module $\Nmod^0$; we will show that it has exactly two components, one of
which is the cone over the set of singularities $\famY \subseteq \famX$, and
determine the multiplicities. Our starting point is the following simple lemma.

\begin{lemma} \label{lem:Koszul}
As usual, let $\phi \colon \famX \to X$ and $\psi \colon \famY \to X$ be the second
projections. For every $k \in \ZZ$, the Koszul-type complex
\begin{diagram}
	\OfamX(k-n) &\rTo& \phiu \OmX{1} \tensor \OfamX(k-n+1) &\rToDots&
		\phiu \OmX{n} \tensor \OfamX(k).
\end{diagram}
is a locally free resolution on $\famX$ for the sheaf $\psiu \OmX{n} \tensor
\OfamY(k)$.
The differential in this complex
is given by the rule $\beta \mapsto \dPXP \sfamX \wedge \beta$, where $\sfamX$
is the section of $\OPX(1)$ defining $\famX \subseteq P \times X$.
\end{lemma}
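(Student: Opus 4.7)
The plan is to recognize the complex as a (twisted) Koszul complex for a natural section of a rank-$n$ bundle on $\famX$, and then verify that this section is regular so that the Koszul complex is acyclic. Set $E = \phiu \OmX{1} \tensor \OfamX(1)$, a locally free sheaf of rank $n$ on $\famX$. Because $s_\famX$ vanishes on $\famX$, its relative differential $\sigma := \dPXP s_\famX$ descends to a well-defined section of $\OmPXP{1}|_\famX \tensor \OPX(1)|_\famX = E$. The $j$-th term of the complex equals $\phiu \OmX{j} \tensor \OfamX(j) \tensor \OfamX(k-n) = (\wedge^j E) \tensor \OfamX(k-n)$, and the stated differential $\beta \mapsto \dPXP s_\famX \wedge \beta$ is wedge product with $\sigma$; hence the complex is the Koszul complex $\wedge^\bullet E$ for $\sigma$, twisted by $\OfamX(k-n)$.

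Next I would identify the vanishing scheme of $\sigma$. In local trivializations near a point $(p,x) \in \famX$, the section $s_\famX$ is represented by the defining equation $f_p$ of the fiber $\famX_p$ near $x$, and $\sigma(p,x)$ is its differential along $X$. Hence $\sigma$ vanishes at $(p,x)$ precisely when $x$ is a singular point of $\famX_p$, so that $Z(\sigma) = \famY$ as closed subschemes of $\famX$. Since $\famY \simeq \PP(\NXQ)$ has dimension $d-1$ by \eqref{eq:normal}, whereas $\dim \famX = n+d-1$, the codimension of $\famY$ in $\famX$ equals $n = \rk E$; therefore $\sigma$ is a regular section, and the Koszul complex is acyclic except at the final term.

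It remains to bookkeep the twists. The cokernel of the Koszul differential $\wedge^{n-1} E \to \wedge^n E$ equals $\det E \tensor \OfamY = (\phiu \omega_X \tensor \OfamX(n))|_\famY = \psiu \omega_X \tensor \OfamY(n)$, using the identification $\wedge^n E = \phiu \omega_X \tensor \OfamX(n)$. After the overall twist by $\OfamX(k-n)$ the cokernel becomes $\psiu \omega_X \tensor \OfamY(k) = \psiu \OmX{n} \tensor \OfamY(k)$, which is precisely the sheaf we wanted to resolve.

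The only potential pitfall is matching conventions so that the final term of the Koszul complex appears as $\phiu \OmX{n} \tensor \OfamX(k)$ on the nose and the cokernel comes out as advertised; once the identifications $\wedge^j E \simeq \phiu \OmX{j} \tensor \OfamX(j)$ and $\det E \simeq \phiu \omega_X \tensor \OfamX(n)$ are in place, the rest of the argument is a routine regular-section Koszul computation.
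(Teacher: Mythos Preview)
Your proposal is correct and follows essentially the same approach as the paper: both arguments recognize the complex as the Koszul complex of the section $\sigma = \dPXP s_\famX$ of the rank-$n$ bundle $\phiu \OmX{1} \tensor \OfamX(1)$, identify its zero scheme with $\famY$, and invoke regularity to conclude. The only difference is cosmetic: the paper first produces the section $\alpha_\famX$ abstractly from the normal-bundle sequence \eqref{eq:normal} and the projective-bundle description $\famX \simeq \PP(\iu \Theta_Q)$, so that $Z(\alpha_\famX) = \PP(\NXQ) = \famY$ is immediate scheme-theoretically, and then checks $\alpha_\famX = \dPXP s_\famX$; you instead start from $\dPXP s_\famX$ and verify its zero scheme by a local computation plus a codimension count. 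Both routes are fine.
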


\begin{proof}
From the embedding $i \colon X \into Q$ given by the linear system of $\OX(1)$, we
have the following exact sequence:
\begin{diagram}
0 &\rTo& \Theta_X &\rTo& \iu \Theta_Q &\rTo& \NXQ &\rTo& 0.
\end{diagram}
The projectivization $\PP(\iu \Theta_Q)$ is naturally isomorphic to $\famX$, and the
exact sequence determines a canonical section $\alfamX$ of the vector bundle $(\phiu
\Theta_X)^{\vee} \tensor \shO_{\PP(\iu \Theta_Q)}(1) \simeq \phiu \OmX{1} \tensor
\OfamX(1)$, whose zero scheme is $\PP(\NXQ) \simeq \famY$. As a consequence, $\famY$
is a locally complete intersection in $\famX$, and its structure sheaf is resolved by
the Koszul-type complex
\begin{diagram}
	\OfamX &\rTo& \phiu \OmX{1} \tensor \OfamX(1) &\rToDots&
		\phiu \OmX{n-1} \tensor \OfamX(n-1) &\rTo&
		\phiu \OmX{n} \tensor \OfamX(n),
\end{diagram}
whose differential is given by wedge product with $\alfamX$. It is then not hard to
see that $\alfamX = \dPXP \sfamX$, viewed as a map
\[
	\dPXP \sfamX \colon \OfamX(-1) \to \OmPXP{1} \big\vert_{\famX} 
		\simeq \phiu \OmX{1}.
\]
We obtain the asserted resolution after tensoring by $\OfamX(k-n)$.
\end{proof}

We are now in a position to compute the coherent sheaf on $\PP(\Theta_P)$ associated
to the graded module $\Gr^F \Nmod^0$.

\begin{lemma} \label{lem:charN0}
We have $\shC \bigl( \Nmod^0, F \bigr) \simeq \psiu \OmX{n} \tensor \OfamY(n+1)$,
viewed as a coherent sheaf on $\famY \into \PP(\Theta_P)$ under the natural inclusion
map.
\end{lemma}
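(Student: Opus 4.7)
The plan is to compute the characteristic sheaf via the characteristic module
\[
	C(\Nmod^0, F) = \bigoplus_k H^0 \bigl( P, \Gr_k^F \Nmod^0 \bigr),
\]
which, by \eqref{eq:charmod}, determines $\shC(\Nmod^0, F)$ in all sufficiently large degrees. The crucial input is strictness of the Hodge filtration under $\prPl$ (used already in \subsecref{subsec:underlyingD}, since $\prP$ is projective); this allows $\Gr^F$ to commute with $H^0 \prPl$, giving $\Gr_k^F \Nmod^0 \simeq H^0 \prPl \Gr_k^F \DR_{P \times X / P}\bigl( \OPX(\ast \famX) \bigr)$.

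Next, I would identify the graded of the de Rham complex. Since $\famX$ is a smooth hypersurface in $P \times X$, the pole-order filtration satisfies $\Gr_k^F \OPX(\ast \famX) \simeq i_* \OfamX(k+1)$ for $k \geq 0$, and the graded symbol of $\dPXP$ acts on a local section $\alpha / \sfamX^{k+1}$ as wedge product with $\dPXP \sfamX$, up to a nonzero scalar. Consequently, $\Gr_k^F \DR_{P \times X / P}\bigl( \OPX(\ast \famX) \bigr)$ is quasi-isomorphic (on $\famX$) to the Koszul-type complex
\begin{diagram}
	\OfamX(k+1) &\rTo& \phiu \OmX{1} \tensor \OfamX(k+2) &\rToDots&
		\phiu \OmX{n} \tensor \OfamX(k+n+1)
\end{diagram}
from Lemma~\ref{lem:Koszul} (with parameter $k' = k+n+1$). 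That lemma identifies its unique cohomology sheaf as $\psiu \OmX{n} \tensor \OfamY(k+n+1)$, placed in the top degree.

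For $k \gg 0$, every term in the complex is $\prPl$-acyclic, so pushing forward yields $\Gr_k^F \Nmod^0 \simeq \pi_{Y\ast} \bigl( \psiu \OmX{n} \tensor \OfamY(k+n+1) \bigr)$, where $\pi_Y \colon \famY \to P$ is the restriction of $\pi$; since $\OfamY(k+n+1)$ is ample on the projective bundle $\famY \to X$, one further concludes $H^0(P, \Gr_k^F \Nmod^0) \simeq H^0 \bigl( \famY, \psiu \OmX{n} \tensor \OfamY(k+n+1) \bigr)$ for $k \gg 0$. Recognizing that, under the embedding $\famY \into \famX \into \PP(\Theta_P)$, the line bundle $\OfamY(1)$ coincides with the restriction of $\shO_{\PP(\Theta_P)}(1)$, these are exactly the graded components (shifted by $n+1$) of the $S$-module associated to the coherent sheaf $\psiu \OmX{n} \tensor \OfamY(n+1)$ on $\famY$. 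This gives the claimed description of $\shC(\Nmod^0, F)$.

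The main obstacle I anticipate is the careful matching of the graded differential with the Koszul differential of Lemma~\ref{lem:Koszul}; once that local computation is in hand, the rest is essentially bookkeeping combined with strictness of the filtration under projective pushforward and the vanishing of higher $\prPl$ for the terms $\phiu \OmX{p} \tensor \OfamX(k+p+1)$ for $k \gg 0$.
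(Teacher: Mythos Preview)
Your approach is correct and in fact somewhat cleaner than the paper's. Both proofs compute the characteristic module $C(\Nmod^0, F)$ in large degrees and both rely on Lemma~\ref{lem:Koszul}, but they organize the computation differently. The paper first invokes the vanishing theorem (Theorem~\ref{thm:vanishing}) to pass from $H^0(P, \Gr_k^F \Nmod^0)$ to the quotient $H^0(P, F_k \Nmod^0)/H^0(P, F_{k-1} \Nmod^0)$, then uses the explicit global-sections formula of Corollary~\ref{cor:glob-sect} to write this as $W_{k+n+1}^n / (\dPXP W_{k+n}^{n-1} + W_{k+n}^n)$, and finally matches this expression against $H^0(\famY, \psiu \OmX{n} \tensor \OfamY(k+n+1))$ using only the right-exact tail of the Koszul complex. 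You instead appeal directly to Saito's strictness to commute $\Gr_k^F$ past $H^0 \prPl$, then recognize the resulting graded complex as the \emph{full} Koszul resolution of Lemma~\ref{lem:Koszul}, obtaining the stronger sheaf-level identification $\Gr_k^F \Nmod^0 \simeq \pi_{\famY\ast}\bigl(\psiu \OmX{n} \tensor \OfamY(k+n+1)\bigr)$ before taking global sections. Your route bypasses the auxiliary results of \subsecref{subsec:vanishing} for this lemma and makes the role of the Koszul complex more transparent; the paper's route, by contrast, keeps everything at the level of explicit section spaces $W_k^p$. One small correction: your claim $\Gr_k^F \OPX(\ast \famX) \simeq i_\ast \OfamX(k+1)$ fails at $k=0$ (where $\Gr_0^F = \OPX(\famX)$ is not supported on $\famX$), but since you only need the identification for $k \geq 1$---and ultimately only for $k \gg 0$---this does not affect the argument.
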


\begin{proof}
As in Lemma~\ref{lem:charN}, we shall compute the characteristic module $C \bigl(
\Nmod^0, F \bigr)$ directly, at least in sufficiently high degrees $k \gg 0$. To
simplify the notation, we again use the notation
\[
	W_k^p = H^0 \bigl( P \times X, \OmPXP{p}(k \famX) \bigr) \simeq
		H^0 \bigl( X, \OmX{p}(k) \bigr) \tensor H^0 \bigl( P, \OP(k) \bigr)
\]
for the space of sections of the indicated vector bundle. By definition,
\[
	C \bigl( \Nmod^0, F \bigr) = \bigoplus_{k \in \ZZ} 
		H^0 \bigl( P, F_k \Nmod^0 / F_{k-1} \Nmod^0 \bigr),
\]
and since $H^1 \bigl( P, F_k \Nmod^0 \bigr) = 0$ for $k \gg 0$ by
Theorem~\ref{thm:vanishing}, we get that
\[
	H^0 \bigl( P, F_k \Nmod^0 \big/ F_{k-1} \Nmod^0 \bigr) \simeq 
		\frac{H^0 \bigr( P, F_k \Nmod^0 \bigl)}{H^0 \bigr( P, F_{k-1} \Nmod^0 \bigr)}.
\]
By Corollary~\ref{cor:glob-sect}, $H^0 \bigl( P, F_k \Nmod^0 \bigr) \simeq
W_{k+n+1}^n \big/ \dPXP W_{k+n}^{n-1}$, and so we finally have
\begin{equation} \label{eq:aux2}
	H^0 \bigl( P, F_k \Nmod^0 \big/ F_{k-1} \Nmod^0 \bigr) \simeq
		\frac{W_{k+n+1}^n}{\dPXP W_{k+n}^{n-1} + W_{k+n}^n}.
\end{equation}

Next, we determine the graded module corresponding to $\psiu \OmX{n}$, given by
\[
	\bigoplus_{k \in \ZZ}
		H^0 \bigl( \famX, \psiu \OmX{n} \tensor \OfamX(k) \bigr) \simeq
	\bigoplus_{k \in \ZZ}
		H^0 \bigl( \famY, \psiu \OmX{n} \tensor \OfamY(k) \bigr).
\]
From Lemma~\ref{lem:Koszul}, we know that the sequence
\begin{diagram}
\phiu \OmX{n-1} \tensor \OfamX(k-1) &\rTo& \phiu \OmX{n} \tensor \OfamX(k) &\rTo&
	\psiu \OmX{n} \tensor \OfamY(k) &\rTo& 0
\end{diagram}
is exact; as $\OfamX(1)$ is ample, the corresponding sequence of global sections
remains exact for $k \gg 0$. We therefore obtain 
\[
	H^0 \bigl( \famY, \psiu \OmX{n} \tensor \OfamY(k) \bigr) \simeq
		\frac{H^0 \bigl( \famX, \phiu \OmX{n} \tensor \OfamX(k) \bigr)}%
			{H^0 \bigl( \famX, \phiu \OmX{n-1} \tensor \OfamX(k-1) \bigr)}.
\]
When $k \gg 0$, one easily shows that $H^0 \bigl( \famX, \phiu \OmX{p} \tensor
\OfamX(k) \bigr) \simeq W_k^p / W_{k-1}^p$; since the differential in the exact
sequence is given by $\dPXP \sfamX$, we conclude that
\begin{equation} \label{eq:aux1}
	H^0 \bigl( \famX, \psiu \OmX{n} \tensor \OfamX(k) \bigr) \simeq
		\frac{W_k^n}{\dPXP W_{k-1}^{n-1} + W_{k-1}^n}
\end{equation}
once $k$ is sufficiently large.
The assertion now follows easily upon comparing the expressions in \eqref{eq:aux2}
and \eqref{eq:aux1}.
\end{proof}

Now we turn to the proof of Proposition~\ref{prop:R}.
\begin{proof}
By Lemma~\ref{lem:charN0}, the characteristic sheaf $\shC = \shC \bigl( \Nmod^0, F \bigr)$
is supported on $\famY$ and has rank one. Since the restriction of $\Nmod^0$ to
$\Psm$ is locally free, it follows that the characteristic variety of
$\Nmod^0$ has two irreducible components:
\begin{enumerate}
\item The zero section of $T_P^{\ast}$, with multiplicity equal to the generic rank
of $\Nmod^0$.
\item The cone over the set $\famY$, with multiplicity one.
\end{enumerate}
The same is then true (with a different multiplicity for the first component) for the
holonomic $\Dmod$-module $\Mvan \oplus \Rmod$, since it
differs from $\Nmod^0$ only by the vector bundle $\Hprim{n}(X, \CC) \tensor \OP$ by
Theorem~\ref{thm:Dmod1}. It follows that $E_{0,j} = 0$ for $j \geq 2$: indeed, the
support of $E_{0,j}$ has codimension $j$ in $P$, and so its characteristic variety
could not be contained in that of $\Nmod^0$ unless $j = 1$.

Now suppose that the vanishing cohomology of the hypersurfaces is nontrivial.
Multiplicity is an additive function on holonomic $\Dmod$-modules, and so one of
$\Mvan$ and $\Rmod$ has to have multiplicity zero along the cone over $\famY$. Since
$\Rmod$ is already supported inside $\dualX$, we conclude that if $\Rmod \neq 0$, the
characteristic variety of $\Mvan$ has to consist of just the zero section, which
means that $\Mvan$ has to be a locally free sheaf.
But this can only happen when $\Mvan = 0$, because the monodromy action on the
vanishing cohomology is irreducible \cite{Voisin}*{Corollaire~15.28}.
Indeed, if the $\Dmod$-module $\Mvan$ was locally free, it would be a flat vector
bundle, and therefore trivial (because $P$ is simply connected). In particular, the
local system $\Rvan{n-1} \pisml \CC$ would be trivial. But since it is known that
$H^0 \bigl( \Psm, \Rvan{n-1} \pisml \CC \bigr) = 0$, this is not possible unless
$\Rvan{n-1} \pisml \CC = 0$. Our assumptions rule out this possibility,
and we conclude that $\Mvan \neq 0$ and hence $\Rmod = 0$.
\end{proof}

\begin{example}
It is illustrative to compare our result with the example of plane conics, given
in \ocite{BFNP}*{Example~5.13}. This is the special case when $X = \PPn{2}$, with
line bundle $\shO_{\PPn{2}}(2)$. As the authors remark, $E_{0,0} = 0$, and $E_{0,j} =
0$ for all $j \geq 2$, but $E_{0,1} \neq 0$. In other words, $\Mvan = 0$ while
$\Rmod \neq 0$; note that only one of the summands of $\Rmod$ is nonzero, as required
by the multiplicity calculation above.
\end{example}

We conclude with a description of the characteristic sheaf of $(\Mmod, F)$. 

\begin{corollary} \label{cor:charM}
If the vanishing cohomology of the hypersurfaces is nontrivial, we have $\shC(\Mmod,
F) \simeq \psiu \OmX{n}$.
\end{corollary}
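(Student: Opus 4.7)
The plan is to read off $\shC(\Mmod,F)$ from the short exact sequence
\[
	\begin{diagram}
		0 &\rTo& \Hprim{n}(X, \CC) \tensor \OP &\rTo& \Nmod^0 &\rTo& \Mmod(-n-1) &\rTo& 0
	\end{diagram}
\]
of Corollary~\ref{cor:Dmod2}, which is strict with respect to the filtrations, by combining it with the explicit computation of $\shC(\Nmod^0, F)$ carried out in Lemma~\ref{lem:charN0}.

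First I would apply $\Gr^F$: since the sequence is strict, the resulting sequence of graded $\Sym\Theta_P$\nobreakdash-modules remains exact. Passing to associated coherent sheaves on $\PP(\Theta_P) = \Proj(\Sym \Theta_P)$ gives an exact sequence of characteristic sheaves. Now the filtration on the $\Dmod$\nobreakdash-module $\Hprim{n}(X, \CC) \tensor \OP$ is the one in \eqref{eq:filt}, induced from the finite Hodge filtration on $\Hprim{n}(X, \CC)$; hence $\Gr^F_k\bigl(\Hprim{n}(X, \CC) \tensor \OP\bigr)$ is nonzero only for finitely many values of $k$. As a graded $S$-module it is therefore bounded, and the associated sheaf on $\Proj S = \PP(\Theta_P)$ is zero. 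Consequently the exact sequence yields an isomorphism
\[
	\shC \bigl( \Nmod^0, F \bigr) \simeq \shC \bigl( \Mmod(-n-1), F \bigr).
\]

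Next I would unwind the Tate twist. With the convention $(\Mmod, F)(k) = (\Mmod, F_{\bullet - k})$ of \subsecref{subsec:conventions}, the filtration on $\Mmod(-n-1)$ is $F'_k = F_{k+n+1} \Mmod$, so that as a graded $S$-module one has
\[
	\bigoplus_{k \in \ZZ} \Gr_k^{F'} \Mmod(-n-1) \;=\; \Bigl( \bigoplus_{k \in \ZZ} \Gr_k^F \Mmod \Bigr)[n+1],
\]
and the standard identification $\widetilde{M[n+1]} \simeq \widetilde{M}(n+1)$ on $\Proj$ translates into
\[
	\shC \bigl( \Mmod(-n-1), F \bigr) \simeq \shC(\Mmod, F)(n+1),
\]
where the twist is by $\OPTheta(1)\restr{\famY} = \OfamY(1)$ (this is the same line bundle that appears in Lemma~\ref{lem:charN0}, since $\famX \subseteq \famI \simeq \PP(\Theta_P)$ and $\OfamX(1)$ is the restriction of $\OPTheta(1)$). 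Combining with Lemma~\ref{lem:charN0}, which gives $\shC(\Nmod^0, F) \simeq \psiu \OmX{n} \tensor \OfamY(n+1)$, and cancelling the twist yields $\shC(\Mmod, F) \simeq \psiu \OmX{n}$.

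The main point to be careful about is strictness of the sequence in Corollary~\ref{cor:Dmod2}, which is what legitimizes passing to the associated graded, together with the bookkeeping of the Tate twist and the identification of $\OfamY(1)$ with the restriction of $\OPTheta(1)$; once these are in place the proof is essentially formal. The nontriviality hypothesis on the vanishing cohomology is used only through Corollary~\ref{cor:Dmod2}, which requires $\Rmod = 0$ (Proposition~\ref{prop:R}).
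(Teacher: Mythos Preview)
Your proof is correct and follows essentially the same route as the paper: both use the strict short exact sequence relating $\Nmod^0$ and $\Mmod$ to reduce to Lemma~\ref{lem:charN0}, noting that the contribution of the trivial bundle $\Hprim{n}(X,\CC)\tensor\OP$ disappears on $\Proj$. One small point of care: the filtered form of the sequence in Corollary~\ref{cor:Dmod2} (with $\Mmod(-n-1)$ rather than $\Mvan(-1)$) is stated there under the assumption \eqref{eq:ample}; the paper handles this by observing that the identification $\Gr_k^F\Mmod\simeq\Gr_{k-n-1}^F\Nmod^0$ still holds for $k\gg 0$ without \eqref{eq:ample}, which is all that is needed for the sheaf on $\Proj$---your argument implicitly relies on the same fact, so it would be cleaner to cite Theorem~\ref{thm:Dmod1} directly and remark that $F_k\Mvan=F_{k+n}\Mmod$ for $k\gg 0$.
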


\begin{proof}
This follows almost immediately from the calculations just done. Indeed,
consider the short exact sequence in Corollary~\ref{cor:Dmod2}. 
It shows that $\Mmod$ and $\Nmod^0$ differ only by the locally free sheaf $\Hprim{n}(X, \CC)
\tensor \OP$, and so we have the isomorphism 
\[
	\Gr_k^F \Mmod \simeq \Gr_{k-n-1}^F \Nmod^0;
\]
it holds for any $k > 0$ provided that \eqref{eq:ample} is satisfied, and in general
for $k \gg 0$, which is sufficient here. We then get $\shC(\Mmod, F) \simeq \psiu \OmX{n}$ from
Lemma~\ref{lem:charN0}.
\end{proof}

\subsection{Hypercohomology of the de Rham complex}
\label{subsec:hypercohomology}

In this section, we assume that the vanishing cohomology of the hypersurfaces is
nontrivial, so that $\Mvan = \Mmod$ by Corollary~\ref{cor:Dmod2}.
The $\Dmod$-module $\Mmod$ then underlies the Hodge module $\jlsl
\Vvan$ on $P$. On the other hand, the corresponding perverse sheaf is
\[
	\rat \jlsl \Vvan = \jlsl \rat \Vvan = \jlsl \Rvan{n-1} \pisml \QQ \decal{d};
\]
after tensoring with $\CC$, it becomes isomorphic to the de Rham complex for
$\Mmod$, by the definition of mixed Hodge modules. Therefore,
\begin{equation} \label{eq:iso-dR-perv}
	\DR_P \bigl( \Mmod \bigr) \simeq \jlsl \rat \Vvan \tensor_{\QQ} \CC.
\end{equation}
The purpose of this section is to prove that the hypercohomology $\HH^{-d+1}
\bigl( \DR_P \Mmod \bigr)$ of the de Rham complex is isomorphic to the primitive
cohomology of $X$.

\begin{lemma} \label{lem:unique}
Assume that the vanishing cohomology of the hypersurfaces is nontrivial. Then the
Leray spectral sequence gives rise to a (canonical) isomorphism
\[
	\Hprim{n}(X, \CC) \simeq \HH^{-d+1} \bigl( \DR_P(\Mmod) \bigr).
\]
\end{lemma}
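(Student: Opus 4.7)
The plan is to compute $\HH^{-d+1}(\DR_P(\Mmod))$, which by \eqref{eq:iso-dR-perv} equals $\HH^{1-d}(P, \jlsl \rat \Vvan) \tensor_{\QQ} \CC$, by extracting it as a summand of the cohomology $H^n(\famX, \CC)$ and comparing with the projective bundle formula for $\phi \colon \famX \to X$.

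First I would apply the decomposition established in Theorem~\ref{thm:BFNP} (relying on Proposition~\ref{prop:R} to discard the error term) to write $\pil \QfamXH \decal{\dfamX} \simeq \bigoplus_k E_k[-k]$ in $D^b \MHM(P)$. Taking hypercohomology and using $n - \dfamX = 1 - d$ yields
\[
	H^n(\famX, \CC) \simeq \bigoplus_k \HH^{1-d-k}(P, E_k).
\]
Since the odd cohomology of $P = \PPn{d}$ vanishes, only a few summands are nonzero. From $k = 0$ the surviving contribution is exactly $\HH^{1-d}(P, \jlsl \Vvan)$, because the other piece $H^{n-1}(X) \tensor \QPH \decal{d}$ of $E_0$ gives $H^{n-1}(X) \tensor H^1(P, \CC) = 0$; from $k = 1$ one gets $H^{n+2}(X, \CC)(1)$; and from $k = -(2l-1)$ with $l \geq 1$ one gets $H^{n-2l}(X, \CC)(-l)$.

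Next I would compare with the projective bundle formula for the $\PPn{d-1}$-bundle $\phi \colon \famX \to X$:
\[
	H^n(\famX, \CC) \simeq \bigoplus_{j = 0}^{d-1} H^{n-2j}(X, \CC)(-j).
\]
The inequality $d > n$, which holds because the embedding $X \into \PP(V)$ is non-degenerate and the vanishing cohomology is assumed nontrivial, ensures that the Leray contributions $H^{n-2l}(X)(-l)$ for $l \geq 1$ match the projective-bundle contributions for $j = l \geq 1$. Cancelling these in the Grothendieck group of polarizable pure Hodge structures (which is semisimple, so equality of classes implies isomorphism) leaves
\[
	\HH^{1-d}(P, \jlsl \Vvan) \oplus H^{n+2}(X, \CC)(1) \simeq H^n(X, \CC).
\]
Hard Lefschetz on $X$ identifies $H^{n+2}(X, \CC)(1)$ with the Lefschetz image $L H^{n-2}(X, \CC) \subseteq H^n(X, \CC)$, whose quotient is by definition $\Hprim{n}(X, \CC)$, completing the identification.

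The main subtlety is to verify that this yields a \emph{canonical} isomorphism coming from the Leray spectral sequence, not merely a numerical identity. This should follow from the naturality of the decomposition theorem, which places $\HH^{1-d}(P, \jlsl \Vvan)$ as a canonical direct summand of $H^n(\famX, \CC)$, combined with the naturality of the projective-bundle splitting $\phi^* \colon H^n(X, \CC) \into H^n(\famX, \CC)$; together these pin down the canonical isomorphism $\HH^{1-d}(P, \jlsl \Vvan) \simeq \Hprim{n}(X, \CC)$ induced by the Leray spectral sequence.
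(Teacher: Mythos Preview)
Your approach and the paper's coincide in substance: both compute $H^n(\famX)$ in two ways---via the decomposition of $\pil \QfamXH \decal{\dfamX}$ from Theorem~\ref{thm:BFNP}, and via the projective-bundle formula for $\phi \colon \famX \to X$---and compare. Your enumeration of the Leray summands and the cancellation against the projective-bundle summands are correct, and this is exactly the dimension count the paper performs at the end of its proof.

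Where your argument falls short is the canonicity, which you yourself flag as the main subtlety. You assert that the Decomposition Theorem makes $\HH^{1-d}(P, \jlsl \Vvan)$ a \emph{canonical} direct summand of $H^n(\famX)$, but the splitting $\pil \QfamXH \decal{\dfamX} \simeq \bigoplus_k E_k \decal{-k}$ is not canonical (the paper says so explicitly). What \emph{is} canonical is the perverse Leray filtration $L^{\bullet}$ on $H^n(\famX)$, which realizes $E_2^{-d+1,0} \simeq \HH^{1-d}(P, \jlsl \Vvan)$ only as the subquotient $L^1/L^2$. Cancellation in the Grothendieck group of a semisimple category produces \emph{some} isomorphism, not the specific one the lemma claims. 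The paper repairs this by first constructing the map: it checks that $\phiu \colon H^n(X) \to H^n(\famX)$ carries $\Hprim{n}(X)$ into $L^1$ (since the composite to $H^n(\famX)/L^1 \simeq E_2^{-d,1} \simeq H^{n+2}(X)(1)$ kills primitive classes), and then composes with the projection $L^1 \to L^1/L^2$. Only after this explicit map $\rho \circ \phiu \colon \Hprim{n}(X) \to E_2^{-d+1,0}$ is in hand does the dimension count show it to be an isomorphism. Your final paragraph names the right ingredients ($\phiu$ and the Leray spectral sequence) but does not assemble them into the actual map.
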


\begin{proof}
Since $\Rmod = 0$, we get $\Mmod = \Mvan$. According to the results in
\subsecref{subsec:pieces}, we have 
\begin{equation} \label{eq:Hkpil}
	H^q \pil \QfamXH \decal{\dfamX} \simeq \begin{cases}
		H^{n+q-1}(X) \tensor \QPH \decal{d} &\text{for $q<0$,} \\
		H^{n+q+1}(X)(1) \tensor \QPH \decal{d} &\text{for $q>0$,} \\
		\jlsl \Vvan \oplus H^{n-1}(X) \tensor \QPH \decal{d} &\text{for $q=0$.}
	\end{cases}
\end{equation}

The $\Dmod$-module component of $\jlsl \Vvan$ is precisely $\Mmod$, and so the
hypercohomology of $\DR_P(\Mmod)$ computes the complex vector spaces underlying the
cohomology modules of $\jlsl \Vvan$. We calculate that
\begin{align*}
	H^{-d+1} \aPl H^0 \pil \QfamXH \decal{\dfamX}  
		&\simeq H^{-d+1} \aPl \jlsl \Vvan \oplus H^{n-1}(X) \tensor H^1 \aPl \QPH \\
		&= H^{-d+1} \aPl \jlsl \Vvan,
\end{align*}
because $H^1 \aPl \QPH = H^1(P) = 0$. It follows that $\HH^{-d+1} \bigl(
\DR_P(\Mmod) \bigr)$ is the complex vector space of the mixed Hodge structure
$H^{-d+1} \aPl H^0 \pil \QfamXH \decal{\dfamX}$.

Now we bring in the (perverse) Leray spectral sequence,
\[
	E_2^{p,q} = H^p \aPl H^q \pil \QfamXH \decal{\dfamX} \Longrightarrow
		H^{p+q} \afamXl \QfamXH \decal{\dfamX} = H^{p+q+\dfamX}(\famX),
\]
which is a spectral sequence of mixed Hodge modules. Note that $E_2^{p,q} = 0$
whenever $p < -d = - \dim P$. The term we are really interested in is 
\[
	E_2^{-d+1,0} \simeq H^{-d+1} \aPl \jlsl \Vvan;
\]
it sits in degree $-d+1$, and is thus a graded quotient of $H^{-d+1+\dfamX}(\famX) =
H^n(\famX)$. 

The Decomposition Theorem implies that the spectral sequence degenerates at the
$E_2$-page (in fact, it even implies that $H^k \afamXl \QfamXH \decal{\dfamX}$ is
isomorphic to the direct sum of all the $E_2^{p,q}$ with $p+q=k$, albeit
non-canonically). Let us write $L^{\bullet}$ for the induced filtration on the
cohomology of $\famX$.  We then have a short exact sequence of mixed Hodge structures
\begin{diagram}[l>=2em]
0 &\rTo& L^1 H^n(\famX) &\rTo& H^n(\famX) &\rTo& E_2^{-d,1} &\rTo& 0,
\end{diagram}
and $E_2^{-d,1} \simeq H^{n+2}(X)(1) \tensor H^0(P)$ by virtue of \eqref{eq:Hkpil}.

Consider now the pullback map $\phiu \colon H^n(X) \to H^n(\famX)$. As the primitive
cohomology is the kernel of $H^n(X) \to H^{n+2}(X)(1)$, we get an induced map from
$H^n(X) \prim$ to $L^1 H^n(\famX)$. The next step of the Leray filtration gives
another short exact sequence
\begin{diagram}[l>=2em]
0 &\rTo& L^2 H^n(\famX) &\rTo& L^1 H^n(\famX) &\rTo^{\rho} & E_2^{-d+1,0} &\rTo& 0,
\end{diagram}
and by composition, we finally obtain a (canonical) map of mixed Hodge structures
\begin{equation} \label{eq:Leray-map}
	\rho \circ \phiu \colon \Hprim{n}(X) \to E_2^{-d+1,0} \simeq 
		H^{-d+1} \aPl \jlsl \Vvan.
\end{equation}

That this map is an isomorphism follows easily from the fact that $\phi \colon \famX
\to X$ is a projective space bundle of rank $d-1$. Indeed, we naturally have
\[
	H^n(\famX) \simeq \bigoplus_{i \geq 0} H^i(P) \tensor H^{n-i}(X)
		= H^n(X) \oplus \bigoplus_{i \geq 2} H^i(P) \tensor H^{n-i}(X).
\]
The terms in the direct sum are precisely the graded quotients of $L^2 H^n(\famX)$,
because the isomorphisms in \eqref{eq:Hkpil} imply that
\[
	E_2^{-d+i,-i+1} = H^{-d+i} \aPl H^{-i-1} \pil \QfamXH \decal{\dfamX}
		\simeq H^i(P) \tensor H^{n-i}(X)
\]
whenever $i \geq 2$. Therefore, the map from $H^n(X)$ to $H^n(\famX) \big/ L^2
H^n(\famX)$ is an isomorphism for dimension reasons; this implies that
\eqref{eq:Leray-map} is also an isomorphism. By passing to the underlying complex
vector spaces, we get the result.
\end{proof}

\subsection{Cohomology sheaves of the de Rham complex}

We conclude our application of the theory of mixed Hodge modules by determining the
cohomology sheaves of the de Rham complex $\DR_P(\Mmod)$. Again, we assume throughout
that the vanishing cohomology of the hypersurfaces is nontrivial, so that $\Mvan =
\Mmod$. Recall that $\DR_P(\Mmod)$ is a perverse complex, because $\Mmod$ underlies a
Hodge module; all of its cohomology sheaves
\[
	\cohshf^k = \cohshf^k \DR_P \bigl( \Mvan \bigr) \simeq \cohshf^k \DR_P \bigl( \Mmod \bigr)
\]
are therefore constructible sheaves (in the Zariski topology). The following lemma
describes them very precisely.

\begin{lemma} \label{lem:cohdeRham}
If the vanishing cohomology of the hypersurfaces is nontrivial, then
\[
	R^{n - 1 + (d + k)} \pil \CC_{\famX} \simeq \cohshf^k \DR_P \bigl( \Mmod \bigr) 
		\oplus H^{n - 1 - (d + k)}(X, \CC) \tensor \CC_P
\]
for all $k \geq -d$.
\end{lemma}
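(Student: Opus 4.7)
The plan is to deduce the formula directly from the Decomposition Theorem for $\pi$, combined with the computation of the individual summands given in Theorem~\ref{thm:BFNP}, and then to pass to ordinary cohomology sheaves. Concretely, I would first apply $\rat$ and complexify the decomposition
\[
	\pil \QfamXH \decal{\dfamX} \simeq \bigoplus_i H^i \pil \QfamXH \decal{\dfamX} \decal{-i}
\]
in $D^b \MHM(P)$ to obtain an isomorphism $\derR \pil \CC_{\famX} \decal{\dfamX} \simeq \bigoplus_i P_i \decal{-i}$ in $D_c^b(P, \CC)$, where $P_i$ denotes the perverse sheaf underlying $H^i \pil \QfamXH \decal{\dfamX}$ tensored with $\CC$. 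Using Theorem~\ref{thm:BFNP}, together with the crucial identification $P_0 \simeq \DR_P(\Mmod) \oplus H^{n-1}(X,\CC) \tensor \CC_P \decal{d}$ coming from \eqref{eq:iso-dR-perv} and the vanishing of the error term $\Rmod$ (Proposition~\ref{prop:R}), each $P_i$ with $i \neq 0$ is just a direct sum of copies of $\CC_P \decal{d}$.

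Next I would apply the ordinary cohomology sheaf functor $\cohshf^m$ with $m = n - 1 + d + k$, and use the elementary fact that $\cohshf^\ell(\CC_P \decal{d}) = \CC_P$ if $\ell = -d$ and $0$ otherwise. This forces the only $i \neq 0$ that can contribute to $R^m \pil \CC_{\famX}$ to be the unique $i$ with $m - i - \dfamX = -d$, namely $i = d + k$; the $i = 0$ summand always contributes $\cohshf^{m-\dfamX} \DR_P(\Mmod) = \cohshf^k \DR_P(\Mmod)$, and additionally $H^{n-1}(X,\CC) \tensor \CC_P$ precisely when $k = -d$.

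For $k > -d$, i.e.\ $i = d+k > 0$, Theorem~\ref{thm:BFNP} yields a contribution of $H^{n+i+1}(X,\CC) \tensor \CC_P = H^{n+d+k+1}(X,\CC) \tensor \CC_P$, which by Hard Lefschetz (or Poincar\'e duality) on the smooth projective variety $X$ is canonically isomorphic to $H^{n-1-d-k}(X,\CC) \tensor \CC_P$. For the boundary case $k = -d$, the $i = 0$ summand gives $H^{n-1}(X,\CC) \tensor \CC_P$ directly. In both cases one obtains exactly the right-hand side of the desired formula.

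The computation is essentially forced by the Decomposition Theorem, so I expect no serious obstacle beyond careful bookkeeping of shifts and Tate twists; the only nontrivial ingredient is the identification $H^{n+d+k+1}(X,\CC) \simeq H^{n-1-d-k}(X,\CC)$ via Hard Lefschetz, which is the reason the formula is stated in its symmetric form with $H^{n-1-(d+k)}(X,\CC)$ rather than $H^{n-1+(d+k)+2}(X,\CC)$.
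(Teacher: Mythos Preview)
Your proposal is correct and follows essentially the same route as the paper: both arguments use the Decomposition Theorem, the identification of the summands from Theorem~\ref{thm:BFNP} (together with Proposition~\ref{prop:R} and \eqref{eq:iso-dR-perv}), and Hard Lefschetz to rewrite $H^{n+d+k+1}(X,\CC)$ as $H^{n-1-(d+k)}(X,\CC)$. The only cosmetic difference is that the paper computes stalks at each point $p \in P$ via proper base change and then globalizes, whereas you apply the cohomology-sheaf functor $\cohshf^k$ directly to the global decomposition in $D_c^b(P,\CC)$.
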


\begin{proof}
The proof mirrors that of Corollary~5.6 in \ocite{BFNP}. Let $p \in P$ be an arbitrary
point. Since the map $\pi \colon \famX \to P$ is proper, we have
\[
	\smash[b]{\bigl( R^{n - 1 + d + k} \pil \CC_{\famX} \bigr)}_p 
		= H^{n - 1 + d + k} \bigl( \famX_p, \CC \bigr)
		= \rat H^{n - 1 + d + k} \bigl( \famX_p \bigr) \tensor_{\QQ} \CC
\]
for the stalk of the higher direct image sheaf at $p$, by the Proper Base Change
Theorem from topology.
Because of the decomposition in \eqref{eq:decomp}, we also have
\[
	H^{n - 1 + d + k} \bigl( \famX_p \bigr) = H^{\dfamX + k} \bigl( \famX_p \bigr)
		= H_p^k \bigl( \pil \QfamXH \decal{\dfamX} \bigr) 
		= \bigoplus_{ij} H_p^{k-i} \bigl( E_{i,j} \bigr).
\]
The assumption on the vanishing cohomology implies that $E_{i,j} = 0$ for all $j \neq
0$; see Proposition~\ref{prop:R} and the work in \subsecref{subsec:pieces}. Therefore
\[
	H^{n - 1 + d + k} \bigl( \famX_p \bigr) 
		= \bigoplus_i H_p^{k-i} \bigl( E_{i,0} \bigr).
\]
The remaining terms are now easily computed. On the one hand,
\[
	H_p^k \bigl( E_{0,0} \bigr) \simeq H_p^k \bigl( \jlsl V^{n-1} \bigr) \simeq
			H_p^k \bigl( \jlsl \Vvan \bigr) \oplus
		\begin{cases}
			H^{n-1}(X) & \text{if $k = -d$,} \\
			0					 & \text{otherwise,}
		\end{cases}
\]
from Lemma~\ref{lem:BFNP2}. On the other hand, we have
\[
	E_{i,0} = \begin{cases}
		H^{n+i-1}(X) \tensor \QPH \decal{d} & \text{for $i < 0$,} \\
		H^{n-i-1}(X)(-i) \tensor \QPH \decal{d} & \text{for $i > 0$,}
	\end{cases}
\]
again using the results in \subsecref{subsec:pieces}. Since
$H_p^{k-i} \bigl( \QPH \decal{d} \bigr) = 0$ for $k - i \neq -d$, it then follows that
\[
	H_p^{k-i} \bigl( E_{i,0} \bigr) \simeq 
		\begin{cases}
			H^{n -1 + (d + k)}(X) & \text{if $i = d + k < 0$,} \\
			H^{n - 1 - (d + k)}(X) \bigl(-(d+k)\bigr) & \text{if $i = d + k > 0$,} \\
			0							 & \text{if $i \neq d + k$ and $i \neq 0.$}
		\end{cases}
\]
In conclusion, we have for $k \geq -d$ an isomorphism
\begin{equation} \label{eq:isoXp}
	H^{n - 1 + (d + k)} \bigl( \famX_p \bigr) \simeq
		H_p^k \bigl( \jlsl \Vvan \bigr) 
			\oplus H^{n - 1 - (d + k)}(X) \bigl(-(d+k) \bigr).
\end{equation}
Now apply the functor $\rat$ to this, and note that 
\[
	\rat H_p^k \bigl( \jlsl \Vvan \bigr) \tensor_{\QQ} \CC \simeq \cohshf_p^k
\]
by the comments preceding the statement of the lemma. On stalks, we thus have
\[
	\smash[b]{\bigl( R^{n - 1 + (d + k)} \pil \CC_{\famX} \bigr)}_p  \simeq
		\cohshf_p^k \oplus H^{n - 1 - (d + k)}(X, \CC),
\]
from which the asserted identity follows immediately.
\end{proof}

\begin{note}
It should be noted that the second part of the isomorphism in \eqref{eq:isoXp} can be
described explicitly. Let $L \colon H^k(X) \to H^{k + 2}(X)(1)$ be the
Lefschetz operator associated with the very ample line bundle $\OX(1)$. For each
hypersurface $\famX_p \subseteq X$, smooth or not, there is then naturally a map
\begin{diagram}[l>=2.5em]
	H^{n - 1 - (d + k)}(X) \bigl( -(d + k) \bigr) &\rTo^{L^{d+k}}&
		H^{n - 1 + (d + k)}(X) &\rTo& H^{n - 1 + (d + k)} \bigl( \famX_p \bigr).
\end{diagram}
So \eqref{eq:isoXp} tells us in particular that this map is always injective.
\end{note}

\section{Properties of the Hodge filtration}

\begin{note}
For the remainder of the paper, we assume that the vanishing
cohomology of the hypersurfaces is nontrivial; this guarantees that $\Mvan = \Mmod$.
\end{note}

\subsection{The lowest level in the filtration}
\label{sec:lowest}

Our next result is about the lowest level in the filtration on $\Mmod$, namely the
sheaf $F_1 \Mmod$, and its relationship to the relative canonical bundle of $\famX
\to P$. In this context, the following theorem by Kawamata comes to mind: If $f
\colon Y \to X$ is an algebraic fiber space such that $f$ is smooth over the
complement of a normal crossing divisor $D$, and $R^{\dim Y - \dim X} \fl \CC$ has
unipotent monodromy on $X \setminus D$, then $\fl \shO_Y(K_{Y/X})$ is locally free
and nef, and agrees with the lowest level of the Hodge filtration on the canonical
extension \cite{Kawamata}*{p.~266}. Obviously, neither of the two assumptions is
satisfied in our case, but a similar result holds.

\begin{proposition} \label{prop:lowest}
Assume that $\OX(1)$ is sufficiently ample. Then $F_1 \Mmod$ is an ample vector bundle, and
forms part of a short exact sequence
\begin{equation} \label{eq:SES-lowest}
\begin{diagram}
0 &\rTo& F_1 \Mmod &\rTo& \pil \OfamX \bigl( K_{\famX/P} \bigr) &\rTo&
	H^{n-1,0}(X) \tensor \OP &\rTo& 0.
\end{diagram}
\end{equation}
In particular, $\pil \OfamX \bigl( K_{\famX/P} \bigr)$ is locally free and nef.
\end{proposition}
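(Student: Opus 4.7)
The plan is to derive the sequence \eqref{eq:SES-lowest} by intersecting the strict filtered exact sequence of Corollary~\ref{cor:Dmod2} with the adjunction sequence for the smooth hypersurface $\famX \subset P \times X$, and then to extract ampleness by realizing $F_1 \Mmod$ as a locally free quotient of an ample vector bundle.

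First I would identify $F_1 \Mmod$ as an explicit cokernel. Since the filtered sequence of Corollary~\ref{cor:Dmod2} is strict (by Theorem~\ref{thm:Dmod1}), applying $F_{-n}$ yields a short exact sequence
\[
	0 \to F^n \Hprim{n}(X, \CC) \tensor \OP \to F_{-n} \Nmod^0 \to F_1 \Mmod \to 0,
\]
using the Tate-twist convention $F_{-n}(\Mmod(-n-1)) = F_1 \Mmod$. The Lefschetz decomposition forces $F^n \Hprim{n}(X, \CC) = H^{n,0}(X)$ (no $L$-raise can reach $(n,0)$ from a lower weight). Formula \eqref{eq:HFN} at $k = -n$ reads $F_{-n} \Nmod^0 = \prPl \OmPXP{n}(\famX) / \dPXP \prPl \OmPXP{n-1}$, and the denominator vanishes because every global section of $\OmX{n-1}$ on the compact K\"ahler manifold $X$ is $d$-closed. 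Hence $F_{-n} \Nmod^0 = \prPl \OmPXP{n}(\famX) \simeq H^0(X, \OmX{n}(1)) \tensor \OP(1)$, an ample vector bundle.

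Next I would match this cokernel with $\pil \omega_{\famX/P}$. Pushing forward the adjunction sequence
\[
	0 \to \OmPXP{n} \to \OmPXP{n}(\famX) \to i_{\famX*} \omega_{\famX/P} \to 0
\]
along $\prP$ and using the ampleness vanishing $R^1 \prPl \OmPXP{n}(\famX) \simeq H^1(X, \OmX{n}(1)) \tensor \OP(1) = 0$, I obtain a four-term exact sequence whose leftmost two terms coincide with those of the previous short exact sequence (both maps being induced by the natural inclusion $\prPl \OmPXP{n} \into \prPl \OmPXP{n}(\famX)$). A diagram chase then identifies $F_1 \Mmod$ with $\ker \bigl( \pil \omega_{\famX/P} \to R^1 \prPl \OmPXP{n} \bigr)$, where $R^1 \prPl \OmPXP{n} \simeq H^{n,1}(X) \tensor \OP$; Serre duality combined with Hodge symmetry on $X$ gives $H^{n,1}(X) \simeq H^{n-1,0}(X)$, producing \eqref{eq:SES-lowest}.

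The main obstacle will be local freeness, without which the conclusion ``vector bundle'' does not even parse. I would handle this by a fiberwise cohomology computation: restricting the adjunction sequence on $X$ to a fiber $D = \famX_p$ and using the ampleness vanishing gives $H^q(D, \omega_D) \simeq H^{q+1}(X, \OmX{n})$ for $q \geq 1$, a quantity independent of $p$. By constancy of Euler characteristics in the flat family $\pi \colon \famX \to P$, $h^0(D, \omega_D)$ is then also locally constant, and Grauert's theorem makes $\pil \omega_{\famX/P}$ locally free. Consequently $F_1 \Mmod$ is locally free as the kernel term in a short exact sequence of locally free sheaves, and ample as a locally free quotient of the ample bundle $H^0(X, \OmX{n}(1)) \tensor \OP(1)$. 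Finally, $\pil \omega_{\famX/P}$ is nef as an extension of the trivial (hence nef) sheaf $H^{n-1,0}(X) \tensor \OP$ by the ample (hence nef) bundle $F_1 \Mmod$.
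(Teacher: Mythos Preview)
Your proof is correct and follows essentially the same approach as the paper: both derive the two-step resolution $0 \to H^0(X,\OmX{n}) \tensor \OP \to H^0(X,\OmX{n}(1)) \tensor \OP(1) \to F_1\Mmod \to 0$ from the filtered exact sequence of Corollary~\ref{cor:Dmod2}, compare it with the pushed-forward adjunction sequence for $\famX \subset P \times X$, and read off ampleness and nefness. The only difference is the order in which local freeness is established: the paper observes directly that the inclusion $H^0(X,\OmX{n}) \hookrightarrow H^0(X,\OmX{n}(\famX_p))$ is pointwise injective, so $F_1\Mmod$ is locally free and hence so is $\pil\omega_{\famX/P}$; you instead prove $\pil\omega_{\famX/P}$ locally free first via Grauert and then deduce it for $F_1\Mmod$. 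Both are fine. One small point: your identification $H^{n,1}(X) \simeq H^{n-1,0}(X)$ is more naturally justified by Hard Lefschetz than by Serre duality plus Hodge symmetry (which only gives a dimension count), though for the statement as written an abstract isomorphism suffices.
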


\begin{proof}
From the locally free resolution introduced in \subsecref{subsec:vanishing}, we get a
short exact sequence
\begin{equation} \label{eq:SES-F1}
\begin{diagram}
0 &\rTo& H^0 \bigl( X, \OmX{n} \bigr) \tensor \OP &\rTo& 
		H^0 \bigl( X, \OmX{n}(1) \bigr) \tensor \OP(1) &\rTo& F_1 \Mmod
	&\rTo& 0.
\end{diagram}
\end{equation}
The first map is pointwise injective, because $H^0 \bigl( X, \OmX{n} \bigr)
\subseteq H^0 \bigl( X, \OmX{n}(\famX_p) \bigr)$ for every $p \in P$, and so the 
quotient $F_1 \Mmod$ is locally free, proving the first assertion.

To establish \eqref{eq:SES-lowest}, we use the fact that $\famX$ is a smooth
hypersurface in the product $P \times X$, with line bundle $\OPX(\famX) = \OPX(1)$.
Thus the relative canonical bundle for $\pi \colon \famX \to P$ is given by
the formula
\[
	\OfamX \bigl( K_{\famX/P} \bigr) \simeq
		\phiu \OmX{n} \tensor \OfamX(1),
\]
where $\phi \colon \famX \to X$ is the second projection. Pushing forward the
exact sequence
\begin{diagram}
	0 &\rTo& \prXu \OmX{n} &\rTo& \prXu \OmX{n} \tensor \OPX(1) &\rTo&
		\phiu \OmX{n} \tensor \OfamX(1) &\rTo& 0
\end{diagram}
and using the vanishing of $H^1 \bigl( X, \OmX{n}(1) \bigr)$, we get a five-term
exact sequence; the second assertion follows by comparing it with the resolution for
$F_1 \Mmod$ in \eqref{eq:SES-F1}.

Now $F_1 \Mmod$ is evidently ample, since it is a quotient of 
$H^0 \bigl( X, \OmX{n}(1) \big) \tensor \OP(1)$. Because of the
short exact sequence in \eqref{eq:SES-lowest}, it is then immediate that the direct
image $\pil \OfamX \bigl( K_{\famX/P} \bigr)$ is both locally free and nef.
\end{proof}

\begin{note}
A similar result is true in certain other cases: in the 
$34$-dimensional family of all cubic threefolds in $\PPn{4}$, for instance, the coherent sheaf $F_2
\Mmod$ is locally free, and in fact isomorphic to $H^0 \bigl( \PPn{4},
\shO_{\PPn{4}}(1) \bigr) \tensor \OP(2)$.
\end{note}


\subsection{Computation of Ext-groups}

The purpose of this section is the computation of the groups $\Ext_P^i(F_k \Mmod,
\OP)$. We begin by looking at the sheaves $F_{k-n-1} \Nmod^0$.

\begin{lemma} \label{lem:duality-N0}
We have
\[
	\Ext_P^i \bigl( F_{k-n-1} \Nmod^0, \OP \bigr) \simeq 
	\begin{cases}
		0 &\text{for $i = 0, 1$,} \\
		\left( \dfrac{F^{n+1-k} H^{n+1-i}(X, \CC)}{F^{n-k} H^{n-1-i}(X, \CC)} \right)^{\vee}
			&\text{for $2 \leq i \leq d-1$.}
	\end{cases}
\]
\end{lemma}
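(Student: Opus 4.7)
The plan is to exploit the complex $\Emod_k^{\bullet}$ constructed in \subsecref{subsec:vanishing}, whose terms are $\Emod_k^i = H^0\bigl(X,\OmX{n+i}(k+i)\bigr) \tensor \OP(k+i)$ for $i = -n,\dots,0$ (zero when $k+i \leq 0$), and whose cohomology sheaves are exactly the $F_{k-n-1}\Nmod^i$. I would compute $\mathbb{E}xt^{\bullet}(\Emod_k^{\bullet}, \OP)$ in two ways, via the two hyperExt spectral sequences, and extract the desired formula for $\Ext^i(F_{k-n-1}\Nmod^0,\OP)$ by comparison.

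The first spectral sequence uses the term filtration:
\[
	{}'E_1^{p,q} = \Ext^q\bigl(\Emod_k^{-p}, \OP\bigr) = H^0\bigl(X,\OmX{n-p}(k-p)\bigr)^{\vee} \tensor H^q\bigl(P, \OP(-(k-p))\bigr).
\]
Since $\Emod_k^{-p} \neq 0$ forces $k - p \geq 1$, the line bundle $\OP(-(k-p))$ has negative degree, and so $H^q\bigl(P,\OP(-(k-p))\bigr) = 0$ for every $q < d$. The whole sub-diagram $q < d$ therefore vanishes, which immediately gives
\[
	\mathbb{E}xt^i(\Emod_k^{\bullet},\OP) = 0 \quad \text{for every } i < d.
\]

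The second spectral sequence uses the cohomology filtration:
\[
	{}''E_2^{p,q} = \Ext^p\bigl(F_{k-n-1}\Nmod^{-q},\OP\bigr) \Longrightarrow \mathbb{E}xt^{p+q}(\Emod_k^{\bullet},\OP).
\]
By \eqref{eq:FsNp}, for $q \geq 1$ the sheaf $F_{k-n-1}\Nmod^{-q}$ is locally free, isomorphic to $V_{-q} \tensor \OP$ with $V_{-q} = F^{n+1-k}H^{n-q}(X,\CC) / F^{n-k}H^{n-q-2}(X,\CC)$, so the column $q \geq 1$ of the $E_2$-page is concentrated in the row $p=0$ with value $V_{-q}^{\vee}$. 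Only the column $q = 0$, equal to $\Ext^p(F_{k-n-1}\Nmod^0,\OP)$, carries higher-$p$ entries. Because of this L-shaped support, the only possibly nonzero differential on any page is $d_r \colon {}''E_r^{0,r-1} \to {}''E_r^{r,0}$ for $r \geq 2$; every other differential either originates in, or lands in, a zero spot.

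An induction on $i$, driven by the vanishing $\mathbb{E}xt^i = 0$ for $i \leq d-1$ from the first spectral sequence, now finishes the argument. The cases $i = 0$ and $i = 1$ fall out immediately, since $E_\infty^{0,0} = E_2^{0,0}$ and $E_\infty^{1,0} = E_2^{1,0}$ are untouched by differentials. For $2 \leq i \leq d-1$, the surjectivity of $d_i \colon V_{-(i-1)}^{\vee} \to \Ext^i(F_{k-n-1}\Nmod^0,\OP)$ comes from $E_\infty^{i,0} = 0$, while its injectivity comes from $E_\infty^{0,i-1} = 0$ (itself forced by $\mathbb{E}xt^{i-1} = 0$ at the previous step). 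So $d_i$ is an isomorphism, which, after the index translation $V_{-(i-1)} = F^{n+1-k}H^{n+1-i}(X,\CC) / F^{n-k}H^{n-1-i}(X,\CC)$, gives exactly the claimed formula. The main obstacle is bookkeeping: one must verify that all higher-page differentials involving the row $p \geq 1$ of the column $q=0$ and the column $p = 0$ of the row $q \geq 1$ are indeed as sparse as claimed, so that the two s.s.\ can be matched cleanly; once this is checked, the vanishing from the first s.s.\ rigidly pins down the $E_2$-differentials of the second.
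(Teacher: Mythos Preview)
Your proposal is correct and follows essentially the same route as the paper: both arguments use the complex $\Emod_k^{\bullet}$ from \subsecref{subsec:vanishing}, compare the two hyper-$\Ext$ spectral sequences, and exploit the L-shaped support of the second one to read off the desired isomorphisms. The paper is terser about why the $E_2$-page of the second spectral sequence already gives the answer, while you spell out the induction on pages; the content is the same.
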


\begin{proof}
We use the locally free complex $\shE_k^{\bullet}$ from \subsecref{subsec:vanishing};
its cohomology sheaf in degree $i$ is the sheaf $F_{k-n-1} \Nmod^i$.
When we apply the functor $\Hom_P(\argbl, \OP)$ to the complex, we obtain two
spectral sequences; the terms of the first one are
\[
	\primeE_1^{p,q} = \Ext_P^q \bigl( \shE_k^{-p}, \OP \bigr) \simeq 
			H^{d-q} \bigl( P, \shE_k^{-p} \tensor \OP(-d-1) \bigr)^{\vee},
\]
by Serre Duality. Now each $\shE_k^{-p}$ is either zero (when $k-p \leq 0$), or a sum
of positive line bundles $\OP(k-p)$, and so $\primeE_1^{p,q}$ can only be nonzero if
$q = d$ and $0 \leq p \leq k-1-d$. The limit of the spectral sequence is therefore zero
whenever $p+q < d$. We conclude that the second spectral sequence, with terms
\[
	\pprimeE_2^{p,q} = \Ext_P^p \bigl( F_{k-n-1} \Nmod^{-q}, \OP \bigr)
\]
converges to zero in degrees $p+q < d$. For $q < 0$, each $F_{k-n-1} \Nmod^{-q}$ is a
trivial bundle, and so $\pprimeE_2^{p,q} = 0$ unless $p=0$ or $q=0$. This implies
that $\pprimeE_2^{0,p-1} \simeq \pprimeE_2^{p,0}$ for $2 \leq p \leq d-1$. Since
$\pprimeE_2^{p,0} = \Ext_P^p \bigl( F_{k-n-1} \Nmod^0, \OP \bigr)$ and
\[
	\pprimeE_2^{0,p-1} = \Hom \bigl( F_{k-n-1} \Nmod^{1-p}, \OP \bigr)
		\simeq \left( \frac{F^{n+1-k} H^{n+1-p}(X, \CC)}{F^{n-k} H^{n-1-p}(X, \CC)}
		\right)^{\vee}
\]
by \eqref{eq:FsNp}, we get the assertion.
\end{proof}

\begin{theorem} \label{thm:Ext-FkM}
For all integers $0 \leq i \leq d-1$, we have
\[
	\Ext_p^i \bigl( F_k \Mmod, \OP \bigr) \simeq
		\left( \frac{F^{n+1-k} H^{n+1-i}(X, \CC)}{F^{n-k} H^{n-1-i}(X, \CC)} \right)^{\vee}.
\]
In particular, $\Hom_P \bigl( F_k \Mmod, \OP \bigr) = 0$, and $\Ext_P^1 \bigl( F_k
\Mmod, \OP \bigr) \simeq \bigl( F^{n+1-k} \Hprim{n}(X, \CC) \bigr)^{\vee}$.
\end{theorem}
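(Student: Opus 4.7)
The plan is to apply $\Hom_P(\argbl, \OP)$ to the short exact sequence from Corollary~\ref{cor:Dmod2},
\[
0 \to F^{n+1-k} \Hprim{n}(X, \CC) \tensor \OP \to F_{k-n-1} \Nmod^0 \to F_k \Mmod \to 0,
\]
and read off $\Ext_P^i(F_k \Mmod, \OP)$ from the resulting long exact sequence. Two ingredients are already available. First, the leftmost term is a trivial vector bundle, so $\Hom_P$ against $\OP$ gives its dual and the higher $\Ext_P^i$ vanish because $H^i(P, \OP) = 0$ for $i > 0$. Second, Lemma~\ref{lem:duality-N0} supplies $\Ext_P^i(F_{k-n-1}\Nmod^0, \OP)$: it is zero for $i = 0, 1$, and equals $\bigl(F^{n+1-k} H^{n+1-i}(X,\CC) / F^{n-k} H^{n-1-i}(X,\CC)\bigr)^{\vee}$ for $2 \leq i \leq d-1$.

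Plugging these into the long exact sequence produces in one sweep: $\Hom_P(F_k\Mmod, \OP) = 0$, the connecting map identifies $\Ext_P^1(F_k\Mmod, \OP)$ with $\Hom_P(F^{n+1-k}\Hprim{n}(X,\CC) \tensor \OP, \OP) = \bigl(F^{n+1-k}\Hprim{n}(X,\CC)\bigr)^{\vee}$, and for $i \geq 2$ the sequence collapses to an isomorphism $\Ext_P^i(F_k\Mmod, \OP) \simeq \Ext_P^i(F_{k-n-1}\Nmod^0, \OP)$, which matches the claimed formula by Lemma~\ref{lem:duality-N0}.

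It remains to check that the uniform expression in the statement specializes correctly to the cases $i = 0, 1$. The implicit map in the quotient should be read as the Lefschetz operator $L \colon H^{n-1-i}(X,\CC) \to H^{n+1-i}(X,\CC)$, which shifts the Hodge filtration by one. For $i = 0$, Hard Lefschetz gives an isomorphism $L \colon H^{n-1}(X) \xrightarrow{\sim} H^{n+1}(X)$ respecting Hodge filtrations up to the shift, so $L\bigl(F^{n-k} H^{n-1}\bigr) = F^{n+1-k} H^{n+1}$ and the quotient is indeed zero, consistent with $\Hom_P(F_k\Mmod, \OP) = 0$. For $i = 1$, the Lefschetz decomposition $H^n(X) = \Hprim{n}(X) \oplus L H^{n-2}(X)$ is a direct sum of Hodge structures, so $F^{n+1-k} H^n / L(F^{n-k} H^{n-2}) \simeq F^{n+1-k}\Hprim{n}(X,\CC)$, matching the computation of $\Ext_P^1$ above.

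The bookkeeping is elementary; the only subtle point is interpreting the numerator/denominator in the statement in a way that unifies all three regimes $i = 0$, $i = 1$, and $i \geq 2$. That is what the Hard Lefschetz / primitive decomposition argument in the last paragraph accomplishes, and I expect it to be the sole obstacle beyond routine homological algebra.
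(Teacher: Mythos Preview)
Your proposal is correct and follows essentially the same route as the paper: apply $\Hom_P(\argbl,\OP)$ to the filtered short exact sequence coming from Corollary~\ref{cor:Dmod2} and read off the answer using Lemma~\ref{lem:duality-N0} together with the vanishing of higher $\Ext$'s for the trivial bundle. Your explicit verification via Hard Lefschetz and the primitive decomposition that the uniform formula specializes correctly at $i=0,1$ is a welcome addition that the paper leaves implicit.
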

\begin{proof}
The short exact sequence in Corollary~\ref{cor:Dmod2} becomes
\begin{equation} \label{eq:SES-for-Ext}
\begin{diagram}[l>=2em]
	0 &\rTo& F^{n+1-k} \Hprim{n}(X, \CC) \tensor \OP &\rTo& F_{k-n-1} \Nmod^0 
		&\rTo& F_k \Mmod &\rTo& 0
\end{diagram}
\end{equation}
after passing to $F_{k-n-1}$. The result now follows from Lemma~\ref{lem:duality-N0}
by looking at the long exact sequence for the functor $\Hom_P(\argbl, \OP)$.
\end{proof}

The proof shows that the isomorphism between $\Ext_P^1 \bigl( F_k
\Mmod, \OP \bigr)$ and the group $\bigl( F^{n+1-k} \Hprim{n}(X, \CC) \bigr)^{\vee}$ is
given by the extension class of the sequence \eqref{eq:SES-for-Ext}. This class is
an element $\eps_k \in \Ext_P^1 \bigl( F_k \Mmod, F^{n+1-k} \Hprim{n}(X, \CC) \tensor
\OP \bigr)$. Any linear map $f \colon F^{n+1-k} \Hprim{n}(X, \CC) \to \CC$ defines a homomorphism
\[
	\fl \colon \Ext_P^1 \bigl( F_k \Mmod, F^{n+1-k} \Hprim{n}(X, \CC) \tensor \OP \bigr)
		\to \Ext_P^1 \bigl( F_k \Mmod, \OP \bigr),
\]
and $\fl(\eps_k)$ is the element corresponding to $f$ under the isomorphism in the
theorem. 

\begin{note}
Likewise, we have $\Ext_P^1 \bigl( \Gr_k^F \Mmod, \OP \bigr) \simeq
\Hprim{n+1-k,k-1}(X)$.
\end{note}

\subsection{Duality theorems}

In this section, we apply the general duality theorem of \cite{mhmduality} to the
filtered $\Dmod$-module $(\Mmod, F)$. Note that $(\Mvan, F)$ underlies a polarized
Hodge module of weight $\dim \famX = d+n-1$ on $P$, and that we have $F_k \Mvan =
F_{k+n} \Mmod$. Since we have already computed the characteristic module
\[
	\shC(\Mmod, F) \simeq \psiu \OmX{n}
\]
in Corollary~\ref{cor:charM}, the following result is a direct consequence of the duality
theorem. As for notation, we let $\psi \colon \famY \to X$ and $\pi \colon \famY \to P$
denote the two projection maps, and set $\shC = \shC(\Mmod, F)$ and $\shG_k = \Gr_k^F
\Mmod$ for $k \in \ZZ$.

\begin{theorem}
For every $k \in \ZZ$, we have an exact sequence
\begin{equation} \label{eq:dual1}
\begin{diagram}[l>=1.6em]
\shHom(\shG_{n+1-k}, \OP) &\rIntoBold& \shG_k &\rTo& \pil \bigl( \shC \tensor
\OfamY(k) \bigr) &\rOnto& \shExt^1(\shG_{n+1-k}, \OP),
\end{diagram}
\end{equation}
as well as isomorphisms
\begin{equation} \label{eq:dual2}
	R^i \pil \bigl( \shC \tensor \OfamY(k) \bigr) 
		\simeq \shExt^{i+1}(\shG_{n+1-k}, \OP)
\end{equation}
for every $i \geq 1$.
\end{theorem}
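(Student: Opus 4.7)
The plan is to invoke the general duality theorem for filtered Cohen--Macaulay $\Dmod$-modules established in \cite{mhmduality} and apply it directly to $(\Mvan, F)$, then translate the resulting statement into one about $(\Mmod, F)$ via the identity $F_k \Mvan = F_{k+n} \Mmod$. In this sense the proof is essentially a matter of unwinding the preceding machinery.

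First I would verify the hypotheses of the duality theorem. By Corollary~\ref{cor:Dmod2}, $(\Mvan, F)$ underlies the polarized Hodge module $\jlsl \Vvan$ of weight $\dfamX = d+n-1$, and every such polarized Hodge module is automatically filtered Cohen--Macaulay, which is the standing assumption of \cite{mhmduality}. The second input is the characteristic sheaf, already identified in Corollary~\ref{cor:charM} as $\shC(\Mmod, F) \simeq \psiu \OmX{n}$, supported on the smooth subvariety $\famY \subseteq \PP(\Theta_P)$ introduced in \subsecref{subsec:universal}; its derived pushforward to $P$ is computed by the second projection $\pi \colon \famY \to P$.

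Feeding these two inputs into the general duality theorem produces, for each $k \in \ZZ$, a four-term exact sequence and higher isomorphisms of exactly the shape claimed. On one side of the sequence one finds a graded piece $\Gr_k^F$ together with $\shHom$ and $\shExt^i$ of a complementary graded piece into $\OP$; on the other side one finds the derived pushforward along $\pi$ of $\shC$ twisted by an appropriate power of $\OfamY(1)$. Translating from $(\Mvan, F)$ to $(\Mmod, F)$ using $F_k \Mvan = F_{k+n} \Mmod$, and combining with the Tate-twist convention from \subsecref{subsec:conventions} and the weight $w = d+n-1$ of the Hodge module, the symmetric center of the duality lands at $(n+1)/2$, so $\shG_k = \Gr_k^F \Mmod$ pairs with $\shG_{n+1-k} = \Gr_{n+1-k}^F \Mmod$ and the twist becomes $\OfamY(k)$.

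The only real obstacle is the bookkeeping of shifts. Several normalization conventions --- the weight of the Hodge module, the Tate-twist convention shifting the filtration by $-k$, the relationship $F_k \Mvan = F_{k+n} \Mmod$, and the natural placement of $\OfamY(k)$ on the characteristic-variety side --- must be lined up carefully to produce the claimed indices on the nose. Beyond this indexing exercise there is no substantive difficulty; the heavy lifting has been done in Corollary~\ref{cor:charM} and in the duality theorem of \cite{mhmduality}.
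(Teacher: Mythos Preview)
Your proposal is correct and matches the paper's own argument: the paper explicitly states that the theorem is ``a direct consequence of the duality theorem'' of \cite{mhmduality}, using the computation of the characteristic sheaf $\shC(\Mmod,F)\simeq\psiu\OmX{n}$ from Corollary~\ref{cor:charM}, and that the underlying reason the duality theorem applies is that $\Gr^F\Mmod$ is Cohen--Macaulay over $\Sym\Theta_P$---precisely the points you identify. The paper also mentions an alternative route via the spectral sequence coming from the Koszul resolution of Lemma~\ref{lem:Koszul}, but defers the details to \cite{thesis}; your chosen route is the one the paper itself follows.
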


The proof in \cite{mhmduality} is based on the fact that $\Gr^F \! \Mmod$ is
Cohen-Macaulay as a graded module over the symmetric algebra $\Sym \Theta_P$. It is
also possible to derive the theorem from a careful analysis of the spectral sequence
\[
	E_1^{p,q} = R^q \pil \bigl( \phiu \OmX{p}(k-n+p) \bigl) \tensor \OP(k-n+p)		
		\Longrightarrow R^{p+q-n} \pil \big( \psiu \OmX{n} \tensor \OfamY(k) \bigr),
\]
coming from the resolution in Lemma~\ref{lem:Koszul}; details can be found in
\cite{thesis}. We note that the first map in \eqref{eq:dual1} is induced by the 
intersection pairing: its restriction to $\Psm$ is a map of vector bundles
\[
	\bigl( F^{k-1} \shHO / F^k \shHO \bigr)^{\vee} \to 
		F^{n-k} \shHO / F^{n-k+1} \shHO,
\]
which, fiber by fiber, is given by $(2 \pi i)^{n-1}$ times integration over the smooth
hypersurface $\famX_p$ (up to a sign factor).

Since the duality theorem actually produces an exact triangle in the derived category
$D^b(P)$, we can take the push-forward to a point to arrive at the following global
statement. Note that $\Hom_P \bigl( \Gr_{n+1-k}^F \Mmod \bigr) = 0$ by
Theorem~\ref{thm:Ext-FkM}.

\begin{proposition} \label{prop:duality-global}
For each $k \in \ZZ$, we have an exact sequence
\begin{diagram}[l>=2em]
	H^0 \bigl( P, \Gr_k^F \Mmod \bigr) &\rIntoBold&
	H^0 \bigl( \famY, \psiu \OmX{n} \tensor \OfamY(k) \bigr) &\rOnto&
	\Ext_P^1 \bigl( \Gr_{n+1-k}^F \Mmod, \OP \bigr).
\end{diagram}
Moreover, we have $\Ext_P^{i+1} \bigl( \Gr_{n+1-k}^F \Mmod, \OP \bigr) \simeq H^i \bigl(
\famY, \psiu \OmX{n} \tensor \OfamY(k) \bigr)$ for all $i \geq 1$ and all values of
$k$.
\end{proposition}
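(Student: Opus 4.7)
The plan is to globalize the sheaf-level duality statement of the preceding theorem by pushing it forward to a point. Equations \eqref{eq:dual1} and \eqref{eq:dual2} together are precisely the long exact sequence of cohomology sheaves associated to a single distinguished triangle
\[
\Gr_k^F \Mmod \to \derR\pil \bigl( \psiu \OmX{n} \tensor \OfamY(k) \bigr) \to \derR\shHom_{\OP}\bigl( \Gr_{n+1-k}^F \Mmod, \OP \bigr)[1] \xrightarrow{+1}
\]
in $D^b(P)$; the Cohen--Macaulay duality argument of \cite{mhmduality} actually produces such a triangle, and the exact sequence and isomorphisms cited in the excerpt are its shadow on cohomology sheaves. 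I would take this triangle as the starting point.

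Applying the functor $\derR\Gamma(P, \argbl)$ yields a distinguished triangle in $D^b(\pt)$. Using properness of $\pi$ to identify $\derR\Gamma(P, \derR\pil \argbl) \simeq \derR\Gamma(\famY, \argbl)$, together with the standard identity $\derR\Gamma(P, \derR\shHom_{\OP}(\argbl, \OP)) \simeq \derR\Hom_P(\argbl, \OP)$, the associated long exact sequence reads
\[
\cdots \to H^i(P, \Gr_k^F \Mmod) \to H^i\bigl(\famY, \psiu \OmX{n} \tensor \OfamY(k)\bigr) \to \Ext_P^{i+1}\bigl(\Gr_{n+1-k}^F \Mmod, \OP\bigr) \to H^{i+1}(P, \Gr_k^F \Mmod) \to \cdots.
\]

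To cut this down to the asserted three-term sequence and isomorphisms, I need two vanishings. First, $H^i(P, \Gr_k^F \Mmod) = 0$ for all $i \geq 1$: Corollary~\ref{cor:vanishing} provides $H^i(P, F_k \Mmod) = 0$ for $i \geq 1$, and since $F_k \Mmod = 0$ for $k \leq 0$, induction on $k$ via the short exact sequence $0 \to F_{k-1}\Mmod \to F_k \Mmod \to \Gr_k^F \Mmod \to 0$ transfers this to the graded pieces. Second, $\Hom_P(\Gr_{n+1-k}^F\Mmod, \OP) = 0$: composing any such homomorphism with the surjection $F_{n+1-k}\Mmod \onto \Gr_{n+1-k}^F \Mmod$ would produce a nonzero element of $\Hom_P(F_{n+1-k}\Mmod, \OP)$, contradicting Theorem~\ref{thm:Ext-FkM}.

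Plugging both vanishings into the long exact sequence delivers what we want: at $i = 0$, the three-term exact sequence (injectivity from the Hom-vanishing, surjectivity from $H^1(P, \Gr_k^F \Mmod) = 0$), and at $i \geq 1$, the asserted isomorphisms. The only nontrivial step is the first one: the argument really needs the local data \eqref{eq:dual1}--\eqref{eq:dual2} to come from a single distinguished triangle, which is not made explicit in the excerpt. Either one extracts this directly from \cite{mhmduality}, or one bypasses it by combining the Leray spectral sequence for $\derR\pil$ with the local-to-global Ext spectral sequence and splicing by hand, which is routine but less clean.
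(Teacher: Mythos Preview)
Your proof is correct and follows exactly the approach the paper intends: push the duality triangle forward to a point and use the two vanishings. The paper's text just before the proposition explicitly says ``the duality theorem actually produces an exact triangle in the derived category $D^b(P)$,'' so your main worry is already resolved there; the paper's own proof sketch is terser than yours and only cites the $\Hom$-vanishing, leaving the higher cohomology vanishing of $\Gr_k^F \Mmod$ (which you correctly obtain from Corollary~\ref{cor:vanishing} by induction on $k$) implicit.
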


\subsection{A curious vanishing theorem}

We digress to point out a curious application of the computations for the groups
$\Ext_P^i \bigl( \Gr_k^F \Mmod, \OP \bigr)$ when $i > 0$. We have obtained two different
expressions for these groups, one in Proposition~\ref{prop:duality-global}, the other
in Theorem~\ref{thm:Ext-FkM}. By comparing the two, we get the following statement
(which, I believe, is originally due to M.~Green).

\begin{proposition} \label{prop:curious-vanishing}
Let $X$ be a smooth projective variety of dimension $n$, and let $\OX(1)$ be a
sufficiently ample line bundle on $X$. For all $k \geq 0$ and $q > 0$, we have
\[
	H^q \bigl( X, \OmX{n} \tensor \Sym^k \NXQ \bigr) \simeq
	H^q \bigl( \famY, \psiu \OmX{n} \tensor \OfamY(k) \bigr) 
		\simeq \Hprim{n-k,q+k}(X),
\]	
where $\NXQ$ is the normal bundle for the embedding of $X$ into projective space.
\end{proposition}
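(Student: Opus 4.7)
The strategy is to compute $\Ext_P^{q+1} \bigl( \Gr_{n+1-k}^F \Mmod, \OP \bigr)$ in two independent ways and equate the answers, yielding all three isomorphisms in the proposition.

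The first isomorphism is purely geometric: $\psi \colon \famY \to X$ is the projective bundle $\PP(\NXQ)$ from \subsecref{subsec:universal}, and the restriction of $\OPX(1)$ to $\famY$ corresponds to the tautological quotient bundle. Therefore $\psi_{\ast} \OfamY(k) \simeq \Sym^k \NXQ$ for $k \geq 0$, while $R^j \psi_{\ast} \OfamY(k) = 0$ for $j > 0$. The projection formula together with the degeneration of the Leray spectral sequence gives
\[
	H^q \bigl( \famY, \psiu \OmX{n} \tensor \OfamY(k) \bigr)
		\simeq H^q \bigl( X, \OmX{n} \tensor \Sym^k \NXQ \bigr).
\]

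For the second isomorphism, Proposition~\ref{prop:duality-global} identifies
\[
	H^q \bigl( \famY, \psiu \OmX{n} \tensor \OfamY(k) \bigr)
		\simeq \Ext_P^{q+1} \bigl( \Gr_{n+1-k}^F \Mmod, \OP \bigr)
\]
for each $q \geq 1$. I would compute the right-hand side independently by applying $\Hom_P(\argbl, \OP)$ to the short exact sequence $0 \to F_{n-k} \Mmod \to F_{n+1-k} \Mmod \to \Gr_{n+1-k}^F \Mmod \to 0$ and inserting the formulas of Theorem~\ref{thm:Ext-FkM}. The key simplification---already visible in the Note after that theorem---is that by the Lefschetz decomposition $H^m(X, \CC) = \bigoplus_{r \geq 0} L^r \Hprim{m-2r}(X, \CC)$ and the fact that $L$ shifts the Hodge filtration by one, each quotient $F^p H^m(X, \CC) / L \bigl( F^{p-1} H^{m-2}(X, \CC) \bigr)$ appearing in the formula of Theorem~\ref{thm:Ext-FkM} is naturally isomorphic to $F^p \Hprim{m}(X, \CC)$. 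After this rewriting, both maps $\Ext^j \bigl( F_{n+1-k} \Mmod, \OP \bigr) \to \Ext^j \bigl( F_{n-k} \Mmod, \OP \bigr)$ in the long exact sequence, for $j = q$ and $j = q+1$, are dual to the inclusion $F^{k+1} \Hprim{\bullet} \hookrightarrow F^k \Hprim{\bullet}$ and hence surjective. The long exact sequence therefore collapses to give
\[
	\Ext_P^{q+1} \bigl( \Gr_{n+1-k}^F \Mmod, \OP \bigr) \simeq
		\bigl( F^k \Hprim{n-q}(X, \CC) / F^{k+1} \Hprim{n-q}(X, \CC) \bigr)^{\vee}
		\simeq \bigl( \Hprim{k, n-q-k}(X) \bigr)^{\vee},
\]
and primitive Serre duality on $X$ identifies the last group with $\Hprim{n-k, q+k}(X)$.

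The main obstacle will be the bookkeeping in the long exact sequence: verifying the Lefschetz identification term by term, confirming that the relevant connecting homomorphism vanishes, and staying within the validity range $0 \leq q+1 \leq d-1$ of Theorem~\ref{thm:Ext-FkM}. For $q$ outside this range, both sides of the proposition vanish for trivial reasons (dimension of $X$ and boundedness of the Hodge filtration on primitive cohomology), so the identification extends without loss.
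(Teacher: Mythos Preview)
Your approach is essentially the same as the paper's: the paper simply says the first isomorphism is the projection formula for the projective bundle $\psi \colon \famY = \PP(\NXQ) \to X$, and the second ``follows immediately by combining the result of Proposition~\ref{prop:duality-global} and Theorem~\ref{thm:Ext-FkM}.'' You have spelled out this combination explicitly via the long exact sequence for $\Gr^F$, which is exactly the intended computation (cf.\ the Note after Theorem~\ref{thm:Ext-FkM}). The one step that deserves a word of justification is your claim that the map $\Ext_P^j(F_{n+1-k}\Mmod,\OP) \to \Ext_P^j(F_{n-k}\Mmod,\OP)$ is dual to the filtration inclusion; this follows from the functoriality in $k$ of the complexes $\Emod_k^{\bullet}$ used in Lemma~\ref{lem:duality-N0} and of the sequence \eqref{eq:SES-for-Ext}, but is worth noting rather than asserting.
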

\begin{proof}
Recall that $\psi \colon \famY \to X$ is the projective bundle $\PP \bigl( \NXQ
\bigr)$. For $k \geq 0$, we thus have $\psi_{\ast} \OfamY(k) \simeq \Sym^k \NXQ$. The
first asserted isomorphism is then obtained by push-forward along the map $\psi$.
The second isomorphism now follows immediately by combining the result of
Proposition~\ref{prop:duality-global} and Theorem~\ref{thm:Ext-FkM}.
\end{proof}

In particular, one gets a vanishing theorem for the ample vector bundle $\NXQ$. For
example, the condition $q + k \geq n + 1$ is sufficient to make the cohomology group
$H^q \bigl( X, \OmX{n} \tensor \Sym^k \NXQ \bigr)$ be zero. This does not seem to
follow from the standard vanishing theorems for ample vector bundles, such as
Griffiths' Theorem \cite{LazarsfeldII}*{Theorem~7.3.1 on p.~90}, because the factor
of $\det \NXQ$ is missing. At the same time, whenever $X$ has nontrivial primitive
cohomology in degree $(n-k,q+k)$, one gets an example where the group in question
is \emph{not} zero. This places restrictions on the kind of vanishing theorem one can
get for general ample vector bundles.

\subsection{The Serre conditions}
\label{subsec:Serre}

The sheaves $F_k \Mmod$ are natural extensions of the Hodge bundles $F^{n-k} \shHO$
on $\Psm$. While they cannot in general be locally free, we nevertheless expect that $F_k \Mmod$
should have good properties when the line bundle $\OX(1)$ is sufficiently
ample---basically because the complexity of the dual variety appears in very high
codimension. In this section, we illustrate this by proving that each $F_k \Mmod$ in
the range $1 \leq k \leq n$ is a reflexive sheaf.

More precisely, recall that a coherent sheaf $\shF$ on a nonsingular algebraic
variety $P$ is said to satisfy (a slight modification of) \define{Serre's condition
$S_m$}, if the inequality
\[
	\depth \shF_p \geq \min \bigl( m, \dim \shO_{P,p} \bigr)
\]
holds at every point $p \in P$. A locally free sheaf satisfies $S_m$ for all values
of $m$; on the other hand, condition $S_1$ is equivalent to being
torsion-free, and $S_2$ to being reflexive. There is also a useful
criterion for verifying Serre's condition: $\shF$ satisfies $S_m$ if and only if,
for every $i > 0$, the codimension of the support of $\shExt^i(\shF,
\OP)$ is at least $i+m$ (see \cite{Popa}*{Proposition~7.5} for a careful proof). In
particular, $\shF$ is then locally free in codimension $m$.

The following theorem shows that the sheaves $\Gr_k^F \Mmod$ and $F_k \Mmod$ are
well-behaved when $1 \leq k \leq n$, at least when $\OX(1)$ is sufficiently ample.

\begin{theorem} \label{thm:MSp}
Fix an integer $m \geq 1$. If the line bundle $\OX(1)$ is ample enough, then
each sheaf $\Gr_k^F \Mmod$ in the interval $1 \leq k \leq n$ satisfies Serre's
condition $S_m$.
\end{theorem}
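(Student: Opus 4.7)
The plan is to apply the criterion, recalled just before the theorem, that a coherent sheaf $\shF$ on $P$ satisfies $S_m$ if and only if $\codim_P \supp \shExt^i(\shF, \OP) \geq i + m$ for every $i > 0$. Combining the duality isomorphisms \eqref{eq:dual1}--\eqref{eq:dual2} (applied with the roles of $k$ and $n+1-k$ exchanged) with the identification $\shC(\Mmod, F) \simeq \psiu \OmX{n}$ from Corollary~\ref{cor:charM}, one reads off
\[
	\shExt^i \bigl( \Gr_k^F \Mmod, \OP \bigr) \simeq R^{i-1} \pil \bigl( \psiu \OmX{n} \tensor \OfamY(n+1-k) \bigr) \quad \text{for $i \geq 2$,}
\]
together with an explicit description of $\shExt^1 \bigl( \Gr_k^F \Mmod, \OP \bigr)$ as the cokernel of a map into the same pushforward. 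Since $\pi \colon \famY \to P$ has image $\dualX$ and the support of $R^{i-1} \pil$ lies inside the upper-semicontinuity locus $Z_{i-1} = \{p \in P : \dim \famY_p \geq i-1\}$, verifying $S_m$ in the range $i \geq 2$ reduces to the geometric estimate $\codim_P Z_r \geq r + m + 1$ for every $r \geq 1$.

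The main geometric input is therefore this codimension bound. A point $p \in Z_r$ parametrizes a hyperplane section of $X$ whose singular locus has dimension at least $r$, which forces the defining section of $\OX(1)$ to vanish to order two along some $r$-dimensional subvariety $Y \subseteq X$. The plan for the bound is a parameter count: the dimension of the Hilbert scheme of such subvarieties $Y$ is controlled by numerical data of $X$, while the number of linear conditions that order-two vanishing along $Y$ imposes on sections of $\shL^N$ grows polynomially in $N$ (the growth rate can be extracted from standard vanishing theorems applied to $\shL^N$-twisted ideal sheaves $\mathcal{I}_Y^2$). Choosing $N$ large as a function of $m$ forces $\codim_P Z_r$ to exceed $r + m + 1$ for every $r \geq 1$. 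The case $i = 1$ must be handled separately: $\shExt^1$ is automatically supported on $\dualX$, and one further shows that at the open stratum of $\dualX$ parametrizing hyperplane sections with a single ordinary double point the cokernel vanishes, by a local computation using the well-known structure of the minimal extension across a simple singularity; deeper strata are then absorbed into the same ampleness hypothesis via the estimate on $Z_r$.

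The main obstacle I expect is making the codimension estimate on $Z_r$ uniform in $r$, with a single choice of $N$ depending only on $m$ and on $X$ that works for every $r \geq 1$ simultaneously; the subtlety is that the Hilbert-scheme dimension of $r$-dimensional subvarieties of $X$ grows with $r$, so the growth in $N$ of the jet-vanishing conditions must dominate uniformly. This is precisely the point at which the hypothesis that $\OX(1)$ be a sufficiently high power of a very ample line bundle enters, and working out the quantitative version of this comparison is the technical heart of the proof.
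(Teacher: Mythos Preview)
Your overall architecture matches the paper: you invoke the $\shExt$ criterion for $S_m$, apply the duality \eqref{eq:dual1}--\eqref{eq:dual2} with $k \leftrightarrow n+1-k$, and reduce to a codimension estimate on loci of bad hypersurfaces.

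For $i \geq 2$, your Hilbert-scheme parameter count is more elaborate than necessary. The paper simply observes that if $\dim \famY_p \geq 1$ then $\famX_p$ has infinitely many singular points, so the locus where the singular set is positive-dimensional is contained in $P_1(N)$ for every $N$; a direct jet-separation argument (Lemma~\ref{lem:jet-sep} and Proposition~\ref{prop:bound-sings}) then gives $\codim P_1(N) \geq N$. Your worry about uniformity in $r$ is also misplaced: the fibers of $\pi \colon \famY \to P$ have dimension at most $n-1$, so only the finite range $2 \leq i \leq n$ is at issue, and a single choice $N = n+m$ already suffices.

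For $i = 1$ there is a genuine gap. Your plan handles only the open stratum of $\dualX$ (a single ordinary double point) and then asserts that ``deeper strata are absorbed into the estimate on $Z_r$''. But the deeper strata include hypersurfaces with several isolated singular points, or with one isolated non-ordinary singularity; these have $\dim \famY_p = 0$ and hence lie in no $Z_r$ with $r \geq 1$. For instance, the locus with two ordinary double points typically has codimension $2$, and your argument says nothing about $\shExt^1$ there, so you cannot reach $S_m$ for any $m \geq 2$. The paper avoids any local computation at the minimal extension: using \eqref{eq:dual1} it identifies $\shExt^1(\Gr_{n+1-k}^F \Mmod, \OP)$ as the cokernel of $\Gr_k^F \Mmod \to \pil(\psiu \OmX{n} \tensor \OfamY(k))$, and then shows directly that the restriction map $H^0(X, \OmX{n}(k)) \to H^0(\famY_p, \OmX{n}(k))$ is surjective whenever $\famX_p$ has at most $m$ singular points, each of multiplicity at most $m+1$, again by jet separation. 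The support of $\shExt^1$ is thus contained in the locus $P_2$ of hypersurfaces violating one of these two conditions, and Proposition~\ref{prop:bound-sings} gives $\codim P_2 \geq m+1$.
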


The proof is based on bounding the codimension of the set of hyperplane sections with
many singular points, or with singular points of high multiplicity.

\begin{proposition} \label{prop:bound-sings}
Fix two positive integers $N$ and $r$. If $\OX(1)$ is sufficiently ample, then the
linear system $P$ has the following properties:
\begin{enumerate}[label=(\alph{*}), ref=(\alph{*})]
\item The subset $P_1(N) \subseteq P$ of hypersurfaces with at least $N$ singular points
has codimension at least $N$ in $P$. \label{en:sing1}
\item The subset $P_2(r) \subseteq P$ of hypersurfaces with a singular point of
multiplicity at least $r$ has codimension at least $r-1$ in $P$. \label{en:sing2}
\end{enumerate}
\end{proposition}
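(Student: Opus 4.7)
I would prove both parts by an incidence-variety dimension count; the only delicate input is a uniform jet-separation property that holds once $\OX(1)$ is sufficiently ample.

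For \ref{en:sing1}, introduce
\[
	Z_N = \menge{(x_1, \dotsc, x_N, D) \in X^N \times P}{\text{the $x_i$ are distinct singular points of $D$}},
\]
and bound $\dim Z_N$ via its first projection to the open locus of distinct $N$-tuples in $X^N$. The fiber over $(x_1, \dotsc, x_N)$ is the projectivization of the subspace of $V = H^0 \bigl( X, \OX(1) \bigr)$ consisting of sections vanishing to order $\geq 2$ at each $x_i$. Provided the joint $1$-jet evaluation map
\[
	V \to \bigoplus_{i=1}^N \OX(1) \tensor \OX / J_{x_i}^2
\]
is surjective for every such $N$-tuple, this fiber has codimension $N(n+1)$ in $P$, so $\dim Z_N \leq Nn + \dim P - N(n+1) = \dim P - N$, and $P_1(N)$---being the image of $Z_N$ under the second projection---has codimension at least $N$ in $P$.

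For \ref{en:sing2}, use the analogous incidence $W_r \subseteq X \times P$ of pairs $(x, D)$ such that $D$ has multiplicity at least $r$ at $x$. Its fiber over $x \in X$ is the projectivization of the subspace of sections of $\OX(1)$ lying in $J_x^r \cdot \OX(1)$, of codimension $\binom{n+r-1}{n} = \dim (\OX / J_x^r)$ in $V$ as soon as the $(r-1)$-jet map $V \to \OX(1) \tensor \OX / J_x^r$ is surjective uniformly in $x \in X$. Hence $\codim_P P_2(r) \geq \binom{n+r-1}{n} - n$, and the elementary inequality $\binom{n+r-1}{n} \geq n+r-1$ (a quick induction on $r$ starting from the equality case $r=2$) gives the asserted bound of $r-1$.

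The main obstacle is arranging the uniform surjectivity of the jet evaluation maps above, and this is precisely where the hypothesis ``$\OX(1)$ sufficiently ample'' does its work. Each surjectivity reduces, via the long exact cohomology sequence, to the vanishing of $H^1$ of an ideal-power twist of $\OX(1)$---taken over $X$ in part \ref{en:sing2}, and over the relative parameter space of $N$-point fat subschemes (or simply over $X^N$) in part \ref{en:sing1}. Both vanishings follow from relative Serre vanishing once $\OX(1)$ is replaced by a sufficiently high power of a very ample line bundle, with the exponent depending only on $N$ (respectively on $r$).
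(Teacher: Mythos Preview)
Your proposal is correct and follows essentially the same approach as the paper: both parts are proved by the identical incidence-variety dimension count (over $X^N$ for \ref{en:sing1}, over $X$ for \ref{en:sing2}), with the fiber dimensions controlled by the same jet-separation statement, and the same binomial inequality $\binom{n+r-1}{n} \geq n+r-1$ is used for \ref{en:sing2}. The only cosmetic difference is that the paper packages the jet separation as a standalone lemma (citing Lazarsfeld) rather than deriving it from relative Serre vanishing as you do.
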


\begin{proof}
As usual, let $d = \dim P$. 
To prove \ref{en:sing1}, let $\famS_1(N) \subseteq P \times X^N$ be the closure of
the set of points $(p, x_1, \dotsc, x_N)$ for which $x_1, \dotsc, x_N$ are distinct
singular points of the hypersurface $\famX_p$. Then $\famS_1(N)$ is irreducible, and
its fiber over a general point $(x_1, \dotsc, x_N) \in X^N$ has dimension $d -
N(n+1)$ by Lemma~\ref{lem:jet-sep}, provided that $\OX(1)$ is sufficiently ample. Therefore $\dim
\famS_1(N) = d-N$, and so its projection to $P$, which equals $P_1(N)$, has dimension
at most $d - N$.

The argument for \ref{en:sing2} is similar; this time, consider the set $\famS_2(r)
\subseteq P \times X$ of points $(p,x)$ such that $x \in \famX_p$ is a singular point
of multiplicity at least $r$. By Lemma~\ref{lem:jet-sep}, we can take $\OX(1)$
sufficiently ample to separate $r$-jets; then the fiber of $\famS_2(r)$ over a point
$x \in X$ has dimension at most $d - \binom{n+r-1}{r-1}$, and so the dimension of
$P_2(r)$ cannot be greater than
\[
	n + d - \binom{n+r-1}{r-1} \leq d-r+1. \qedhere
\]
\end{proof}

During the proof, we used the following lemma about separation of jets; it is a straightforward
generalization of \cite{LazarsfeldI}*{Theorem~5.1.17}.

\begin{lemma} \label{lem:jet-sep}
Fix two positive integers $N$ and $r$. If $\OX(1)$ is sufficiently ample, then for any
collection of $N$ distinct points $x_1, \dotsc, x_N \in X$, the restriction map
\begin{equation} \label{eq:jet-sep1}
	H^0 \bigl( X, \OX(1) \bigr) \to \bigoplus_{i=1}^N \OX(1) \tensor 
		\shO_{X, x_i} / \mathfrak{m}_{x_i}^{r+1}
\end{equation}
is surjective. 
\end{lemma}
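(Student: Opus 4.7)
The plan is to convert the surjectivity in \eqref{eq:jet-sep1} into a cohomology vanishing statement and then apply Kodaira vanishing on a blowup. Let $Z \subseteq X$ denote the zero-dimensional subscheme with ideal sheaf $I_Z = \prod_i \mathfrak{m}_{x_i}^{r+1}$, so that the target of \eqref{eq:jet-sep1} is $H^0(X, \OX(1) \tensor \shO_Z)$. From the short exact sequence $0 \to I_Z \to \shO_X \to \shO_Z \to 0$ tensored with $\OX(1)$, surjectivity of the restriction map is equivalent to the vanishing of $H^1\bigl(X, \OX(1) \tensor I_Z\bigr)$.

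To bound this group, let $\mu \colon \tilde X \to X$ be the blowup of $X$ at $x_1, \dotsc, x_N$, with exceptional divisors $E_1, \dotsc, E_N$. Standard properties of blowups at smooth points give $\mu_* \shO_{\tilde X}\bigl(-(r+1)\sum_i E_i\bigr) = I_Z$ with vanishing higher direct images, so the projection formula combined with the Leray spectral sequence yields
\[
H^1\bigl(X, \OX(1) \tensor I_Z\bigr) \simeq H^1\Bigl(\tilde X,\, \mu^* \OX(1) \tensor \shO_{\tilde X}\bigl(-(r+1) \textstyle\sum_i E_i\bigr)\Bigr).
\]
Using $K_{\tilde X} = \mu^* K_X + (n-1)\sum_i E_i$, Kodaira vanishing then reduces the problem to checking that the line bundle
\[
\mu^*\bigl(\OX(1) \tensor \omega_X^{-1}\bigr) \tensor \shO_{\tilde X}\bigl(-(r+n)\textstyle\sum_i E_i\bigr)
\]
is ample on $\tilde X$.

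The main obstacle is ensuring that a single choice of $\OX(1)$ produces this positivity \emph{uniformly} as $(x_1, \dotsc, x_N)$ varies over the complement of the big diagonal in $X^N$. Writing $\OX(1) = A^{\tensor m}$ for a fixed very ample line bundle $A$, it suffices to find $m_0$ depending only on $X$, $A$, $N$, $r$ such that $\mu^* A^{m_0} \tensor \shO_{\tilde X}(-(r+n)\sum_i E_i)$ is ample for every configuration of $N$ distinct points; once this is in hand, taking $m \gg m_0$ absorbs the twist by $\omega_X$. Such a uniform $m_0$ can be produced by a universal argument: form the family of simultaneous blowups over the configuration space $B = X^N \setminus \Delta$, use that the locus on which a given relative line bundle is ample on fibers is open, and handle the behavior at the boundary by a Noetherian induction on strata in a suitable compactification of $B$. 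Alternatively, since high tensor powers of an ample line bundle are $(N, r)$-jet ample---a property intrinsic to the line bundle and independent of the chosen points---one may simply generalize \cite{LazarsfeldI}*{Theorem~5.1.17} from $N=1$ to arbitrary $N$.
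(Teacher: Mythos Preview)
Your approach is correct and is precisely the standard argument behind \cite{LazarsfeldI}*{Theorem~5.1.17}: reduce to $H^1$-vanishing of the ideal sheaf, pass to the blowup, and apply Kodaira vanishing once the obvious line bundle is ample. The paper itself does not supply a proof of this lemma; it simply declares it ``a straightforward generalization of \cite{LazarsfeldI}*{Theorem~5.1.17}'' and moves on. So you have in fact written out more than the paper does, and your closing remark---that one just extends Lazarsfeld's argument from one point to $N$ points---is exactly the content of the paper's one-line justification.

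One small comment on the uniformity paragraph: your Noetherian-induction sketch over a compactified configuration space is more machinery than needed. The clean way to get a uniform bound (and the way Lazarsfeld's argument naturally generalizes) is to observe that if $A$ is very ample, then on the blowup $\mu \colon \tilde X \to X$ at any finite set of reduced points, the line bundle $\mu^* A^{\,n+1} \otimes \shO_{\tilde X}(-\sum E_i)$ is always very ample---this is elementary and uniform in the configuration, since $A^{\,n+1}$ separates tangent directions at every point. Iterating gives the ampleness of $\mu^* A^{\,m} \otimes \shO_{\tilde X}(-(r+n)\sum E_i)$ for $m \geq (n+1)(r+n)$, independent of the points. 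This is presumably what the paper has in mind by ``straightforward.''
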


We now turn to the proof of Theorem~\ref{thm:MSp}.
\begin{proof}
It suffices to show that the sheaves $\shExt^i \bigl( \Gr_{n+1-k}^F \Mmod, \OP \bigr)$
are supported in codimension at least $i+m$, for all $i > 0$.  First, we treat the
case when $i \geq 2$, where we have
\[
	\shExt^i \bigl( \Gr_{n+1-k}^F \Mmod, \OP \bigr) \simeq
		R^{i-1} \pil \bigl( \psiu \OmX{n} \tensor \OfamY(k) \bigr)
\]
by \eqref{eq:dual2}. The support of this sheaf is therefore contained in the locus
$P_1$ where the singular set of the fibers has positive
dimension; by Proposition~\ref{prop:bound-sings}, we can make its codimension greater
than $i+m$ by taking $\OX(1)$ sufficiently ample.

What remains is the case $i = 1$. The exact sequence in \eqref{eq:dual1}
shows that $\shExt^1 \bigl( \Gr_{n+1-k} \Mmod, \OP \bigr)$ is the cokernel of the
restriction map
\[
	\Gr_k^F \Mmod \to \pil \bigl( \psiu \OmX{n} \tensor \OfamY(k) \bigr).
\]
To complete the proof, we have to show that this map is surjective except on a set of
codimension at least $m+1$. We may restrict our attention to the open subset $P
\setminus P_1$ where $\pi \colon \famY \to P$ has finite fibers; over that set, $\pi$
is a finite morphism, and so the stalk of $\pil \bigl( \psiu \OmX{n} \tensor
\OfamY(k) \bigr)$ at a point $p \in P \setminus P_1$ equals $H^0 \bigl( \famY_p,
\OmX{n}(k) \bigr)$. Since we know from Corollary~\ref{cor:glob-sect} that
$H^0 \bigl( X, \OmX{n}(k) \bigr) \tensor \OP(k) \to \Gr_k^F \Mmod$
is always surjective, it is therefore sufficient to prove that the map
\begin{equation} \label{eq:sing}
	H^0 \bigl( X, \OmX{n}(k) \bigr) \to H^0 \bigl( \famY_p, \OmX{n}(k) \bigr)
\end{equation}
is surjective, except when $p \in P \setminus P_1$ lies in a subset of codimension
$\geq m+1$.

Let $P_2 \subseteq P$ be the set of hyperplane sections $\famX_p$ that either have
more than $m$ singular points, or have a singular point of multiplicity greater
than $m+1$; by Proposition~\ref{prop:bound-sings}, the codimension of $P_2$ is at
least $m+1$. On the other hand, \eqref{eq:sing} is surjective for each $p \in P
\setminus P_2$; indeed, $\famY_p$ consists of at most $m$ points, each with
multiplicity no greater than $m+1$, and so we can apply Lemma~\ref{lem:jet-sep}. This
concludes the proof that $\Gr_k^F \Mmod$ satisfies Serre's condition $S_m$.
\end{proof}

\begin{corollary}
If $\OX(1)$ is sufficiently ample, then for each $k=1, \dotsc, n$, the sheaves
$\Gr_k^F \Mmod$ and $F_k \Mmod$ are reflexive.
\end{corollary}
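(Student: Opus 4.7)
The plan is to apply Theorem~\ref{thm:MSp} with $m = 2$ and then bootstrap from $\Gr_k^F \Mmod$ to $F_k \Mmod$ using the standard short exact sequences. Recall that on the smooth variety $P$, Serre's condition $S_2$ is equivalent to reflexivity; this is the characterization mentioned in the discussion preceding Theorem~\ref{thm:MSp}.

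First, by Theorem~\ref{thm:MSp} applied with $m = 2$, once $\OX(1)$ is sufficiently ample every $\Gr_k^F \Mmod$ with $1 \leq k \leq n$ satisfies $S_2$, hence is reflexive. This disposes of the statement about the graded pieces.

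For the sheaves $F_k \Mmod$ themselves, I proceed by induction on $k$. The base case is $k = 1$: since $F_0 \Mmod = 0$ by the definition of the filtration in \subsecref{subsec:Mmod}, we have $F_1 \Mmod = \Gr_1^F \Mmod$, which is already known to be reflexive. For the inductive step, consider the short exact sequence
\[
  0 \to F_{k-1} \Mmod \to F_k \Mmod \to \Gr_k^F \Mmod \to 0,
\]
with $2 \leq k \leq n$. Both $F_{k-1} \Mmod$ (by the inductive hypothesis) and $\Gr_k^F \Mmod$ (by Theorem~\ref{thm:MSp}) satisfy $S_2$. The depth lemma for short exact sequences of finitely generated modules over a local ring gives $\depth \bigl( F_k \Mmod \bigr)_p \geq \min \bigl( \depth \bigl( F_{k-1} \Mmod \bigr)_p, \depth \bigl( \Gr_k^F \Mmod \bigr)_p \bigr)$, so $F_k \Mmod$ also satisfies $S_2$, and is therefore reflexive.

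The only thing that requires any genuine input is the depth lemma, which is standard; the real work has all been done in the proof of Theorem~\ref{thm:MSp}. I expect no obstacle here beyond checking that the ``sufficiently ample'' hypothesis, chosen for $m = 2$ in Theorem~\ref{thm:MSp}, can be taken uniformly in the finite range $1 \leq k \leq n$, which is automatic since that theorem is stated for a fixed $m$ and applies to all $k$ in the range simultaneously.
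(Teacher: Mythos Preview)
Your proposal is correct and is exactly the argument the paper has in mind; the corollary is stated without proof there, as an immediate consequence of Theorem~\ref{thm:MSp} with $m=2$, and your induction via the depth lemma on the short exact sequences $0 \to F_{k-1}\Mmod \to F_k\Mmod \to \Gr_k^F\Mmod \to 0$ is the natural way to pass from the graded pieces to the filtered pieces.
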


\section*{References}
\begin{biblist}
\bib{BBD}{article}{
   author={Be{\u\i}linson, A. A.},
   author={Bernstein, J.},
   author={Deligne, Pierre},
   title={Faisceaux pervers},
   conference={
      title={Analysis and topology on singular spaces, I},
      address={Luminy},
      date={1981},
   },
   book={
      series={Ast\'erisque},
      volume={100},
   	publisher={Soci\'et\'e Math\'ematique de France},
      place={Paris},
   },
   date={1982},
   pages={5--171},
}
\bib{Borel}{book}{
   author={Borel, Armand},
   author={Grivel, Pierre-Paul},
   author={Kaup, B.},
   author={Haefliger, A.},
   author={Malgrange, B.},
   author={Ehlers, F.},
   title={Algebraic $D$-modules},
   series={Perspectives in Mathematics},
   volume={2},
   publisher={Academic Press Inc.},
   place={Boston, MA},
   date={1987},
}
\bib{BFNP}{article}{
	author={Brosnan, Patrick},
	author={Fang, Hao},
	author={Nie, Zhaohu},
	author={Pearlstein, Gregory},
	title={Singularities of admissible normal functions},
	eprint={arXiv:0711.0964},
	date={2007},
	note={To appear in \emph{Inventiones Mathematicae}},
}
\bib{dCM-singularities}{article}{
   author={de Cataldo, Mark Andrea A.},
   author={Migliorini, Luca},
	title={A remark on singularities of primitive cohomology classes},
	eprint={arXiv:0711.1307v1},
	date={2007},
}
\bib{DS}{article}{
	author={Dimca, Alexandru},
	author={Saito, Morihiko},
	title={Vanishing cycle sheaves of one-parameter smoothings and quasi-semistable degenerations},
	eprint={arXiv:0810.4896v2}
	date={2009},
}
\bib{GreenPeriod}{article}{
   author={Green, Mark L.},
   title={The period map for hypersurface sections of high degree of an
   arbitrary variety},
   journal={Compositio Mathematica},
   volume={55},
   date={1985},
   number={2},
   pages={135--156},
}
\bib{GG1}{article}{
   author={Green, Mark L.},
   author={Griffiths, Phillip},
   title={Algebraic cycles and singularities of normal functions},
   conference={
      title={Algebraic cycles and motives},
		address={Grenoble},
		date={2007},
   },
   book={
      series={London Mathematical Society Lecture Note Series},
      volume={343},
		publisher={Cambridge University Press},
   },
	date={2007},
   pages={206--263},
}
\bib{GG2}{article}{
   author={Green, Mark L.},
   author={Griffiths, Phillip},
   title={Algebraic cycles and singularities of normal functions. II},
   conference={
      title={Inspired by S.~S.~Chern},
   },
   book={
      series={Nankai Tracts in Mathematics},
      volume={11},
   },
   date={2006},
   pages={179--268},
}
\bib{GMV}{collection}{
   author={Green, Mark L.},
   author={Murre, Jacob},
   author={Voisin, Claire},
   title={Algebraic cycles and Hodge theory},
   series={Lecture Notes in Mathematics},
   volume={1594},
   publisher={Springer-Verlag},
   place={Berlin},
   date={1994},
}
\bib{RatInt}{article}{
   author={Griffiths, Phillip A.},
   title={On the periods of certain rational integrals. I, II},
   journal={Annals of Mathematics},
   volume={90},
   date={1969},
   pages={460--495 and 496--541},
}
\bib{GS}{article}{
   author={Griffiths, Phillip},
   author={Schmid, Wilfried},
   title={Recent developments in Hodge theory: a discussion of techniques and results},
   conference={
      title={Discrete subgroups of Lie groups and applications to moduli},
   },
   book={
      publisher={Oxford University Press},
      place={Bombay},
   },
   date={1975},
   pages={31--127},
}
\bib{Kawamata}{article}{
   author={Kawamata, Yujiro},
   title={Characterization of abelian varieties},
   journal={Compositio Mathematica},
   volume={43},
   date={1981},
   number={2},
   pages={253--276},
}
\bib{LazarsfeldI}{book}{
   author={Lazarsfeld, Robert},
   title={Positivity in algebraic geometry. I},
   series={Ergebnisse der Mathematik und ihrer Grenzgebiete. 3. Folge. A
   Series of Modern Surveys in Mathematics},
   volume={48},
   note={Classical setting: line bundles and linear series},
   publisher={Springer-Verlag},
   place={Berlin},
   date={2004},
}
\bib{LazarsfeldII}{book}{
   author={Lazarsfeld, Robert},
   title={Positivity in algebraic geometry. II},
   series={Ergebnisse der Mathematik und ihrer Grenzgebiete. 3. Folge. A
   Series of Modern Surveys in Mathematics},
   volume={49},
   note={Positivity for vector bundles, and multiplier ideals},
   publisher={Springer-Verlag},
   place={Berlin},
   date={2004},
}
\bib{Laumon}{article}{
   author={Laumon, G.},
   title={Transformations canoniques et sp\'ecialisation pour les ${\scr D}$-modules filtr\'es},
	conference={
		title={Differential systems and singularities},
		place={Luminy},
		date={1983},
	},
   journal={Ast\'erisque},
   number={130},
   date={1985},
   pages={56--129},
}
\bib{Popa}{article}{
	author={Popa, Mihnea},
	title={Generic vanishing filtrations and perverse objects in derived categories of coherent sheaves},
	eprint={arXiv:0911.3648},
	date={2009},
}
\bib{SaitoHM}{article}{
   author={Saito, Morihiko},
   title={Modules de Hodge polarisables},
	journal={Publications of the Research Institute for Mathematical Sciences},
   volume={24},
   date={1988},
   number={6},
   pages={849--995},
}
\bib{SaitoMHM}{article}{
   author={Saito, Morihiko},
   title={Mixed Hodge modules},
	journal={Publications of the Research Institute for Mathematical Sciences},
   volume={26},
   date={1990},
   number={2},
   pages={221--333},
}
\bib{SaitoB}{article}{
   author={Saito, Morihiko},
   title={On $b$-function, spectrum and rational singularity},
   journal={Mathematische Annalen},
   volume={295},
   date={1993},
   number={1},
   pages={51--74},
}
\bib{thesis}{thesis}{
  author={Schnell, Christian},
  title={The boundary behavior of cohomology classes and singularities of normal functions},
  type={Ph.~D.~thesis},
  organization={Ohio State University},
  date={2008},
  note={Available at \href{http://www.ohiolink.edu/etd/view.cgi?acc_num=osu1218036000}%
		{\texttt{OhioLink ETD}}},
}
\bib{mhmduality}{article}{
	author={Schnell, Christian},
	title={Local duality and polarized Hodge modules},
	eprint={arxiv:0904.3480},
	status={submitted},
	date={2009},
}
\bib{neron}{article}{
	author={Schnell, Christian},
	title={Complex analytic N\'eron models for arbitrary families of intermediate Jacobians},
	date={2009},
	eprint={arXiv:0910.0662v2},
	status={submitted},
}	
\bib{Voisin}{book}{
   author={Voisin, Claire},
   title={Th\'eorie de Hodge et g\'eom\'etrie alg\'ebrique complexe},
   series={Cours Sp\'ecialis\'es},
   volume={10},
   publisher={Soci\'et\'e Math\'ematique de France},
   place={Paris},
   date={2002},
}
\end{biblist}

\end{document}